\newcommand{\TitleWithUrl}[1]{\IfEmptyBibField{doi}%
  {\IfEmptyBibField{url}{\textit{#1}}%
    {\IfEmptyBibField{eprint}{\href {\BibField{url}}{\textit{#1}}}{\textit{#1}}}%
    }%
  {\href {https://doi.org/\BibField{doi}}{\textit{#1}}}}
\renewcommand{\eprint}[1]{\IfEmptyBibField{url}{\url{#1}}%
  {\href {\BibField{url}}{#1}}}
\newcommand{\Ortho}{\operatorname{O}} %-- Orthogonal group
\newcommand{\Hom}{\operatorname{Hom}} 
\newcommand{\quaddiff}{\mathcal{Q}} 
\newcommand{\confstr}{\mathcal{B}} 
\newcommand{\tr}{\textrm{tr}}
\numberwithin{equation}{section}
\theoremstyle{plain}
	\newtheorem{theorem}{Theorem}[section]
	\newtheorem{lemma}[theorem]{Lemma}
	\newtheorem{proposition}[theorem]{Proposition}
	\newtheorem{corollary}[theorem]{Corollary}
\theoremstyle{definition}
	\newtheorem{definition}[theorem]{Definition}
\theoremstyle{remark}
	\newtheorem{remark}[theorem]{Remark}
	\newtheorem{example}[theorem]{Example}
\title[Surfaces with spherical curvature lines]{Constrained elastic curves and surfaces with spherical curvature lines}
\author{Joseph Cho}
\address[J.~Cho]{Vienna University of Technology,
Wiedner Hauptstra\ss e 8-10/104, A-1040 Vienna. Austria}
\email{jcho@geometrie.tuwien.ac.at}
\author{Mason Pember}
\address[M.~Pember]{Dipartimento di Matematica, Politecnico di Torino,
Corso Duca degli Abruzzi 24, I-10129 Torino, Italy}
\email{mason.pember@polito.it}
\author{Gudrun Szewieczek}
\address[G.~Szewieczek]{Vienna University of Technology,
Wiedner Hauptstra\ss e 8-10/104, A-1040 Vienna. Austria}
\email{gudrun@geometrie.tuwien.ac.at}
\subjclass[2020]{Primary 53B25; Secondary 53A40, 53C12, 53E99}
\begin{document}

\begin{abstract}
In this paper we consider surfaces with one or two families of spherical curvature lines. We show that every surface with a family 
of spherical curvature lines can locally be generated by a pair of initial data: a suitable curve of Lie sphere transformations and a 
spherical Legendre curve. We then provide conditions on the initial data for which such a surface is Lie applicable, an integrable class of surfaces that includes cmc and pseudospherical surfaces.
In particular we show that a Lie applicable surface with exactly one family of spherical curvature lines must be generated by the lift of a constrained 
elastic curve in some space form. In view of this goal, we give a Lie sphere geometric characterisation of constrained elastic curves via 
polynomial conserved quantities of a certain family of connections.
\end{abstract}

\maketitle

%------------------------------------------------------------------------------------------------------------------------------------------------------------------------------------------------------------------
%		Introduction
%------------------------------------------------------------------------------------------------------------------------------------------------------------------------------------------------------------------

\section{Introduction}
\subsection{Background}
In 1691 Jacob Bernoulli posed the problem of finding the shape of an elastic rod; Daniel Bernoulli proposed that the shape taken can be modelled by curves $\gamma$, called \emph{elastic curves}, that are critical for the bending energy (also known as the total squared curvature) 
\begin{equation}
\label{eqn:bendingenergy}
\int_{\gamma} k^{2}ds,
\end{equation}
with respect to variations that fix length, where $s$ is the arc length parameter of $\gamma$, and Euler~\cite{E1744} gave a complete classification of these curves in the plane. Geometrically, such curves are characterised by the property that their curvature is proportional to the distance of the curve from a fixed line. These curves have since been generalised to Riemannian 2-dimensional space forms in~\cite{BG1986,LS1984i} and pseudo-Riemannian space forms (see for example~\cite{FGJL2006,J2004,P2018}). Moreover, these curves appear in various geometric contexts, such as in the construction of Willmore tori using the Hopf fibration in~\cite{P1985ii} and channel Willmore surfaces in~\cite{BG1986, H2003, LS1984ii, MN1999ii}. 

By considering the critical curves of~\eqref{eqn:bendingenergy} with respect to variations that fix length and enclosed area, we obtain a larger class of curves called \textit{constrained elastic curves}. These were studied in~\cite{BPP2008} and are used in~\cite{H2014} to create constrained Willmore tori via the Hopf fibration. The Euler-Lagrange equation for this variational problem is the stationary modified Korteweg-de-Vries (mKdV) equation. Thus constrained elastic curves are the stationary curves of the mKdV flow. In this paper we shall see that these curves appear in the study of Lie applicable surfaces with spherical curvature lines. 

On the other hand, our examination of surfaces with spherical curvature lines is rooted in the works of classical geometers in the 19th century and early 20th century.
Most of the work in the era focused on classifying surfaces with certain curvature restrictions which admit one family of planar or spherical curvature lines.
Bonnet considered minimal surfaces with planar curvature lines in \cite{B1855} (see also \cite{CO2017}).
It was Enneper who first looked into constant curvature surfaces with one family of spherical curvature lines in his work \cite{E1868}, pioneering the way for examining these surfaces via analytical methods.
Dobriner then used these methods to classify all pseudospherical surfaces and minimal surfaces with one family of spherical curvature lines in \cite{D1887i, D1887ii}.
The early developments of these analytical methods are well documented in the works of Enneper \cite{E1878}, Darboux \cite{D1889, D1896}, or Eisenhart \cite{E1909}, and these works had a profound impact in the modern surface theory scene.

A prototypical example of such modern development can be found in the study of constant mean curvature (cmc) surfaces in the Euclidean space.
A long standing problem of Hopf was to determine whether any cmc surface that is both compact and complete must necessarily be a sphere, and it was Wente in 1984 who was influenced by the aforementioned classical works to find a counterexample to this problem now known as the Wente torus in \cite{W1986}.
Abresch and Walter then independently noticed that the simplifying ansatz to facilitate finding such examples was precisely the geometric problem Enneper looked into: assuming that one family of curvature lines is planar.
Abresch in particular rediscovered the analytical methods of Enneper and applied them to finding cmc tori in \cite{A1987}; Walter noticed that the ansatz of one family of curvature lines being planar forces the other family of curvature lines to be spherical in \cite{W1987}, leading to an explicit parametrization of the cmc tori.
We note here that the classical geometers had already found these surfaces, but their compactness was not known until Wente's rediscovery.

The ansatz of Enneper, requiring that one family of curvature lines be spherical, has had pervasive modern influence, extending beyond the class of cmc surfaces in the Euclidean space.
Minimal tori in the $3$-sphere were discovered via the spherical curvature lines condition in \cite{Y1987}.
Isothermic tori were found by assuming that both families of curvature lines are spherical in \cite{B2001}.
Examples of linear Weingarten surfaces with spherical curvature lines were constructed in \cite{T2002, CFT2003} via Ribaucour transformations of the circular cylinder.
Surfaces with planar curvature lines were studied in the realm of Laguerre geometry in \cite{MN1999i}, and their applications to architecture have been explored in \cite{MDBL2018}.
Monge surfaces have one family of planar curvature lines, and their application was explored in \cite{BG2018}. Surfaces with two families of spherical curvature lines were shown to admit a $3$-parameter family of Lie deformations in~\cite{F2002}, and it is demonstrated how these surfaces relate to commuting Schr\"{o}dinger operators with magnetic fields. 
%Pseudospherical tori have been constructed using the methods of Abresch and Wente in \cite{ura_constant_2018}.

Many of the surfaces examined in the aforementioned works lie in the intersection of two larger classes of surfaces: surfaces with spherical curvature lines and Lie applicable surfaces \cite{F2002, MN2006}.
The class of Lie applicable surfaces is comprised of exactly the $\Omega$- and $\Omega_0$-surfaces, first discovered by Demoulin in \cite{D1911i, D1911ii, D1911iii}, and includes a wide range of well-studied surface classes: isothermic surfaces such as minimal and cmc surfaces, Guichard surfaces such as pseudospherical and non-cmc linear Weingarten surfaces, and $L$-isothermic surfaces such as minimal surfaces and linear Weingarten surfaces of Bryant type.
Thus, the examination of Lie applicable surfaces often leads to a uniform approach to considering these various surface classes, with their transformation theory being one of such examples \cite{BHPR2019}.

Lie applicable surfaces are Lie sphere invariant, allowing Lie sphere geometry to step in as the natural setting for examining these surfaces.
The comprehensive work of Blaschke \cite{B1929} paved the way for using the sphere geometries of M\"{o}bius, Laguerre, and Lie to study various surface classes. Blaschke derived invariants of surfaces in Lie sphere geometry that generically determine a surface up to Lie sphere transformations. The surfaces that are not determined are precisely the Lie applicable surfaces. Moreover, he showed how surfaces with a family of spherical curvature lines can be studied in this geometry via the use of sphere complexes, in particular showing that curves in this family are Lie sphere transformations of each other. 

The tools of Lie sphere geometry have also received multifarious modern interest, with a comprehensive introduction provided in \cite{C2008}. We now list some of such results that are particularly relevant to our work. Ribaucour transformations were given a Lie sphere geometric characterisation in \cite{BH2006}. Flat fronts in hyperbolic geometry and linear Weingarten surfaces in space forms were characterised via Lie sphere geometry in \cite{BHR2010, BHR2012}. Channel surfaces and their transformations were given a modern treatment in the realm of Lie sphere geometry in \cite{PS2018}. Lie applicable surfaces were shown to constitute an integrable system in~\cite{F2000ii} and their characterisation as the deformable surfaces of Lie sphere geometry was shown in~\cite{MN2006}. A gauge-theoretic characterisation of Lie applicable surfaces and their transformations was given in \cite{C2012i,P2020}, and the identification of subclasses of Lie applicable surfaces via polynomial conserved quantities was examined in \cite{BHPR2019}.

\subsection{Objectives}
Having reached an understanding of the historical and modern significance of surfaces with spherical curvature lines and the class of Lie applicable surfaces, we now state the aim of the paper.
We seek to provide a modern reinterpretation of Blaschke's methods of examining surfaces with spherical curvature lines, and use these methods to characterise all Lie applicable surfaces with spherical curvature lines.
In the pursuit of such a characterisation, we will see that constrained elastic curves play a central role.
Therefore, using Lie sphere geometry, we will:
\begin{itemize}
        \item characterise constrained elastic curves in Lie sphere geometry via polynomial conserved quantities of a certain $1$-parameter family of connections, 
	\item show that every surface with spherical curvature lines is generated by a certain curve of Lie sphere transformations applied to an initial spherical curve, and
	\item characterise Lie applicable surfaces with spherical curvature lines by restricting the curve of Lie sphere transformations and initial spherical curve, namely to a constrained elastic curve in an appropriate space form. 
\end{itemize}
Along the way we shall obtain additional results regarding surfaces with spherical curvature lines, for example showing that any such surface is a Ribaucour transform of a $3$-parameter family of channel surfaces. 

\subsection{Structure of the paper}
After a preparatory Section \ref{section2}, where we introduce the basics of Lie sphere geometry and the gauge-theoretic description of Lie applicable surfaces, we restrict our attention to the theory of curves in $2$-dimensional Riemannian and Lorentzian space forms in Section \ref{section3}. In Theorem~\ref{thm:elasticcomplex} we prove that constrained elastic curves can be characterised by a certain enveloped circle congruence belonging to a circle complex. A gauge-theoretic approach to curve theory was given in the realm of M\"{o}bius geometry in \cite{BHMR2016}, which we apply to our situation of Lie sphere geometry, culminating in the characterisation of constrained elastic curves in terms of polynomial conserved quantities in Theorem \ref{thm:constrained}, analogous to the approaches of~\cite{BS2012, BHPR2019}.

We then switch our attention to surfaces with spherical curvature lines in Section~\ref{section4}, where we mainly focus on reinterpreting Blaschke's work in modern context.
After characterising surfaces with spherical curvature lines in Lie sphere geometry in Theorem \ref{thm:Lsph}, we show that every surface with spherical curvature lines is obtained via a certain $1$-parameter family of Lie sphere transformations of a given spherical curve in Theorem \ref{thm:A}, which we call \emph{spherical evolution map} (see Definition \ref{def:sem}).
We also recover the \emph{osculating sphere complexes} of Blaschke, obtaining another Lie sphere geometric characterisation of surfaces with spherical curvature lines in Proposition \ref{prop:sphLi}.
Finally, we break symmetry to conformal geometry, Laguerre geometry, and Euclidean geometry to characterise surfaces with orthogonally intersecting spherical curvature lines, surfaces with planar curvature lines, and Monge surfaces, respectively, in Section \ref{subsec:sphsymbreak}. This then enables us to recover Blaschke's classification of surfaces with two families of spherical curvature lines. 

In Section \ref{section5}, we examine the relationship between surfaces with spherical curvature lines and Ribaucour transformations.
Blaschke showed that Ribaucour transformations of a channel surface must have one family of spherical curvature lines.
We consider the converse statement, and show that every surface with spherical curvature lines must be a Ribaucour transform of a $3$-parameter family of channel surfaces in Theorem \ref{thm:3paramchannel}.
We then provide an alternative geometric proof of this result using the spherical evolution map.

Finally, in Section \ref{sec:lieappsph}, we consider Lie applicable surfaces with spherical curvature lines. We obtain a characterisation of such surfaces in Theorem~\ref{thm:lieappsph}, showing that either both families of curvature lines are spherical or that the surface is obtained by a certain evolution of a constrained elastic curve. 
We then further examine the case that both families of curvature lines are spherical, using the curved flat formalism of~\cite{BP2020}. In so doing we show that Joachimsthal surfaces are characterised as the Darboux transforms of Dupin cyclides in Theorem~\ref{thm:joachimsthal}. Finally, we show that Calapso transforms of Lie applicable surfaces preserve the property of spherical curvature lines.

\subsection{Acknowledgements.} 
We would like to thank Shoichi Fujimori, Udo Hertrich-Jeromin, Emilio Musso, Yuta Ogata and Álvaro Pámpano for pleasant and fruitful conversations about the topics of this paper. 

We gratefully acknowledge the financial support of the following research grants: the JSPS/FWF Joint Project I3809-N32 ``Geometric shape generation''; the JSPS Grant-in-Aid for JSPS Fellows 19J10679; the FWF research project P28427-N35 ``Non-rigidity and symmetry breaking''; GNSAGA of INdAM and the MIUR grant ``Dipartimenti di Eccellenza'' 2018 - 2022, CUP: E11G18000350001, DISMA, Politecnico di Torino.%------------------------------------------------------------------------------------------------------------------------------------------------------------------------------------------------------------------
%		Sph curvature lines
%------------------------------------------------------------------------------------------------------------------------------------------------------------------------------------------------------------------

\section{Lie sphere geometry}\label{section2}

Let $n\in \mathbb{N}$ and let $\mathbb{R}^{n+2,2}$ denote the $(n+4)$-dimensional pseudo-Euclidean space equipped with a non-degenerate symmetric bilinear form $( , )$ of signature $(n+2,2)$. Denoting by $\mathcal{L}$ the light cone of $\mathbb{R}^{n+2,2}$, the Lie quadric $\mathbb{P}(\mathcal{L})$ parametrises oriented hyperspheres of $(n+1)$-dimensional space forms (see for example~\cite{C2008}). Two points in the Lie quadric lie on a line in $\mathbb{P}(\mathcal{L})$ if and only if the corresponding spheres are in oriented contact. Thereby, the Grassmannian of isotropic $2$-planes in $\mathbb{R}^{n+2,2}$, denoted by $\mathcal{Z}$, represents the manifold of contact elements.

In this setting, the orthogonal group $\Ortho(n+2,2)$ acts transitively on $\mathcal{L}$, and is a double cover for the group of Lie sphere transformations.
Throughout the paper, we identify the Lie algebra $\mathfrak{o}(n+2,2)$ with the exterior algebra $\wedge^2 \mathbb{R}^{n+2,2}$ via
	\[
		(a \wedge b)c = (a, c) b - (b, c) a
	\]
for $a,b,c \in \mathbb{R}^{n+2,2}$.

Let $\Sigma$ be a simply-connected $n$-dimensional manifold. A smooth map $f:\Sigma\to \mathcal{Z}$ can be considered as a rank 2 null subbundle of the trivial bundle $\underline{\mathbb{R}}^{n+2,2} := \Sigma\times \mathbb{R}^{n+2,2}$. We shall denote by $f^{(1)}$ the set of sections of $f$ and derivatives of sections of $f$. In Lie sphere geometry, one studies hypersurfaces by considering their contact lifts which in this setting amounts to the following definition:  

\begin{definition}
A map $f : \Sigma \to \mathcal{Z}$ is called a \emph{Legendre map} if $f^{(1)} \le f^\perp$ and, for any $x\in \Sigma$, if $X\in T_{x}\Sigma$ such that $d\sigma(X) \in \Gamma f$ for all $\sigma\in \Gamma f$ then $X = 0$.
\end{definition}

For any rank 2 null subbundle $f$ of $\underline{\mathbb{R}}^{n+2,2}$ we have that $f^{\perp}/f$ is a rank $n$ subbundle of $\underline{\mathbb{R}}^{n+2,2}/f$ that inherits a positive definite metric from $\mathbb{R}^{n+2,2}$. When $f$ is a Legendre map, we have that $f^{(1)} = f^{\perp}$. 

Suppose that $f : \Sigma \to \mathcal{Z}$ is a Legendre map. Fix a point $x\in \Sigma$. We say that a 1-dimensional subspace $s(x)\in f(x)$ is a \textit{curvature sphere of $f$ at $x$} if there exists a non-zero subspace $T_{s(x)}\le T_{x}\Sigma$ such that $d_{X}\sigma \in f(x)$ for any $X\in T_{s(x)}$ and $\sigma\in \Gamma f$ such that $\sigma(x)\in s(x)$. We call the maximal $T_{s(x)}$ the \textit{curvature space of $s(x)$}. 
We say that a point $x\in \Sigma$ is \textit{umbilic} if there exists exactly one curvature sphere $s(x)\le f(x)$, in which case $T_{s(x)} = T_{x}\Sigma$.

\subsection{Symmetry breaking}
\label{subsec:symbreak}
As seen in~\cite{BHPR2019,C2008} one can break the symmetry of Lie sphere geometry to obtain conformal geometry and space form geometries. We briefly recall this process in this subsection.

Fix $\mathfrak{p} \in \mathbb{R}^{n+2,2}$ with $\chi:= - (\mathfrak{p}, \mathfrak{p})= \pm 1$. If $\chi = 1$ then $\langle\mathfrak{p}\rangle^{\perp}\cong \mathbb{R}^{n+2,1}$ defines a Riemannian conformal geometry (see for example~\cite{H2003}), whereas if $\chi = -1$ then $\langle\mathfrak{p}\rangle^{\perp}\cong \mathbb{R}^{n+1,2}$ defines a Lorentzian conformal geometry. We call $\mathfrak{p}$ the \textit{point sphere complex} of the conformal geometry defined by $\langle\mathfrak{p}\rangle^{\perp}$. Lie sphere transformations $A\in \Ortho(n+2,2)$ satisfying $A\mathfrak{p} = \mathfrak{p}$ are then identified with conformal transformations. 

In the case that $\chi = 1$, the conformal $(n+1)$-sphere $S^{n+1}$ is identified with $\mathbb{P}(\mathcal{L}) \cap \langle\mathfrak{p}\rangle^\perp$. Let $\pi_{\mathfrak{p}}: \mathbb{R}^{n+2,2}\to \langle\mathfrak{p}\rangle^{\perp}$ be the projection onto $\langle\mathfrak{p}\rangle^{\perp}$. A point $s \in \mathbb{P}(\mathcal{L})\backslash \mathbb{P}(\mathcal{L}) \cap \langle\mathfrak{p}\rangle^\perp$ projects to a spacelike $1$-dimensional subbundle $S = \pi_{\mathfrak{p}} (s)$, and thus such points $s$ represent hyperspheres of this conformal geometry. Note that $\pi_{\mathfrak{p}}^{-1}(S) = s\oplus \langle\mathfrak{p}\rangle$ is a $2$-dimensional subspace of $\mathbb{R}^{n+2,2}$ with signature $(1,1)$. Thus there are two points in $\mathbb{P}(\mathcal{L})\backslash \mathbb{P}(\mathcal{L} )\cap \langle\mathfrak{p}\rangle^\perp$ that project to $S$, giving rise to a notion of orientation for hyperspheres (see~\cite{C2008}). 

We may break symmetry further by choosing a non-zero vector $\mathfrak{q}\in \langle\mathfrak{p}\rangle^{\perp}$ and defining 
\[ \mathfrak{Q}^{n+1} := \{y\in \mathcal{L}: (y,\mathfrak{q})=-1, (y,\mathfrak{p})=0\},\]
which is isometric to a space form with constant sectional curvature $\kappa = - (\mathfrak{q},\mathfrak{q})$. If $\chi =1$, then $\mathfrak{Q}^{n+1}$ is a Riemannian space form and if $\chi=-1$, then $\mathfrak{Q}^{n+1}$ is a Lorentzian space form. We call $\mathfrak{q}$ the \textit{space form vector} of $\mathfrak{Q}^{n+1}$. The quadric 
\[ \mathfrak{P}^{n+1}:= \{y\in \mathcal{L}: (y,\mathfrak{p})=-1, (y,\mathfrak{q})=0\}\]
is then identified with the space of hyperplanes (complete, totally geodesic hypersurfaces) in this space form. The Lie sphere transformations $A\in \Ortho(n+2,2)$ satisfying $A\mathfrak{p} = \mathfrak{p}$ and $A\mathfrak{q} = \mathfrak{q}$ are identified with the isometries of $\mathfrak{Q}^{n+1}$. 

Given a Legendre map $f:\Sigma\to \mathcal{Z}$ we call the projection to a space form $\mathfrak{f}:=f\cap\mathfrak{Q}^{n+1}$ the \textit{point sphere map} and $\mathfrak{t}:= f\cap \mathfrak{P}^{n+1}$ the \textit{tangent plane congruence} of $f$ with respect to $\mathfrak{p},\mathfrak{q}$. 

\begin{remark}
In~\cite{BHPR2019} it is shown how a non-zero lightlike vector $\mathfrak{q}\in \mathcal{L}$ defines a Laguerre subgeometry. One then has that the Lie sphere transformations $A\in \Ortho(n+2,2)$ for which $\mathfrak{q}$ is an eigenvector represent Laguerre transformations. 
\end{remark}

\subsection{Linear sphere complexes}
\label{subsec:linsphcom}

A central concept to this paper is the notion of linear sphere complexes developed in \cite{L1872} (see also \cite{B1929,H2003, RS2020}).

\begin{definition}
A 1-dimensional subspace $L\le \mathbb{R}^{n+2,2}$ defines a \textit{linear sphere complex} $E_{L}:= \mathbb{P}(\mathcal{L})\cap L^{\perp}$, a $(n+1)$-dimensional family of hyperspheres. If $L$ is spacelike then we say that $E_{L}$ is an \emph{elliptic} linear sphere complex. 
\end{definition}

Suppose that $L$ defines an elliptic linear sphere complex. After a choice of a timelike point sphere complex $\mathfrak{p}$, one has that $L\oplus\langle \mathfrak{p}\rangle$ is a $(1,1)$ plane. Thus there exist two hyperspheres $L^{\pm}\in \mathbb{P}(\mathcal{L})$ such that $L^{+}\oplus L^{-}= L\oplus\langle \mathfrak{p}\rangle$. These represent the same hypersphere $S = \pi_{\mathfrak{p}}(L)$ in the conformal geometry $\langle\mathfrak{p}\rangle^{\perp}$ with opposite orientations. The hyperspheres belonging to $E_{L}$ are then the hyperspheres intersecting $S$ transversally at a fixed non-zero angle. 

\begin{remark}
\label{rem:spcomplex}
In the case that $L\perp \mathfrak{p}$, one has that the hyperspheres belonging to $E_{L}$ intersect $S$ orthogonally. In the presence of a space form vector $\mathfrak{q}$, $L\perp \mathfrak{q}$ implies that the hypersphere defined by $S$ represents a hyperplane in $\mathfrak{Q}^{n+1}$. 
\end{remark}

\subsection{Lie applicable Legendre maps}

Demoulin~\cite{D1911i,D1911ii,D1911iii} discovered a class of surfaces belonging to Lie sphere geometry that constitute an integrable system. In~\cite{C2012i,P2020} these hypersurfaces were given a gauge theoretic interpretation and we shall recall that here. 

\begin{definition}
\label{def:lieapp}
A Legendre map $f:\Sigma\to\mathcal{Z}$ is \textit{Lie applicable} if there exists $\eta\in \Omega^{1}(f\wedge f^{\perp})$ satisfying $d\eta = [\eta\wedge \eta] = 0$ such that the quadratic differential $\quaddiff$ defined by 
\[ \quaddiff^{\eta}(X,Y) = \tr(f\to f: \sigma \mapsto \eta(X)d_{Y}\sigma)\]
is non-zero. We call such an $\eta$ a \textit{gauge potential} of $f$. 
\end{definition}

Gauge potentials are not unique: given a gauge potential $\eta$ one has that $\tilde{\eta}:= \eta - d\tau$ is a gauge potential for any $\tau \in \Gamma (\wedge^{2}f)$. We say that $\tilde{\eta}$ is \textit{gauge equivalent} to $\eta$ and we denote by $[\eta]$ the equivalence class of gauge potentials that are gauge equivalent to $\eta$, called the \textit{gauge orbit} of $\eta$. The quadratic differential $\quaddiff^{\eta}$ is invariant of gauge transformation, i.e., $\quaddiff^{\tilde{\eta}} = \quaddiff^{\eta}$. 

Given a Lie applicable surface $f$ with gauge potential $\eta$, we have that $\{d+t\eta\}_{t\in\mathbb{R}}$ is a 1-parameter family of flat metric connections. It then follows that there exist local orthogonal trivialising gauge transformations $T(t):\Sigma\to \Ortho(n+2,2)$, that is, 
\[ T(t)\cdot (d+t\eta) = d.\]
These gauge transformations give new Lie applicable Legendre maps via $f^{t}:= T(t) f$, which are called \textit{Calapso transforms of $f$}. 

Since $d+m\eta$ is a flat metric connection for any $m\in\mathbb{R}\backslash\{0\}$, there exist many parallel sections. Suppose that $\hat{s}$ is a parallel rank 1 null subbundle of $d+m\eta$ such that $\hat{s}$ is nowhere orthogonal to $f$. Let $\hat{f}:= \hat{s}\oplus s_{0}:\Sigma\to \mathcal{Z}$, where $s_{0}:= f\cap \hat{s}^{\perp}$. These $\hat{f}$ are Lie applicable Legendre maps, called \textit{$m$-Darboux transforms of $f$}. One also has that $f$ is an $m$-Darboux transform of $\hat{f}$. Thus, we refer to $(f,\hat{f})$ as an \textit{$m$-Darboux pair}.

\section{Curves in Lie sphere geometry}\label{section3}

In this section we shall consider the case that $n=1$. Thus $\mathbb{P}(\mathcal{L})$ parametrises circles in $2$-dimensional space forms such as the Euclidean $2$-plane. This was utilised by Blaschke~\cite[Chapter 5]{B1929} to study curves. See~\cite{BLPT2020} for a recent application of planar Lie sphere geometry.

Assume that $I:= \Sigma$ is an open interval. We then have that Legendre maps $C:I\to \mathcal{Z}$ project to fronts, that is, spacelike curves with admissible singularities in Riemannian or Lorentzian space forms (see~\cite{AGV1985}). We thus refer to Legendre maps in this setting as \textit{Legendre curves}. In this case the ``hyperspheres'' enveloped by $C$ are circles. At each point $x\in I$, there exists exactly one curvature sphere $c(x)$ which corresponds to the osculating circle. This gives rise to a rank 1 subbundle $c\le C$ satisfying $d\sigma\in \Omega^{1}(C)$ for any section $\sigma\in \Gamma c$. If at some point $x\in I$ we have that $d\sigma(T_{x}I)\le c(x)$, for any $\sigma \in \Gamma c$, then we say that $x$ is an \textit{inflection point} of $C$. If every point of $I$ is an inflection point then $c$ is constant and we say that $C$ is \textit{circular}. 

\begin{lemma}
\label{lem:circ}
$C\perp \mathfrak{q}$ for some non-zero vector $\mathfrak{q}\in \mathbb{R}^{3,2}$ if and only if $C$ is circular and $\mathfrak{q}\in \Gamma C$. 
\end{lemma}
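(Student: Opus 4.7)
The plan is to prove the two directions separately, with the reverse direction being essentially tautological and the forward direction hinging on the Legendre property $C^{(1)} = C^{\perp}$. For the reverse direction, if $\mathfrak{q} \in \Gamma C$ then, since $C$ is a rank-$2$ null subbundle of $\underline{\mathbb{R}}^{3,2}$, we have $C \le C^{\perp}$, whence $\mathfrak{q} \in \Gamma C^{\perp}$; this is exactly the statement $C \perp \mathfrak{q}$. Note that circularity of $C$ is not even used here.

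For the forward direction, I would first upgrade the hypothesis $\mathfrak{q} \in \Gamma C^{\perp}$ to $\mathfrak{q} \in \Gamma C$. The identity $(\mathfrak{q}, \sigma) \equiv 0$ for every $\sigma \in \Gamma C$, together with $d\mathfrak{q} = 0$, yields $(\mathfrak{q}, d\sigma) = 0$ after differentiation. Combined with the standing hypothesis this shows $\mathfrak{q} \perp C^{(1)}$, which coincides with $C^{\perp}$ by the Legendre property. Non-degeneracy of the ambient form then forces $\mathfrak{q} \in (C^{\perp})^{\perp} = C$.

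Finally, to conclude that $C$ is circular, I observe that $\mathfrak{q}$ is a nowhere-zero constant section of $C$ with $d\mathfrak{q} = 0$. A direct check against the definition of a curvature sphere, carried out using any local frame $\{\mathfrak{q},\rho\}$ of $C$ and noting that for every $\sigma \in \Gamma C$ with $\sigma(x) \in \langle \mathfrak{q}\rangle$ one has $d\sigma(x) \in C(x)$, shows that $\langle \mathfrak{q} \rangle$ realises a curvature sphere at every point. Invoking the uniqueness of the curvature sphere for Legendre curves, as recalled in the paragraph preceding the lemma, identifies $c = \langle \mathfrak{q} \rangle$; this is manifestly a constant rank-$1$ bundle, so $C$ is circular by definition.

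No step presents a genuine obstacle; the only point requiring a little care is the identification $c = \langle \mathfrak{q} \rangle$, which rests on the uniqueness of the osculating circle of a Legendre curve, and the elementary but crucial use of $C^{(1)} = C^{\perp}$ in passing from $\mathfrak{q} \perp C$ to $\mathfrak{q} \in C$.
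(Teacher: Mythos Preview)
Your proof is correct and follows essentially the same route as the paper: the key step in both is the chain $\mathfrak{q}\perp C \Rightarrow \mathfrak{q}\perp C^{(1)}=C^{\perp}\Rightarrow \mathfrak{q}\in (C^{\perp})^{\perp}=C$. The only difference is one of emphasis: the paper stops at $\mathfrak{q}\in\Gamma C$ and leaves the circularity conclusion implicit, whereas you spell out that $\langle\mathfrak{q}\rangle$ satisfies the curvature-sphere condition and invoke uniqueness to identify $c=\langle\mathfrak{q}\rangle$; conversely, for the reverse implication the paper starts from circularity and \emph{produces} a constant $\mathfrak{q}\in\Gamma c$, while you take $\mathfrak{q}\in\Gamma C$ as given and observe that nullity of $C$ suffices.
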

\begin{proof}
If $C$ is circular then $c\le C$ is constant and we may choose a constant $\mathfrak{q}\in \Gamma c$. 

Conversely, if $C\perp \mathfrak{q}$ for some constant $\mathfrak{q}$, then it follows that $C^{(1)}\perp \mathfrak{q}$. Since $C$ is Legendre one has that $C^{(1)} = C^{\perp}$. On the other hand $(C^{\perp})^{\perp}=C$. Thus $\mathfrak{q}\in \Gamma C$. 
\end{proof}

\begin{remark}
Notice that by Lemma~\ref{lem:circ} we have that $C\perp \mathfrak{q}$ necessarily implies that $\mathfrak{q}$ is lightlike. If $\mathfrak{q}$ is timelike, then $C(x)\not\perp\mathfrak{q}$ for any $x\in I$, whereas if $\mathfrak{q}$ is spacelike, then the set of points $x\in I$ where $C(x)\perp \mathfrak{q}$ is discrete.  
\end{remark}

\begin{lemma}
\label{lem:W0s}
Let $W_{0}$ be a constant $3$-dimensional subspace of $\mathbb{R}^{3,2}$ and suppose that $c_{0}:=C\cap W_{0}$ is a rank 1 subbundle of $C$. Then $C$ is circular. 
\end{lemma}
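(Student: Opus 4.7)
The strategy is to exhibit a constant $1$-dimensional subspace of $\mathbb{R}^{3,2}$ sitting inside $C$ as a subbundle and then invoke Lemma \ref{lem:circ}. Take a nowhere-vanishing local section $\sigma$ of $c_0$. Since $c_0 \le W_0$ with $W_0$ constant, $d\sigma$ takes values in $W_0$, and the Legendre property $C^{(1)} = C^\perp$ gives $d\sigma \in C^\perp$; therefore $d\sigma$ is a section of $W_0 \cap C^\perp$. A standard dimension count yields $\dim(W_0 \cap C^\perp) = 1 + \dim(W_0^\perp \cap C)$.

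The argument then splits on $\dim(W_0^\perp \cap C)$. On the open set where $W_0^\perp \cap C = 0$, the formula above forces $W_0 \cap C^\perp = c_0$, so $d\sigma \in c_0$, and $c_0$ spans a locally constant line inside $C$. On the complementary closed set, if $W_0^\perp \cap C$ had rank $2$ on any open subset, it would equal $C$ there, giving $W_0 \subseteq C^\perp$ and hence $W_0 = C^\perp$ by dimension, making $C$ constant and violating the Legendre property. So on a dense open set $\ell := W_0^\perp \cap C$ is a rank $1$ subbundle of $C$. Applying the same argument to $\tau \in \Gamma \ell$: since $\tau$ takes values in the constant $W_0^\perp$ and $d\tau \in C^\perp$ by Legendre, $d\tau$ lies in $W_0^\perp \cap C^\perp$; the dimension count again forces this to be $1$-dimensional and equal to $\ell$, whence $d\tau \in \ell$ and $\ell$ is locally constant.

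In each case we obtain a locally constant rank-$1$ subbundle of $C$, which must coincide with the intrinsic curvature sphere $c$. Since $c$ is a smooth rank-$1$ subbundle of $C$ on the connected interval $I$ and is locally constant on a dense open subset, continuity promotes it to a globally constant subbundle, and Lemma \ref{lem:circ} yields that $C$ is circular. The main obstacle I anticipate is this gluing step, since the two cases can produce different locally constant directions on disjoint open subsets; closing the argument requires using the smoothness of $c$ and the connectedness of $I$ to ensure the constant values agree across boundary points.
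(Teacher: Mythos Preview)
Your argument is correct and follows essentially the same route as the paper: you split on the dimension of $W_0^\perp\cap C$, which by your formula $\dim(W_0\cap C^\perp)=1+\dim(W_0^\perp\cap C)$ is equivalent to the paper's split on $\dim(W_0\cap C^\perp)$, and in each case you find a parallel line in $C$ just as the paper does. Your gluing concern is legitimate but your resolution is sound---since the curvature sphere $c$ is a smooth subbundle, $dc$ is continuous, and vanishing of $dc$ on a dense open set forces $dc\equiv 0$; the paper simply omits this point. One simplification you could note: the hypothesis that $c_0=C\cap W_0$ has rank~$1$ already excludes $\dim(W_0^\perp\cap C)=2$ \emph{pointwise}, since that would give $W_0=C^\perp$ and hence $c_0=C\cap C^\perp=C$, so only your Cases~1 and~3 actually occur.
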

\begin{proof}
If $C^{\perp}\cap W_{0}$ is rank $1$ then, since $C^{(1)}=C^{\perp}$, it follows that $c_{0}$ is constant and thus $C$ is circular. If $C^{\perp} = W_{0}$ then $C$ fails to be a Legendre curve. If $C^{\perp}\cap W_{0}$ is rank $2$ then $\tilde{c}_{0}:= C\cap W_{0}^{\perp}$ has rank $1$ and it follows that $\tilde{c}_{0}$ is constant. Hence, $C$ is circular.   
\end{proof}

\subsection{Polarisations and polynomial conserved quantities}

Since $TI$ is rank $1$, any $\xi\in \Omega^{1}(C\wedge C^{\perp})$ satisfies $d\xi = [\xi\wedge \xi]=0$. Therefore any $\xi\in \Omega^{1}(C\wedge C^{\perp})$ with non-zero quadratic differential\footnote{Note that, unlike the $n=2$ case, $\quaddiff^{\xi}$ does not determine $[\xi]$.} $\quaddiff^{\xi}$ is a gauge potential. In analogy to~\cite{BHMR2016} we refer to gauge orbits $[\xi]$ in this setting as \textit{polarisations of $C$}. 

\begin{example}
Let $\mathfrak{p}, \mathfrak{q}$ be a point sphere complex and space form vector for a space form $\mathfrak{Q}^{2}$ and let $\mathfrak{f} = C\cap \mathfrak{Q}^{2}$ denote the corresponding point sphere map. We then call the gauge orbit $[\xi]$ with $\xi := - \mathfrak{f}\wedge d\mathfrak{f}$ the \textit{arclength polarisation of $C$ with respect to $\mathfrak{p}, \mathfrak{q}$}. The corresponding quadratic differential is given by $\quaddiff^{\xi}= (d\mathfrak{f},d\mathfrak{f})$. 
\end{example}

In analog to~\cite{BS2012}, we have the following definition: 
\begin{definition}
We say that $p(t) = p_{0} + tp_{1} + ... + t^{d}p_{d}\in (\Gamma \underline{\mathbb{R}}^{3,2})[t]$ is called \textit{a polynomial conserved quantity of $d+t\xi$ of degree $d$} if $(d+t\xi)p(t) = 0$ for all $t\in\mathbb{R}$ and $p_{d}\in \Gamma C$.
\end{definition}

If $\tilde{\xi} = \xi - d\tau \in [\xi]$, then $d+t\tilde{\xi} = \exp(t\tau)\cdot (d+t\xi)$. Thus if $p(t)$ is a a polynomial conserved quantity of $d+t\xi$ of degree $d$, then $\tilde{p}(t) := \exp(t\tau)p(t)$ is a polynomial conserved quantity of $d+t\tilde{\xi}$ of degree $d$. Note that without the requirement that $p_{d}\in \Gamma C$, we would have that the degree of polynomial conserved quantities is not invariant under gauge transformation. 

Evaluation of the coefficients of the polynomial $(d+t\xi)p(t)=0$ yields the following lemma, analogous to~\cite[Proposition 2.2]{BS2012}: 

\begin{lemma}
$p(t) = p_{0} + tp_{1} + ... + t^{d}p_{d}$ is a polynomial conserved quantity of $d+t\xi$ of degree $d$ if and only if \begin{itemize}
\item $p_{0}$ is constant, 
\item $dp_{i}+\xi p_{i-1}=0$ for all $i\in \{1,...,d\}$, and 
\item $p_{d}\in \Gamma C$. 
\end{itemize}
\end{lemma}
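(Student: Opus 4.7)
The plan is to expand the polynomial identity $(d+t\xi)p(t) = 0$ in powers of $t$ and match coefficients. First I would compute
\[
(d+t\xi)p(t) \;=\; dp_0 \;+\; \sum_{i=1}^{d} \bigl(dp_i + \xi p_{i-1}\bigr)\, t^{i} \;+\; (\xi p_d)\, t^{d+1},
\]
so that, since the identity is to hold for all $t \in \mathbb{R}$, it is equivalent to the simultaneous vanishing of each coefficient: $dp_0 = 0$, $dp_i + \xi p_{i-1} = 0$ for $i = 1,\dots,d$, and $\xi p_d = 0$.

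The only step requiring genuine thought is to reconcile the top-order condition $\xi p_d = 0$ with the requirement $p_d \in \Gamma C$ baked into the definition of a polynomial conserved quantity of degree $d$. I would observe that $C$ is a null subbundle, hence $C \le C^\perp$, and use the identification $(a \wedge b)c = (a,c)b - (b,c)a$: for $a \in \Gamma C$, $b \in \Gamma C^\perp$ and $c \in \Gamma C$ one has $(a,c) = 0$ (as $C$ is isotropic, so $C \le C^\perp$) and $(b,c)=0$ (as $b \perp C$), whence $(a\wedge b)c = 0$. Since $\xi$ takes values in $C \wedge C^\perp$, this shows that $p_d \in \Gamma C$ automatically implies $\xi p_d = 0$.

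Putting the pieces together, the forward direction is immediate: if $(d+t\xi)p(t) = 0$ for all $t$, then matching coefficients of $t^0,\dots,t^d$ in the expansion above yields the first two listed conditions, while $p_d \in \Gamma C$ is part of the definition of a polynomial conserved quantity of degree $d$. For the converse, if the three listed conditions hold, then the coefficients of $t^0,\dots,t^d$ vanish by the first two, while the coefficient of $t^{d+1}$ vanishes by the null-subbundle observation applied to the third. I do not expect any real obstacle here; this is a direct coefficient-matching argument, and the only conceptual point is the compatibility between the degree-$d$ hypothesis $p_d \in \Gamma C$ and the top-order coefficient produced by the expansion.
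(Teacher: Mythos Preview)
Your proposal is correct and follows exactly the approach the paper indicates: the paper does not give a detailed proof but simply states that the lemma follows by ``evaluation of the coefficients of the polynomial $(d+t\xi)p(t)=0$'', which is precisely your coefficient-matching argument. Your additional observation that $p_d \in \Gamma C$ forces $\xi p_d = 0$ (since $\xi$ takes values in $C \wedge C^\perp$ and $C$ is isotropic) is the one detail the paper leaves implicit, and you have handled it correctly.
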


Clearly constant linear combinations of polynomial conserved quantities of degree $d$ are polynomial conserved quantities of degree $d$. Hence the space of polynomial conserved quantities of $d+t\xi$ of degree $d$ forms a vector space over $\mathbb{R}$. In this paper we will focus on spaces of linear conserved quantities. 

\begin{lemma}
\label{lem:lcqspan}
Suppose that $p$ and $\tilde{p}$ are linear conserved quantities of $C$ with $p(0) = \tilde{p}(0)$. Then either $p = \tilde{p}$ or $C$ is circular.
\end{lemma}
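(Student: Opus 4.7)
The plan is to work with the difference $p - \tilde{p}$ and show that it either vanishes identically or produces a non-zero constant section of $C$, to which Lemma~\ref{lem:circ} applies.

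Writing $p(t) = p_{0} + tp_{1}$ and $\tilde{p}(t) = \tilde{p}_{0} + t\tilde{p}_{1}$, the hypothesis $p(0) = \tilde{p}(0)$ gives $p_{0} = \tilde{p}_{0}$, so $(p - \tilde{p})(t) = t(p_{1} - \tilde{p}_{1})$. By the characterisation lemma for polynomial conserved quantities, both coefficient relations $dp_{1} + \xi p_{0} = 0$ and $d\tilde{p}_{1} + \xi \tilde{p}_{0} = 0$ hold, and $p_{1}, \tilde{p}_{1} \in \Gamma C$. Subtracting and using $p_{0} = \tilde{p}_{0}$ yields $d(p_{1} - \tilde{p}_{1}) = 0$, so $\mathfrak{q} := p_{1} - \tilde{p}_{1}$ is a constant vector in $\Gamma C$.

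If $\mathfrak{q} = 0$ then $p = \tilde{p}$ and we are done. Otherwise $\mathfrak{q}$ is a non-zero constant vector lying in $\Gamma C$. Since $C$ takes values in $\mathcal{Z}$, the bundle $C$ is null, i.e.\ $C \le C^{\perp}$, and therefore $C \perp \mathfrak{q}$. Lemma~\ref{lem:circ} then forces $C$ to be circular, which gives the second alternative.

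The argument is essentially algebraic and I do not anticipate a real obstacle; the only point that needs a moment's care is the justification that $p_{1} - \tilde{p}_{1}$ lies in $\Gamma C$ (immediate from $p_{1}, \tilde{p}_{1} \in \Gamma C$) and that this constant vector is then orthogonal to all of $C$, which uses the null (isotropic) nature of $C$ coming from $C:I\to\mathcal{Z}$. Once these are noted, the invocation of Lemma~\ref{lem:circ} closes the proof.
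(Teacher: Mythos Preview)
Your proof is correct and follows essentially the same route as the paper: form the difference $p-\tilde{p}=t(p_{1}-\tilde{p}_{1})$, observe that $p_{1}-\tilde{p}_{1}$ is a constant section of $C$, and conclude that $C$ is circular. The only cosmetic difference is that you close the argument by invoking Lemma~\ref{lem:circ} via $C\perp\mathfrak{q}$, whereas the paper simply notes that a non-zero constant section of $C$ forces $C$ to be circular.
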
 
\begin{proof}
We may write $p = \mathfrak{p} + t \sigma$ and $\tilde{p} = \mathfrak{p} + t\tilde{\sigma}$ for some constant $\mathfrak{p}\in \mathbb{R}^{3,2}$ and sections $\sigma, \tilde{\sigma}\in \Gamma C$. If $\sigma = \tilde{\sigma}$ then $p = \tilde{p}$. Otherwise we have that $p - \tilde{p} = t(\sigma - \tilde{\sigma})$ is a conserved quantity of $d+t\xi$. This then implies that $d(\sigma - \tilde{\sigma}) = 0$. Hence $\sigma - \tilde{\sigma}\in \Gamma C$ is constant, implying that $C$ is circular. 
\end{proof}

The next lemma shows that for any curve $C$, we can always find polarisations $[\xi]$ such that $d+t\xi$ admits $2$-dimensional spaces of linear conserved quantities. 

\begin{lemma}
\label{lem:2dimlcq}
Suppose that $W_{0}\le \mathbb{R}^{4,2}$ is a $2$-dimensional subspace with $W_{0}\cap C^{\perp}=\{0\}$. Then there exists a polarisation $[\xi]$ of $C$ such that the space $W$ of linear conserved quantities of $d+t\xi$ satisfies $W_{0}\le W(0)$. 
\end{lemma}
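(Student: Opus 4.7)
The plan is to construct the required $\xi$ explicitly in a frame of $\mathbb{R}^{3,2}$ adapted to $W_0$, reduce the existence of the desired linear conserved quantities to a linear ODE system on $I$, and then use the leftover freedom to ensure $\quaddiff^{\xi}\neq 0$.

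First I will exploit the hypothesis $W_0 \cap C^\perp = \{0\}$ and the dimension count $\dim W_0 + \dim C^\perp = 5$ to obtain $\mathbb{R}^{3,2} = W_0 \oplus C^\perp$, along with the dual statement $W_0^\perp \cap C = \{0\}$. In particular, the pairing $C \times W_0 \to \mathbb{R}$ induced by $(\,,\,)$ is pointwise non-degenerate, $\dim(W_0^\perp \cap C^\perp) = 1$, and this line is disjoint from $C$. Fixing a basis $\mathfrak{p}_1, \mathfrak{p}_2$ of $W_0$, I pick smooth dual sections $\sigma_1, \sigma_2 \in \Gamma C$ satisfying $(\sigma_i, \mathfrak{p}_j) = \delta_{ij}$, and a non-vanishing section $\nu$ of $W_0^\perp \cap C^\perp$; then $\{\sigma_1, \sigma_2, \nu\}$ frames $C^\perp$, and the fact that the bilinear form descends to a non-degenerate form on $C^\perp/C$ forces $(\nu, \nu) \neq 0$.

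Next I parametrise
\[
\xi = \omega_0\, \sigma_1 \wedge \sigma_2 + \omega_1\, \sigma_1 \wedge \nu + \omega_2\, \sigma_2 \wedge \nu
\]
for 1-forms $\omega_0, \omega_1, \omega_2$ on $I$ to be determined. Using $(\sigma_i, \mathfrak{p}_j) = \delta_{ij}$ and $(\nu, \mathfrak{p}_j) = 0$, a direct computation yields $\xi \mathfrak{p}_1 = \omega_0 \sigma_2 + \omega_1 \nu$ and $\xi \mathfrak{p}_2 = -\omega_0 \sigma_1 + \omega_2 \nu$. By the characterisation of linear conserved quantities established immediately before the lemma, $\mathfrak{p}_i \in W(0)$ is equivalent to finding $\tau_i \in \Gamma C$ with $d\tau_i + \xi\mathfrak{p}_i = 0$. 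Writing $\tau_i = u_i \sigma_1 + v_i \sigma_2$ and expanding $d\sigma_j = A_{j1}\sigma_1 + A_{j2}\sigma_2 + B_j \nu$ in the frame, the $\nu$-components pin down $\omega_1, \omega_2$ algebraically in terms of the $u_i, v_i$, while the $\sigma_1, \sigma_2$-components (after eliminating $\omega_0$) reduce to a system of three first-order linear ODEs in the four scalar functions $u_1, v_1, u_2, v_2$. Standard linear ODE theory on the interval $I$ yields local solutions with one functional degree of freedom.

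The main obstacle I anticipate is arranging $\quaddiff^{\xi} \neq 0$, so that $[\xi]$ is an actual polarisation. Since $\xi$ takes values in $C \wedge C^\perp$ and $C$ is isotropic, $\xi$ annihilates $C$, and a short computation in the frame yields
\[
\quaddiff^{\xi}(X, Y) = -(\nu, \nu)\bigl(B_1(Y)\omega_1(X) + B_2(Y)\omega_2(X)\bigr).
\]
The non-degeneracy clause of the Legendre condition prohibits $B_1(x) = B_2(x) = 0$ at any $x \in I$, since otherwise every tangent vector at $x$ would send every section of $C$ into $C$, contradicting the Legendre definition. Combined with $(\nu, \nu) \neq 0$, this allows me to select, from the family of ODE solutions, ones for which $\quaddiff^{\xi} \not\equiv 0$; any such $\xi$ defines a polarisation $[\xi]$ whose space of linear conserved quantities contains the pairs $(\mathfrak{p}_i, \tau_i)$, and hence $W_0 \leq W(0)$ by linearity.
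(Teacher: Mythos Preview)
Your argument is correct, but it is considerably more elaborate than the paper's. The paper avoids ODEs altogether: it chooses the frame of $C$ ``anti-diagonally'' relative to a basis $\mathfrak{q},\tilde{\mathfrak{q}}$ of $W_0$, namely $\sigma,\tilde\sigma\in\Gamma C$ with $(\sigma,\mathfrak{q})=(\tilde\sigma,\tilde{\mathfrak{q}})=0$ and $(\tilde\sigma,\mathfrak{q})=(\sigma,\tilde{\mathfrak{q}})=-1$, and then simply \emph{sets} $\xi:=-\sigma\wedge d\sigma-\tilde\sigma\wedge d\tilde\sigma$. The linear conserved quantities $\mathfrak{q}+t\tilde\sigma$ and $\tilde{\mathfrak{q}}+t\sigma$ are then verified by a one-line computation, and $\quaddiff^\xi=(d\sigma,d\sigma)+(d\tilde\sigma,d\tilde\sigma)$ is automatically nowhere zero by the Legendre condition---no selection argument is needed. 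In your language this is the particular solution $u_1=v_2=1$, $u_2=v_1=0$, giving $\omega_1=-B_1$, $\omega_2=-B_2$, so your quadratic differential becomes $(\nu,\nu)(B_1^2+B_2^2)>0$ pointwise. Your systematic parametrise-and-solve approach does establish the result, and it has the virtue of describing \emph{all} $\xi$'s that work rather than a single one; but the final step---``the freedom allows me to select solutions with $\quaddiff^\xi\not\equiv 0$''---while justifiable via initial conditions at a point, is left somewhat implicit, whereas the paper's explicit choice makes non-vanishing immediate.
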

\begin{proof}
Fix $\mathfrak{q},\tilde{\mathfrak{q}}\in W_{0}$. Then, by the assumption $W_{0}\cap C^{\perp}=\{0\}$, we can find sections $\sigma, \tilde{\sigma}\in \Gamma C$ such that 
\[ (\sigma,\mathfrak{q}) = (\tilde{\sigma},\tilde{\mathfrak{q}}) = 0 \quad \text{and}\quad (\tilde{\sigma},\mathfrak{q})= (\sigma,\tilde{\mathfrak{q}}) = -1 .\]
Now define $\xi :=- \sigma\wedge d\sigma - \tilde{\sigma}\wedge d\tilde{\sigma}$. Then $\quaddiff^{\xi} = (d\sigma,d\sigma) + (d\tilde{\sigma},d\tilde{\sigma}) \neq 0$, as otherwise $C$ would fail to be a Legendre curve. Hence $[\xi]$ is a polarisation of $C$. Moreover, 
\[ q(t):= \mathfrak{q} +t \tilde{\sigma} \quad \text{and}\quad \tilde{q}(t):= \tilde{\mathfrak{q}} + t\sigma\]
are linear conserved quantities of $d+t\xi$ satisfying $q(0) = \mathfrak{q}$ and $\tilde{q}(0)= \tilde{\mathfrak{q}}$. Hence $W:= \langle q, \tilde{q}\rangle$ satisfies $W_{0}\le W(0)$.  
\end{proof}

Let $W$ be a vector space of linear conserved quantities of $d+t\xi$. Since $d+t\xi$ is a flat metric connection, we have that the coefficients of the polynomial $(p(t),q(t))$ are constant for any $p,q\in W$. We may now equip $W$ with a pencil of metrics $\{(,)_{t}\}_{t\in \mathbb{R}\cup \{\infty\}}$ where 
\[ ( p,q)_{t} :=(p(t),q(t))\]
for $t\in \mathbb{R}$ and $(,)_{\infty} := \lim_{t\to \infty} \frac{1}{t}(,)_{t}$. Writing $p = p_{0} + tp_{1}$ and $q =q_{0}+tq_{1}$ we have that 
\[ (p,q)_{\infty} = (p_{0},q_{1}) + (q_{0},p_{1}).\] 
Note that this pencil of metrics is invariant under gauge transformation.

\begin{lemma}
\label{lem:infmet}
Suppose that $C$ is non-circular and that $W$ is a $2$-dimensional space of linear conserved quantities of $d+t\xi$. Then $(,)_{\infty}\neq 0$. 
\end{lemma}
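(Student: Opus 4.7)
The plan is to argue by contradiction: assume $(,)_\infty \equiv 0$ on $W$. Fix a basis $\{p, q\}$ of $W$ with $p = p_0 + t p_1$ and $q = q_0 + t q_1$. Since $p_1, q_1 \in \Gamma C$ and $C$ is null, the identity $(p_1, q_1) \equiv 0$ holds automatically, so the (constant-in-$t$) polynomial $(p(t), q(t))$ reduces under our assumption to $(p_0, q_0)$. Equating the coefficient of $t$ yields the pointwise relations
\begin{equation*}
(p_0, p_1(x)) = 0 \quad\text{and}\quad (p_0, q_1(x)) + (q_0, p_1(x)) = 0
\end{equation*}
for all $p, q \in W$ and $x \in I$.

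Set $V := W(0) = \langle p_0, q_0\rangle$, a constant $2$-dimensional subspace of $\mathbb{R}^{3,2}$, and consider the subbundle $c_0 := V^\perp \cap C$ of $C$. Three cases arise according to the rank of $c_0$. If $c_0$ has rank $1$ on an open subset of $I$, then Lemma~\ref{lem:W0s} applied with the constant $3$-dimensional subspace $W_0 = V^\perp$ forces $C$ to be circular, contradicting our hypothesis. If $c_0 = C$ at some point, then $V \subset C^\perp$ there, and any nonzero $v \in V$ satisfies $C \perp v$, so Lemma~\ref{lem:circ} again yields circularity. We may therefore assume $c_0 \equiv 0$, which is equivalent to the direct sum decomposition $V \oplus C^\perp = \mathbb{R}^{3,2}$ holding pointwise; in particular $V \cap C = 0$.

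In this remaining case, $\xi|_{\partial_x} \in \wedge^2 C^\perp(x)$ is a decomposable $2$-form $a(x) \wedge b(x)$ with $a, b \in C^\perp(x)$ (since $\dim \wedge^2 C^\perp = 3$), whose kernel has dimension $3$ and contains $C(x)$ whenever $\xi_{\partial_x} \neq 0$. If $p_1 \equiv q_1 \equiv 0$, then the derivation relation $\xi(p_0) = -dp_1 = 0$ (and similarly for $q_0$) gives $V \subseteq \ker \xi_{\partial_x}$; combined with $C \subseteq \ker \xi_{\partial_x}$ and $V \cap C = 0$, this forces $\dim \ker \xi_{\partial_x} \geq 4$, hence $\xi \equiv 0$, contradicting the polarisation property $\quaddiff^\xi \not\equiv 0$. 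Otherwise, the pointwise relations combined with the nondegenerate pairing $V \cong C^*$ (valid because $V \cap C^\perp = 0$) produce $p_1 = \alpha \tilde{e}_p$ and $q_1 = -(\alpha\delta/\gamma)\tilde{e}_q$ for a scalar $\alpha$ and distinguished sections $\tilde{e}_p, \tilde{e}_q \in \Gamma C$ characterised by the duality; substituting into $dp_1 + \xi p_0 = 0 = dq_1 + \xi q_0$ and using $\xi = a \wedge b$ yields a constant $s \in \mathbb{R}$ with $\xi(p_0 + sq_0) \equiv 0$, so that $p_1 + sq_1 \in \Gamma C$ is a nonzero constant section and Lemma~\ref{lem:circ} forces $C$ to be circular. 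The \textbf{main obstacle} is this final frame computation, which requires careful management of the moving frame of $C$ and the decomposition of $\xi$ in order to extract the constant $s$.
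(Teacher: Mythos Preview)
Your proof has a genuine gap in the final sub-case. You acknowledge the ``main obstacle'' is the frame computation that should produce a constant $s$ with $\xi(p_0 + sq_0)\equiv 0$, but you do not carry it out: the symbols $\gamma,\delta$ are never defined, the distinguished sections $\tilde e_p,\tilde e_q$ are only alluded to, and the assertion that a \emph{constant} $s$ drops out is unsupported. In fact the target is misdirected: you are trying to manufacture a constant section of $C$ and hence deduce circularity, but there is no reason to expect that $p_1' + s q_1'$ vanishes identically for a fixed $s$ just from the decomposability of $\xi(\partial_x)$ and the relations $dp_1 + \xi p_0 = dq_1 + \xi q_0 = 0$. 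The natural contradiction in this case is with the polarisation hypothesis $\quaddiff^\xi \neq 0$, not with non-circularity.

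The paper's argument is much shorter and bypasses the frame computation entirely. Once you are in the non-degenerate situation $(p_0,q_1)\neq 0$ (which, given the relations $(p_0,p_1)=(q_0,q_1)=0$ and $(p_0,q_1)+(q_0,p_1)=0$, is exactly your ``otherwise'' sub-case), the sections $p_1,q_1$ form a basis of $C$, and one computes the trace directly:
\[
\quaddiff^\xi(X,X)=\frac{1}{(p_1,q_0)}\bigl((\xi(X)d_Xp_1,q_0)-(\xi(X)d_Xq_1,p_0)\bigr).
\]
Using skewness of $\xi(X)$ together with $\xi(X)p_0=-d_Xp_1$ and $\xi(X)q_0=-d_Xq_1$, both bracketed terms equal $(d_Xp_1,d_Xq_1)$, so $\quaddiff^\xi=0$, contradicting that $[\xi]$ is a polarisation. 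No frame, no decomposition of $\xi$, no search for a constant $s$.

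Your case analysis via the rank of $c_0=V^\perp\cap C$ is also more elaborate than necessary (and slightly loose about rank varying along $I$). The paper simply splits on whether $(p_0,q_1)=0$: if so, $p_1,q_1\in C\cap V^\perp$ and Lemma~\ref{lem:W0s} gives circularity; if not, the two-line trace computation above finishes. Your kernel-dimension argument for the sub-case $p_1\equiv q_1\equiv 0$ is correct, but that sub-case is already absorbed into $(p_0,q_1)=0$.
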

\begin{proof}
Suppose that $(,)_{\infty}\equiv 0$ and $p,q\in W$ are linearly independent. Writing $p(t)= p_{0}+tp_{1}$ and $q(t) = q_{0}+tq_{1}$ we have that $p_{0}$ and $q_{0}$ are linearly independent by Lemma~\ref{lem:lcqspan} and, since $(,)_{\infty}\equiv 0$, we have that 
\begin{equation} 
\label{eqn:inf0}
0 = (p_{0},p_{1}) = (q_{0},q_{1}) = (p_{0},q_{1})+(q_{0},p_{1}).
\end{equation}
If $(p_{0},q_{1}) = 0$ then we can deduce that $C$ is circular from Lemma~\ref{lem:W0s}. Hence, we must have that $(p_{0},q_{1}) \neq 0$. Now using $p_{1},q_{1}$ as a basis for $C$ we can compute the quadratic differential of $\xi$ to be
\begin{align*}
 \quaddiff^{\xi}(X,X) &= \frac{1}{(p_{1},q_{0})}(\xi(X)d_{X}p_{1}, q_{0}) + \frac{1}{(q_{1},p_{0})}(\xi(X)d_{X}q_{1}, p_{0}) \\
 &= \frac{1}{(p_{1},q_{0})}( (\xi(X)d_{X}p_{1}, q_{0}) - (\xi(X)d_{X}q_{1}, p_{0})). 
 \end{align*}
 On the other hand, since $p,q$ are linear conserved quantities of $d+t\xi$, we have that 
 \[ dp_{1}+\xi p_{0}=dq_{1}+\xi q_{0} = 0,\]
 from which one deduces that $\quaddiff^{\xi}(X,X)=0$, contradicting that $\xi$ is a polarisation of $C$. 
\end{proof}

Let $\xi = - \mathfrak{f}\wedge d\mathfrak{f}$ be the arclength polarisation of a Legendre curve $C$ into a space form $\mathfrak{Q}^{2}$. Then $d+t\xi$ admits a two dimensional space of linear conserved quantities $W = \langle p, q\rangle$ where $p(t) = \mathfrak{p}$ and $q(t) = \mathfrak{q}- t\mathfrak{f}$. Moreover, $(p,p)_{0}\neq 0$, $(q,q)_{\infty}=2$ and $(p,p)_{\infty} = (p,q)_{\infty}=(p,q)_{0} = 0$. 

Conversely, suppose that $W = \langle p, q\rangle$ is a two dimensional space of linear conserved quantities of $d+t\xi$ satisfying $(p,p)_{0}\neq 0$, $(q,q)_{\infty}\neq 0$ and $(p,p)_{\infty} = (p,q)_{\infty}=(p,q)_{0} = 0$. By reparameterising $t$, we may assume that $(q,q)_{\infty} = 2$. Let $\mathfrak{Q}^{2}$ be the space form with point sphere complex $\mathfrak{p}:= p(0)$ and space form vector $\mathfrak{q}:= q(0)$ and let $\mathfrak{f}=C\cap \mathfrak{Q}^{2}$ and $\mathfrak{t} = C\cap \mathfrak{P}^{2}$ be the corresponding point sphere map and tangent plane congruence. Define $\tau = \mathfrak{t}\wedge p_{1}$. Then $\tilde{p}(t) := \exp(t\tau)p(t) = \mathfrak{p}$. With $\tilde{q}(t) :=  \exp(t\tau)q(t) = \mathfrak{q} + t\tilde{q}_{1}$, the conditions $(\tilde{q},\tilde{q})_{\infty} = 2$ and $(\tilde{p},\tilde{q})_{\infty}=0$ imply that $\tilde{q}_{1} = -\mathfrak{f}$. The conditions 
\[ d\tilde{p}_{1}+\xi \mathfrak{p} = d\tilde{q}_{1} + \xi \mathfrak{q} = 0\]
then imply that $\xi = -\mathfrak{f}\wedge d\mathfrak{f}$. 

We thus arrive at the following lemma: 
\begin{lemma}
\label{lem:arcpol}
Suppose that $W = \langle p,q\rangle$ is a $2$-dimensional space of linear conserved quantities of $d+t\xi$ such that $(p,p)_{0}\neq 0$, $(q,q)_{\infty}\neq 0$ and $(p,p)_{\infty} = (p,q)_{\infty}= (p,q)_{0}=0$. Then, after possibly rescaling $\xi$, we have that $[\xi]$ is the arclength polarisation of $C$ with respect to $\mathfrak{p}, \mathfrak{q}$ where $\mathfrak{p}:= p(0)$ and $\mathfrak{q}:= q(0)$. 
\end{lemma}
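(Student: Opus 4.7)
The plan is to follow the computation sketched in the paragraph immediately preceding the lemma, making each step explicit. First I would set $\mathfrak{p}:=p(0)$ and $\mathfrak{q}:=q(0)$ and observe that the hypotheses $(p,p)_0\neq 0$ and $(p,q)_0=0$ guarantee that $\mathfrak{p}$ and $\mathfrak{q}$ are orthogonal with $(\mathfrak{p},\mathfrak{p})\neq 0$. Thus, up to the overall sign $\chi=-(\mathfrak{p},\mathfrak{p})$, these define a point sphere complex and space form vector, and I can consider the corresponding point sphere map $\mathfrak{f}=C\cap \mathfrak{Q}^2$ and tangent plane congruence $\mathfrak{t}=C\cap\mathfrak{P}^2$. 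Writing $p(t)=\mathfrak{p}+tp_1$ and $q(t)=\mathfrak{q}+tq_1$ with $p_1,q_1\in\Gamma C$, the condition $(p,p)_\infty=0$ gives $(\mathfrak{p},p_1)=0$, the condition $(p,q)_\infty=0$ gives $(\mathfrak{p},q_1)+(\mathfrak{q},p_1)=0$, and $(q,q)_\infty\neq 0$ gives $(\mathfrak{q},q_1)\neq 0$. A rescaling $\xi\mapsto\lambda\xi$ (equivalently $t\mapsto t/\lambda$) then normalises $(q,q)_\infty=2$.

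Next I would exploit the gauge freedom to simplify $p$. Set $\tau:=\mathfrak{t}\wedge p_1\in\Gamma(\wedge^2 f)$ and consider the gauge-equivalent polarisation obtained by replacing $\xi$ by $\xi-d\tau$; the corresponding conserved quantities are $\tilde p(t)=\exp(t\tau)p(t)$ and $\tilde q(t)=\exp(t\tau)q(t)$. Direct computation using $(a\wedge b)c=(a,c)b-(b,c)a$ together with $(\mathfrak{t},\mathfrak{p})=-1$, $(\mathfrak{t},\mathfrak{q})=0$, and the fact that $C$ is null gives $\tau(\mathfrak{p})=-p_1$, $\tau(p_1)=0$, $\tau(q_1)=0$, $\tau(\mathfrak{t})=0$, and $\tau(\mathfrak{q})=-(p_1,\mathfrak{q})\mathfrak{t}$. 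All squares $\tau^2$ acting on these vectors vanish, so $\exp(t\tau)$ truncates at first order, yielding $\tilde p(t)=\mathfrak{p}$ and $\tilde q(t)=\mathfrak{q}+t\tilde q_1$ where $\tilde q_1=q_1-(p_1,\mathfrak{q})\mathfrak{t}$.

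Now I would identify $\tilde q_1$ with $-\mathfrak{f}$. Because the pencil of metrics is gauge invariant, $\tilde q_1\in\Gamma C$ still satisfies $(\mathfrak{p},\tilde q_1)=0$ (from $(\tilde p,\tilde q)_\infty=0$) and $(\mathfrak{q},\tilde q_1)=1$ (from $(\tilde q,\tilde q)_\infty=2$). The section of $C$ characterised by $(\cdot,\mathfrak{p})=0$ and $(\cdot,\mathfrak{q})=-1$ is precisely $\mathfrak{f}$, so $\tilde q_1=-\mathfrak{f}$. Finally, the recursion relations for a linear conserved quantity give $d\tilde p_1+\tilde\xi\mathfrak{p}=0$ (trivially, since $\tilde p_1=0$) and $d\tilde q_1+\tilde\xi\mathfrak{q}=0$, i.e.\ $\tilde\xi\mathfrak{q}=d\mathfrak{f}$ and $\tilde\xi\mathfrak{p}=0$. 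A direct check using $(\mathfrak{f},\mathfrak{p})=0$, $(\mathfrak{f},\mathfrak{q})=-1$, and $d(\mathfrak{f},\mathfrak{q})=0$ shows that $-\mathfrak{f}\wedge d\mathfrak{f}$ satisfies exactly these two actions.

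The only genuinely non-routine step is concluding $\tilde\xi=-\mathfrak{f}\wedge d\mathfrak{f}$ from agreement on $\mathfrak{p}$ and $\mathfrak{q}$ alone; this is where I expect the main obstacle. The point is that both forms lie in $\Omega^1(\wedge^2 C^\perp)\subset\Omega^1(C\wedge C^\perp)$ (using $\mathfrak{f}\in\Gamma C$ and $d\mathfrak{f}\in\Omega^1(C^\perp)$ since $C^{(1)}=C^\perp$), and any such $2$-form automatically annihilates $C$ because $C\subset C^\perp$. Since $\mathfrak{p}$, $\mathfrak{q}$ together with $C$ span a subspace whose orthogonal complement lies in $C$, agreement on $\mathfrak{p}$ and $\mathfrak{q}$ together with annihilation of $C$ forces the two $2$-forms to coincide, completing the identification $[\xi]=[-\mathfrak{f}\wedge d\mathfrak{f}]$.
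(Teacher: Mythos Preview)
Your approach is essentially identical to the paper's: normalise so that $(q,q)_\infty=2$, gauge by $\tau=\mathfrak{t}\wedge p_1$ to reduce to $\tilde p(t)=\mathfrak{p}$, identify $\tilde q_1=-\mathfrak{f}$ from the metric conditions, and then read off $\tilde\xi=-\mathfrak{f}\wedge d\mathfrak{f}$ from the recursion relations. Your intermediate computations with $\tau$ are correct and more detailed than what the paper records.

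There is, however, a slip in the final paragraph. Your claim that the orthogonal complement of $\langle\mathfrak{p},\mathfrak{q}\rangle\oplus C$ lies in $C$ is false: that complement is the one-dimensional span of $\mathfrak{f}'$, which sits in $C^\perp\setminus C$. So the argument as written does not close. The repair is easy. Since $C\subset C^\perp$ one has $C\wedge C^\perp=\wedge^2 C^\perp$ (your containment is actually an equality), and $C^\perp$ has rank~$3$, so every nonzero element of $\wedge^2 C^\perp$ is decomposable with kernel of dimension exactly~$3$. But the difference $\delta:=\tilde\xi-(-\mathfrak{f}\wedge d\mathfrak{f})$ annihilates the four-dimensional space $C\oplus\langle\mathfrak{p},\mathfrak{q}\rangle$, forcing $\delta=0$. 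Alternatively, writing $\delta=\alpha\,\mathfrak{f}\wedge\mathfrak{t}+\beta\,\mathfrak{f}\wedge\mathfrak{f}'+\gamma\,\mathfrak{t}\wedge\mathfrak{f}'$ and applying it to $\mathfrak{p}$ gives $\alpha\mathfrak{f}-\gamma\mathfrak{f}'=0$, hence $\alpha=\gamma=0$; then $\delta\mathfrak{q}=-\beta\mathfrak{f}'=0$ gives $\beta=0$. Either way you get $\tilde\xi=-\mathfrak{f}\wedge d\mathfrak{f}$, and the rest of your argument stands.
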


\begin{lemma}
\label{lem:arcpol2}
Suppose that $C$ is non-circular and that $W$ is a $3$-dimensional space of linear conserved quantities of $d+t\xi$. Then $[\xi]$ is the arclength polarisation of $C$ with respect to some $\mathfrak{p},\mathfrak{q}\in W(0)$. 
\end{lemma}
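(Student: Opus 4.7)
The plan is to locate a $2$-dimensional subspace of $W$ satisfying the hypotheses of Lemma~\ref{lem:arcpol} and invoke that lemma. Since $C$ is non-circular, the argument of Lemma~\ref{lem:lcqspan} (applied with $t$ in place of $0$) shows that $p \mapsto p(t)$ is injective for every $t \in \mathbb{R}$, so $W(t)$ is a $3$-dimensional subspace of $\mathbb{R}^{3,2}$ and, in particular, the pencil $(,)_t$ is nowhere identically zero on $W$ (else $W(t_*)$ would be a totally isotropic subspace of dimension exceeding the maximum isotropic dimension $2$ of $\mathbb{R}^{3,2}$). The goal reduces to finding $p, q \in W$ linearly independent with $(p,p)_0 \neq 0$, $(p,p)_\infty = (p,q)_0 = (p,q)_\infty = 0$, and $(q,q)_\infty \neq 0$.

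Existence of $(,)_\infty$-null vectors in $W$ is immediate: for any fixed $x_0 \in I$ the evaluation $W \to C(x_0)$, $p \mapsto p_1(x_0)$, has kernel of dimension at least one, and any $p$ in this kernel satisfies $(p,p)_\infty = 2(p_0, p_1(x_0)) = 0$ since $(p_0, p_1)$ is constant in $x$. To arrange $(p,p)_0 \neq 0$ as well, I would suppose by contradiction that the $(,)_\infty$-null cone $N_\infty$ is contained in the $(,)_0$-null cone $N_0$. By Lemma~\ref{lem:infmet}, the radical of $(,)_\infty$ on $W$ has dimension at most one; hence $N_\infty$ is either a $2$-dimensional subvariety of $W$ or the $1$-dimensional radical itself. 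In the first subcase, viewing $N_\infty \subset N_0$ as conics in $\mathbb{P}(W) \cong \mathbb{P}^2$, the inclusion forces $(,)_0$ to be proportional to $(,)_\infty$ or to vanish identically, either of which produces $t_* \in \mathbb{R} \cup \{\infty\}$ with $(,)_{t_*} \equiv 0$, contradicting the conclusion of the preceding paragraph.

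In the remaining subcase $N_\infty$ coincides with the $1$-dimensional radical $K$ of $(,)_\infty$. Then the kernel of $\phi_{x_0}$ equals $K$ for every $x_0$, so any element of $K$ has $p_1 \equiv 0$ and $K = \mathbb{R}\mathfrak{r}$ for a constant $\mathfrak{r}$ satisfying $\xi\mathfrak{r} = 0$. The hypothesis $(\mathfrak{r},\mathfrak{r}) = 0$, combined with $\mathfrak{r}$ lying in the radical, gives $(\mathfrak{r}, q_1(x)) = 0$ for all $q \in W$, $x \in I$. Provided $\{q_1(x) : q \in W\}$ spans $C(x)$ on an open subset of $I$, one obtains $\mathfrak{r} \in C^\perp(x)$ there; being null, $\mathfrak{r} \in \mathcal{L} \cap C^\perp(x) = C(x)$, which makes $\mathfrak{r}$ a constant section of $C$ and $C$ circular by Lemma~\ref{lem:circ}, a contradiction. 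I expect the hardest step to be ruling out the exceptional scenario in which $\{q_1(x) : q \in W\}$ is a proper rank-$1$ subbundle of $C$ everywhere; this should follow from a finer analysis of the structural equation $dq_1 + \xi q_0 = 0$ together with the non-vanishing of $\quaddiff^\xi$.

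Given such $p$, the companion $q$ is obtained by intersecting the codimension-one subspaces $\{w \in W : (p, w)_0 = 0\}$ and $\{w \in W : (p, w)_\infty = 0\}$ of $W$: when $(p, \cdot)_\infty \not\equiv 0$ these are distinct $2$-planes meeting in a $1$-dimensional subspace spanned by a candidate $q$, and otherwise any $q$ in $\{w : (p, w)_0 = 0\}$ independent of $p$ suffices. In either situation $q$ is linearly independent from $p$ with $(p, q)_0 = (p, q)_\infty = 0$, and a final application of Lemma~\ref{lem:infmet} to $\langle p, q\rangle$ forces $(q, q)_\infty \neq 0$. Lemma~\ref{lem:arcpol} applied to $\langle p, q\rangle$ then concludes that $[\xi]$ is the arclength polarisation with respect to $\mathfrak{p} := p(0)$ and $\mathfrak{q} := q(0)$, both lying in $W(0)$.
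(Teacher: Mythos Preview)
Your proof is correct but takes a different route from the paper. The paper's argument is more direct and geometric: since $W(0)$ and $C^\perp$ are both $3$-dimensional subbundles of $\underline{\mathbb{R}}^{3,2}$, their intersection $\nu := C^\perp\cap W(0)$ is nontrivial at every point; Lemma~\ref{lem:W0s} then forces $\nu(x)\not\le C(x)$ for some $x$, producing $\mathfrak{p}\in W(0)\cap C(x)^\perp$ with $(\mathfrak{p},\mathfrak{p})\neq 0$. The corresponding $p\in W$ then automatically satisfies $(p,p)_\infty=0$ (its top term lies in $C(x)\subset \mathfrak{p}^\perp$) and $(p,p)_0=(\mathfrak{p},\mathfrak{p})\neq 0$, and the construction of $q$ proceeds as in your final paragraph. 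Your approach instead analyses the pencil of quadratic forms $(\,,\,)_t = (\,,\,)_0 + t(\,,\,)_\infty$ on the abstract $3$-space $W$ and argues by contradiction via inclusion of null cones in $\mathbb{P}(W)$. The paper's method is shorter and invokes the geometry of $C$ once, early; yours is more algebraic and makes the role of the pencil structure explicit, at the cost of the case split.

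One remark: the gap you flag as ``the hardest step'' is in fact already closed by your own reasoning. In your Case~(b) you establish $\ker\phi_{x_0}=K$ for every $x_0$, with $K$ one-dimensional; rank--nullity then gives $\operatorname{im}\phi_{x_0}=C(x_0)$, since $\dim W=3$ and $\dim C(x_0)=2$. Thus $\{q_1(x):q\in W\}=C(x)$ for all $x$, and the ``exceptional scenario'' cannot occur. With this observation your argument is complete.
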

\begin{proof}
By Lemma~\ref{lem:lcqspan} we have that $W(0)$ is a $3$-dimensional subspace of $\mathbb{R}^{3,2}$. On the other hand $C^{\perp}$ is a $3$-dimensional subbundle of $\underline{\mathbb{R}}^{3,2}$. Thus with $\nu:= C^{\perp}\cap W(0)$ we have that $\nu(x)\neq \{0\}$ for all $x\in I$. By Lemma~\ref{lem:W0s} we must have for some $x\in I$ that $\nu(x)\not\le C(x)$. Thus there exists a vector $\mathfrak{p}\in W(0)$ with $\mathfrak{p}\in (C(x))^{\perp}$ and $(\mathfrak{p},\mathfrak{p})\neq 0$. We can then find $p\in W$ such that $p(0)=\mathfrak{p}$. Since the top term of $p$ lies in $C$ and $\mathfrak{p}\in (C(x))^{\perp}$ it follows that $(p,p)_{\infty} =0$. Denote by $p^{\perp_{\infty}}$ and $p^{\perp_{0}}$ the subspaces of $W$ that are perpendicular to $p$ with respect to $(,)_{\infty}$ and $(,)_{0}$. These are $2$-dimensional subspaces of $W$, thus there exists a non-zero $q\in p^{\perp_{\infty}}\cap p^{\perp_{0}}$. By Lemma~\ref{lem:infmet} we must have that $(q,q)_{\infty}\neq 0$, as otherwise $(,)_{\infty}$ would vanish on $\langle p,q\rangle$. Thus, by Lemma~\ref{lem:arcpol}, $[\xi]$ is the arclength polarisation of $C$ with respect to $\mathfrak{p}$ and $\mathfrak{q}:=q(0)$.
\end{proof}

\subsection{Constrained elastic curves}

Fix a point sphere complex $\mathfrak{p}$ and space form vector $\mathfrak{q}$ for a space form $\mathfrak{Q}^{2}$ with sectional curvature $\kappa = -(\mathfrak{q},\mathfrak{q})$ and $\chi = -(\mathfrak{p},\mathfrak{p})$. The corresponding point sphere map $\mathfrak{f}$ and tangent plane congruence $\mathfrak{t}$ of a Legendre curve $C:I\to\mathcal{Z}$ satisfy the relation $d\mathfrak{t} = - k d\mathfrak{f}$, where $k$ is the curvature of $\mathfrak{f}$. The curvature sphere of $f$ is thus given by $c = \langle \mathfrak{t}+k\mathfrak{f}\rangle$. Hence, $C$ is circular if and only if $k$ is constant. 

Let $'$ denote differentiation with respect to the arclength parameter of $\mathfrak{f}$. Then according to~\cite{BPP2008,H2014} (the Lorentzian case can be deduced from~\cite{P2018} and the references therein), $\mathfrak{f}$ is a \textit{constrained elastic curve} in $\mathfrak{Q}^{2}$ if and only if 
\begin{equation}
\label{eqn:elsecond} 
k'' + \chi\frac{k^{3}}{2}  + (\mu+\kappa)k + \lambda=0,
\end{equation}
for some $\mu, \lambda\in \mathbb{R}$. If $\lambda=0$ then $\mathfrak{f}$ is an \textit{elastic curve} and if $\lambda = \mu = 0$ then $\mathfrak{f}$ is a \textit{free elastic curve}. Clearly circular curves, that is when $k$ is constant, are constrained elastic curves. 

Define a vector 
\begin{equation}
\label{eqn:c}
\mathfrak{r} := -(k\kappa+\lambda)\mathfrak{f} + (\tfrac{k^{2}}{2} \chi + \mu)\mathfrak{t} - k' \mathfrak{f}' + k\mathfrak{q} - \tfrac{k^{2}}{2}\mathfrak{p},
\end{equation}
for some constants $\lambda, \mu\in \mathbb{R}$. Noting that 
\[ \mathfrak{f}'' = - \kappa \mathfrak{f} +\chi k\mathfrak{t} + \mathfrak{q} - k \mathfrak{p}, \]
we can calculate that
\[ \mathfrak{r}' = (-k'' - k\kappa -\lambda - \tfrac{k^3}{2}\chi - \mu k)\mathfrak{f}'.\]
It then follows that $\mathfrak{r}'=0$ if and only if $\mathfrak{f}$ is a constrained elastic curve satisfying~\eqref{eqn:elsecond}. Moreover, one has that $(\mathfrak{t} + \frac{k}{2}\mathfrak{f},\mathfrak{r})=0$. 

Conversely, suppose that $(\mathfrak{t} + \frac{k}{2}\mathfrak{f},\mathfrak{r})=0$, for some non-zero constant $\mathfrak{r}\in \mathbb{R}^{3,2}$. We may write $\mathfrak{r}$ in terms of the basis $\{\mathfrak{f},\mathfrak{t},\mathfrak{f}',\mathfrak{q},\mathfrak{p}\}$ as 
\[ \mathfrak{r} = \alpha\mathfrak{f} +\beta\mathfrak{t} +\gamma \mathfrak{f}' + \delta\mathfrak{q} +\epsilon\mathfrak{p},\]
for some functions $\alpha, \beta, \gamma, \delta, \epsilon$. 
%Noting that 
%\[ \mathfrak{f}'' = - \kappa \mathfrak{f} +\chi k\mathfrak{t} + \mathfrak{q} - k \mathfrak{p},\]
$\mathfrak{r}'=0$ implies that
\begin{gather*}
	\alpha'-\gamma\kappa = 0, \quad \beta' + \gamma \chi k =0, \quad \alpha - \beta k +\gamma' = 0,\\
	\gamma + \delta' = 0, \quad \gamma k- \epsilon' =0. 
\end{gather*}

The condition $(\mathfrak{t} + \frac{k}{2}\mathfrak{f},\mathfrak{r})=0$ implies that $\epsilon +\frac{k}{2}\delta = 0$. It thus follows that $k\delta' = k'\delta$. Hence, if $k\not\equiv 0$, we may write $\delta = Ak$ for some constant $A$. If $A = 0$ then it follows that $\gamma = \delta = \epsilon =0$ and thus $\mathfrak{r}\in \Gamma f$. Hence, $C$ is circular. Otherwise we may assume, without loss of generality, that $A=1$. It then follows that $\epsilon = - \frac{k^{2}}{2}$, $\gamma = - \kappa'$ and that $\alpha = -k\kappa - \lambda$ and $\beta =  \frac{k^{2}}{2}\chi +\mu$ for some constants $\lambda$ and $\mu$. Thus $\mathfrak{r}$ has the form~\eqref{eqn:c} and it follows that $\mathfrak{f}$ is a constrained elastic curve satisfying~\eqref{eqn:elsecond}.

\begin{proposition}
\label{prop:elasticcomplex}
$\mathfrak{f}$ is a constrained elastic curve if and only if $(\mathfrak{t} + \frac{k}{2}\mathfrak{f},\mathfrak{r})=0$ for some non-zero constant $\mathfrak{r}\in \mathbb{R}^{3,2}$.
%, i.e., the circle congruence $\langle \mathfrak{t} + \frac{k}{2}\mathfrak{f}\rangle$ takes values in a fixed linear sphere complex. 
\end{proposition}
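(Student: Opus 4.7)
The plan is to prove both implications by direct computation in the adapted frame $\{\mathfrak{f},\mathfrak{t},\mathfrak{f}',\mathfrak{q},\mathfrak{p}\}$ of $\mathbb{R}^{3,2}$, using the Frenet-type relation $\mathfrak{f}'' = - \kappa \mathfrak{f} +\chi k\mathfrak{t} + \mathfrak{q} - k \mathfrak{p}$ together with $\mathfrak{t}' = -k\mathfrak{f}'$ that follow from $d\mathfrak{t} = -k\,d\mathfrak{f}$ and the space-form relations between $\mathfrak{f}$, $\mathfrak{q}$, $\mathfrak{p}$.

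For the forward direction, I would take $\mathfrak{r}$ to be the explicit vector defined by \eqref{eqn:c}, namely
\[
\mathfrak{r} := -(k\kappa+\lambda)\mathfrak{f} + (\tfrac{k^{2}}{2} \chi + \mu)\mathfrak{t} - k' \mathfrak{f}' + k\mathfrak{q} - \tfrac{k^{2}}{2}\mathfrak{p},
\]
with $\lambda,\mu$ the constants supplied by the elastica equation \eqref{eqn:elsecond}. Differentiating and substituting the Frenet relation collapses every term except a multiple of $\mathfrak{f}'$, leaving $\mathfrak{r}' = (-k''-\chi\tfrac{k^{3}}{2}-(\mu+\kappa)k-\lambda)\mathfrak{f}'$, which vanishes exactly when \eqref{eqn:elsecond} holds. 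The orthogonality $(\mathfrak{t}+\tfrac{k}{2}\mathfrak{f},\mathfrak{r})=0$ is then a one-line check from \eqref{eqn:c} using $(\mathfrak{f},\mathfrak{f})=-\chi^{-1}\cdot\ldots$ — more precisely, using only that $\mathfrak{f},\mathfrak{t},\mathfrak{f}',\mathfrak{q},\mathfrak{p}$ form an orthogonal basis with the known norms.

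For the converse, given a constant $\mathfrak{r}\in\mathbb{R}^{3,2}$, I would expand it as $\mathfrak{r} = \alpha\mathfrak{f} +\beta\mathfrak{t} +\gamma \mathfrak{f}' + \delta\mathfrak{q} +\epsilon\mathfrak{p}$ with function coefficients, and set $\mathfrak{r}'\equiv 0$. Collecting coefficients against the basis yields the system
\[
\alpha'-\gamma\kappa = 0,\quad \beta' + \gamma \chi k =0,\quad \alpha - \beta k +\gamma' = 0,\quad \gamma + \delta' = 0,\quad \gamma k- \epsilon' =0.
\]
The orthogonality hypothesis $(\mathfrak{t}+\tfrac{k}{2}\mathfrak{f},\mathfrak{r})=0$ reads $\epsilon+\tfrac{k}{2}\delta = 0$, whence differentiation combined with the system gives $k\delta'=k'\delta$. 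On the open set where $k\neq 0$ this forces $\delta = A k$ for a constant $A$, and a straightforward back-substitution recovers $\epsilon = -\tfrac{k^{2}}{2}A$, $\gamma = -Ak'$, $\beta = A(\chi\tfrac{k^{2}}{2}+\mu)$ and $\alpha = -A(k\kappa+\lambda)$ for constants $\mu,\lambda$, i.e.\ $\mathfrak{r} = A\cdot(\text{formula }\eqref{eqn:c})$; then \eqref{eqn:elsecond} drops out of $\mathfrak{r}'=0$ as in the first direction.

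The main technical obstacle is handling degeneracies in this converse argument: if $A=0$ one obtains $\gamma=\delta=\epsilon=0$ so that $\mathfrak{r}\in\Gamma C$ is constant, which by Lemma~\ref{lem:circ} (or a short direct argument using $\mathfrak{r}'=0$) forces $C$ to be circular; similarly one must ensure that $k\equiv 0$ or isolated zeros of $k$ do not spoil the conclusion $\delta = Ak$. In both degenerate regimes $k$ turns out to be constant, so $\mathfrak{f}$ is trivially a constrained elastic curve, and the scaling freedom $A\neq 0$ can be absorbed since $\mathfrak{r}$ is only determined up to a constant nonzero multiple. Once these cases are dispatched, the equivalence reduces to the explicit computation of $\mathfrak{r}'$ already performed.
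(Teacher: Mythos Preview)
Your proposal is correct and follows essentially the same approach as the paper: the paper also constructs $\mathfrak{r}$ explicitly via \eqref{eqn:c} for the forward direction, and for the converse expands $\mathfrak{r}$ in the frame $\{\mathfrak{f},\mathfrak{t},\mathfrak{f}',\mathfrak{q},\mathfrak{p}\}$, derives the same five-equation system from $\mathfrak{r}'=0$, uses $\epsilon+\tfrac{k}{2}\delta=0$ to obtain $k\delta'=k'\delta$, and then handles the $A=0$ degeneracy via circularity exactly as you describe. Your treatment of the degenerate cases is in fact slightly more careful than the paper's (which only says ``if $k\not\equiv 0$''), but the argument is the same.
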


Hence, we obtain the following geometric characterisation for constrained elastic curves in space forms.
\begin{theorem}\label{thm:elasticcomplex}
	A curve with curvature $k$ is constrained elastic if and only if the enveloped congruence  of circles with geodesic curvature $\frac{k}{2}$ takes values in a fixed linear circle complex.
\end{theorem}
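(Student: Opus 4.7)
The plan is to view Theorem~\ref{thm:elasticcomplex} as a geometric translation of Proposition~\ref{prop:elasticcomplex}. Proposition~\ref{prop:elasticcomplex} already gives the analytic equivalence: $\mathfrak{f}$ is constrained elastic if and only if there is a nonzero constant $\mathfrak{r}\in\mathbb{R}^{3,2}$ with $(\mathfrak{t}+\tfrac{k}{2}\mathfrak{f},\mathfrak{r})=0$. So all that remains is to identify $\langle\mathfrak{t}+\tfrac{k}{2}\mathfrak{f}\rangle$ geometrically as the enveloped circle of geodesic curvature $\tfrac{k}{2}$ and to recognise the perpendicularity to a constant vector as the linear circle complex condition.

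First I would clarify the geometric meaning of the section $\mathfrak{t}+\alpha\mathfrak{f}$ for a real-valued function $\alpha$. The rank-$2$ bundle $C=\mathfrak{f}\oplus\mathfrak{t}$ has the curvature sphere $c=\langle\mathfrak{t}+k\mathfrak{f}\rangle$, which by construction lifts the osculating circle of $\mathfrak{f}$ (curvature $k$). Replacing $k$ by a general $\alpha$ gives the (oriented) lift of the circle tangent to $\mathfrak{f}$ with geodesic curvature $\alpha$; this follows directly from the normalisations $(\mathfrak{f},\mathfrak{f})=0$, $(\mathfrak{f},\mathfrak{p})=-1$, $(\mathfrak{f},\mathfrak{q})=0$, $(\mathfrak{t},\mathfrak{p})=0$, $(\mathfrak{t},\mathfrak{q})=-1$ used to set up the space form. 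Specialising to $\alpha=\tfrac{k}{2}$ identifies $\mathfrak{t}+\tfrac{k}{2}\mathfrak{f}$ with the enveloped circle congruence named in the statement.

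Next I would invoke the definition of a linear circle complex from Section~\ref{subsec:linsphcom}: a congruence of circles $s:I\to\mathbb{P}(\mathcal{L})$ lies in the fixed linear circle complex $E_{L}=\mathbb{P}(\mathcal{L})\cap L^{\perp}$, with $L=\langle\mathfrak{r}\rangle$, precisely when $(s,\mathfrak{r})\equiv 0$ for some nonzero constant $\mathfrak{r}$. Applied to $s=\mathfrak{t}+\tfrac{k}{2}\mathfrak{f}$, this is exactly the condition that appears in Proposition~\ref{prop:elasticcomplex}. Combining the two equivalences then yields Theorem~\ref{thm:elasticcomplex}.

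The only non-trivial step is the first one, since it depends on the convention that a section $\mathfrak{t}+\alpha\mathfrak{f}$ of $C$ corresponds to the tangent circle of geodesic curvature $\alpha$; after that the argument is purely formal. I do not expect real obstacles, as the hard analytic work---deriving \eqref{eqn:elsecond} from the constancy of $\mathfrak{r}$---has already been carried out in Proposition~\ref{prop:elasticcomplex}.
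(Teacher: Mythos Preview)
Your proposal is correct and matches the paper's approach exactly: the paper simply introduces Theorem~\ref{thm:elasticcomplex} with the word ``Hence'' after Proposition~\ref{prop:elasticcomplex}, treating it as an immediate geometric translation via the identification of $\langle\mathfrak{t}+\tfrac{k}{2}\mathfrak{f}\rangle$ with the tangent circle of geodesic curvature $\tfrac{k}{2}$ and of the orthogonality condition with membership in a linear circle complex. One minor slip: you have the normalisations swapped---in the paper's conventions $(\mathfrak{f},\mathfrak{q})=-1$, $(\mathfrak{f},\mathfrak{p})=0$, $(\mathfrak{t},\mathfrak{p})=-1$, $(\mathfrak{t},\mathfrak{q})=0$---but this does not affect the identification of $\mathfrak{t}+\alpha\mathfrak{f}$ as the tangent circle of curvature $\alpha$.
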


\begin{remark}
Notice from~\eqref{eqn:c} that when $\lambda =0$ one has that $\mathfrak{r} \perp \mathfrak{q}$, whereas when $\mu =0$ one has that $\mathfrak{r}\perp \mathfrak{p}$. Thus, by Remark~\ref{rem:spcomplex}, for elastic curves in Euclidean 2-space with $k'^{2}+ \frac{k^4}{4} +\mu\, k^{2} >0$ (so that $\mathfrak{r}$ is spacelike), we have that the circle congruence $\langle \mathfrak{t} + \frac{k}{2}\mathfrak{f}\rangle$ intersects a fixed line at a constant angle $\theta$. Moreover for free elastic curves $\theta = \frac{\pi}{2}$. See Figure \ref{fig:elastic} for some examples of this principle.
\end{remark}

\begin{figure}
	\centering
		\includegraphics[width=0.5\textwidth]{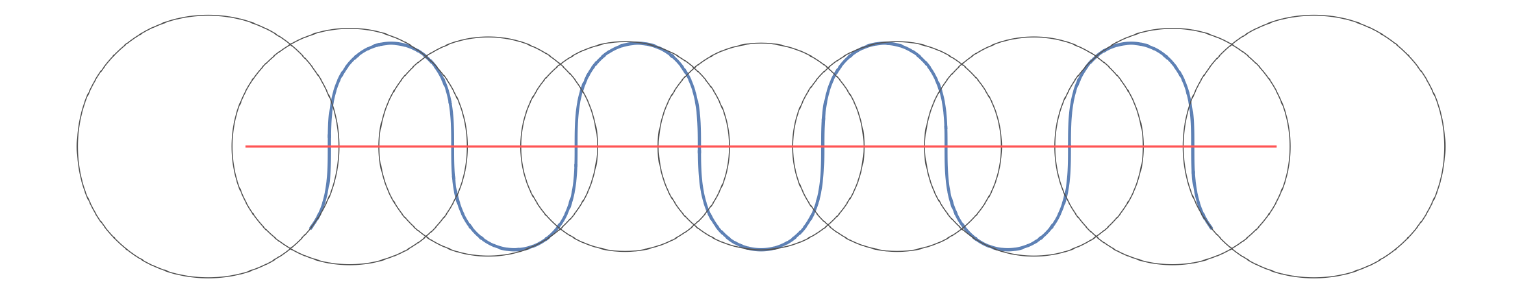}
	
	\centering
	\begin{minipage}{0.51\textwidth}
		\centering
		\includegraphics[width=0.95\linewidth]{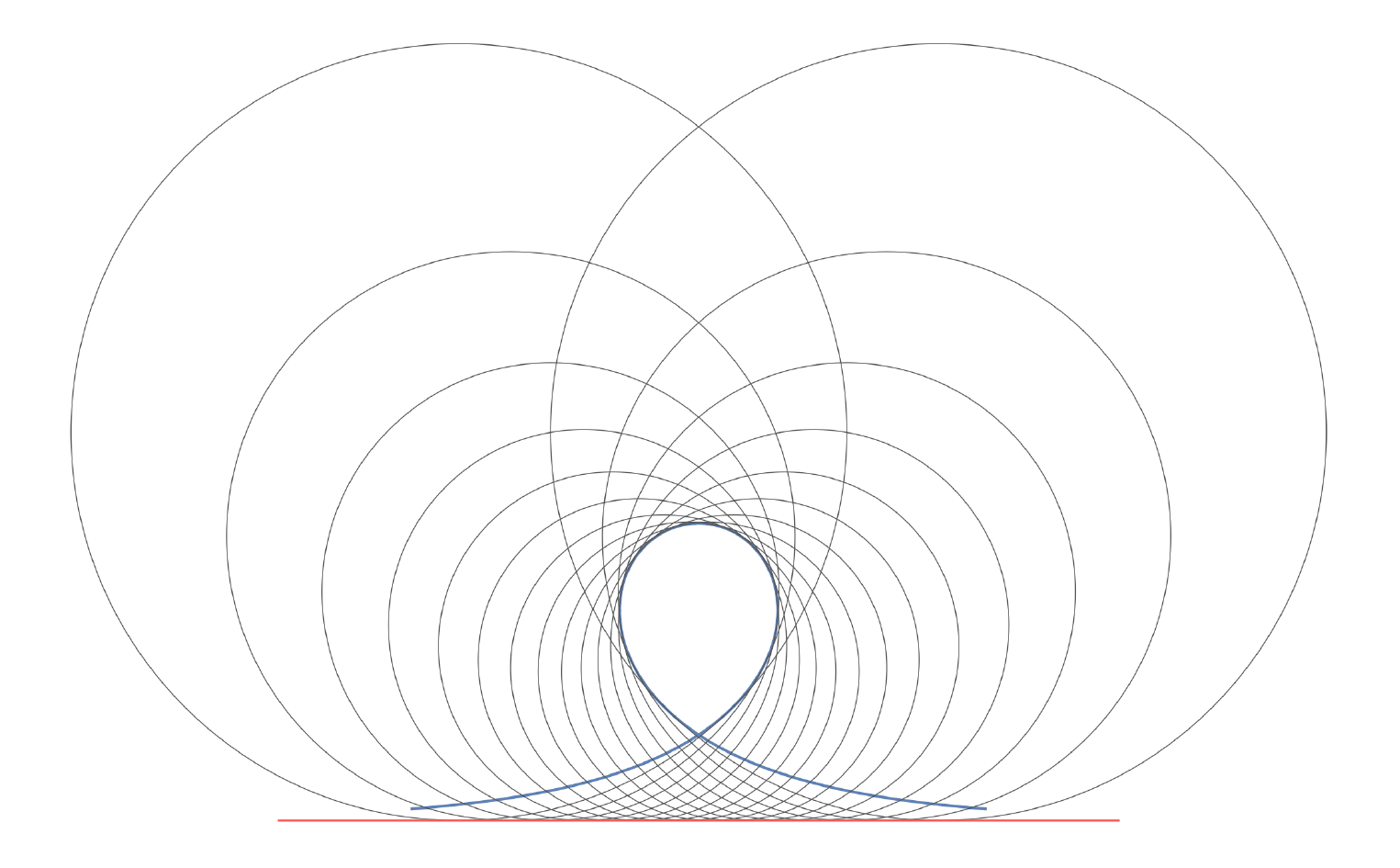}
	\end{minipage}
	\begin{minipage}{0.45\textwidth}
		\centering
		\includegraphics[width=0.8\linewidth]{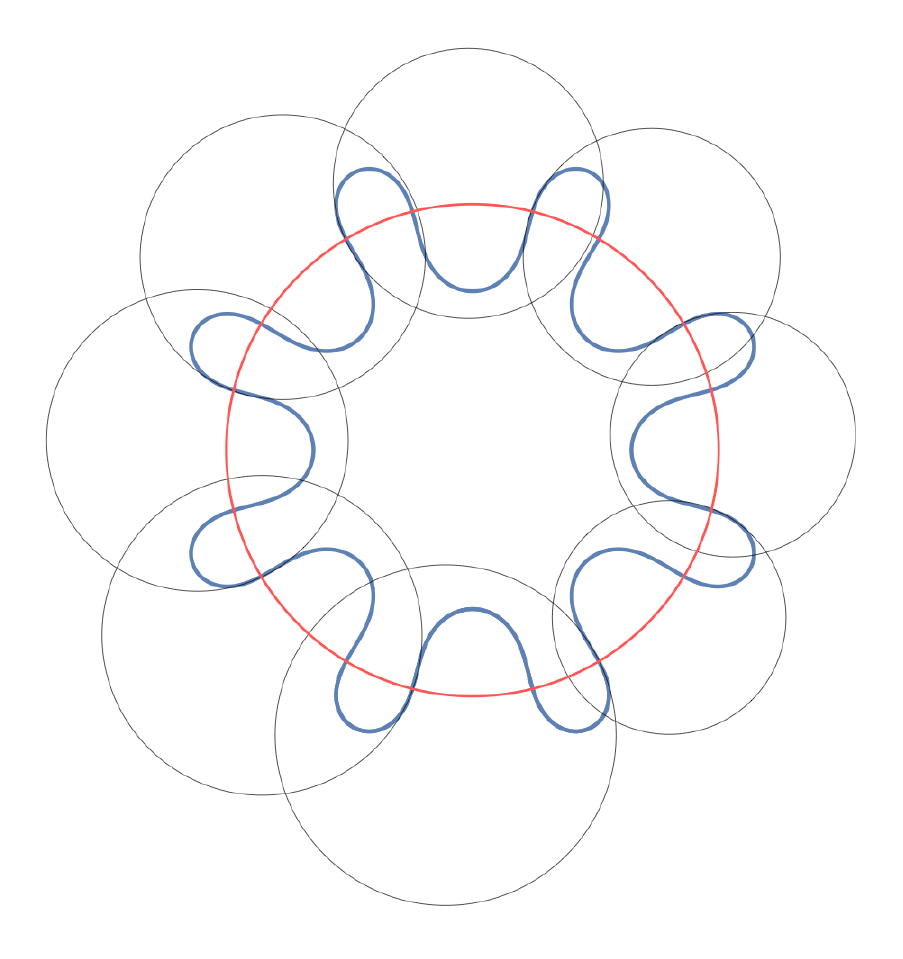}
	\end{minipage}
	
	\caption{Various constrained elastic curves (in blue) and the enveloped congruence of circles with geodesic curvature $\frac{k}{2}$ (in grey) intersecting a fixed circle or line (in red) at a fixed angle.}
	\label{fig:elastic}
\end{figure}

Using the result of Proposition~\ref{prop:elasticcomplex}, we are now able to prove another characterisation of constrained elastic curves in terms of linear conserved quantities.
\begin{theorem}
\label{thm:constrained}
$d+t\xi$ admits a $3$-dimensional space of linear conserved quantities $W$ for some polarisation $[\xi]$ of $C$ if and only if $C$ projects to a constrained elastic curve in an appropriate space form. The point sphere complex $\mathfrak{p}$ and space form vector $\mathfrak{q}$ of this space form satisfy $\mathfrak{p},\mathfrak{q}\in W(0)$. 
\end{theorem}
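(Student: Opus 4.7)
The proof splits into two directions. For sufficiency, assume $\mathfrak{f}$ is a constrained elastic curve in $\mathfrak{Q}^{2}$ and work with the arclength polarisation $\xi = -\mathfrak{f}\wedge d\mathfrak{f}$. My plan is to exhibit three linearly independent linear conserved quantities of $d+t\xi$: the trivial $p(t) = \mathfrak{p}$, the standard $q(t) = \mathfrak{q} - t\mathfrak{f}$, and a third $r(t) = \mathfrak{r} + t(2\mathfrak{t} + k\mathfrak{f})$, where $\mathfrak{r}$ is the constant vector supplied by Proposition~\ref{prop:elasticcomplex}. Conservation of $r$ splits into verifying $\xi(2\mathfrak{t} + k\mathfrak{f}) = 0$, immediate from the Legendre property since both $(\mathfrak{f}, 2\mathfrak{t}+k\mathfrak{f})$ and $(d\mathfrak{f}, 2\mathfrak{t}+k\mathfrak{f})$ vanish, and $d(2\mathfrak{t}+k\mathfrak{f}) + \xi\mathfrak{r} = 0$, which follows from the Frenet relation $d\mathfrak{t} = -k\, d\mathfrak{f}$ together with the pairings $(\mathfrak{f},\mathfrak{r}) = -k$ and $(\mathfrak{f}',\mathfrak{r}) = -k'$ read off from the explicit formula~\eqref{eqn:c}. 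In the non-circular case, the nonzero $\mathfrak{f}'$-component $-k'$ of $\mathfrak{r}$ forces $\mathfrak{r}\notin \langle\mathfrak{p},\mathfrak{q}\rangle$, hence the independence of $r$ from $p,q$; the circular case is handled by noting that circles are always constrained elastic and choosing any admissible $\mathfrak{r}$ outside $\langle\mathfrak{p},\mathfrak{q}\rangle$.

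For necessity, suppose $W$ is a $3$-dimensional space of linear conserved quantities of $d+t\xi$. The circular case is trivial. Otherwise, Lemma~\ref{lem:arcpol2} identifies $[\xi]$ with the arclength polarisation for some $\mathfrak{p},\mathfrak{q}\in W(0)$, and I would pick $r = \mathfrak{r} + t\sigma \in W$ independent of $p, q$, write $\sigma = a\mathfrak{f} + b\mathfrak{t}\in \Gamma C$, and expand the constant vector $\mathfrak{r}$ in the moving frame as $\mathfrak{r} = \alpha\mathfrak{f} + \beta\mathfrak{t} + \gamma\mathfrak{f}' + \delta\mathfrak{q} + \epsilon\mathfrak{p}$. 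Unpacking $(d+t\xi)r = 0$ yields $b' = 0$, $a' = -\gamma$, and $a - bk = -\delta$, which combined with the five equations for $\mathfrak{r}' = 0$ derived before Proposition~\ref{prop:elasticcomplex} integrate to $\delta = \tfrac{b}{2}k - \tfrac{C_1}{2}$ and $\epsilon = -\tfrac{b}{4}k^2 + C_2$ for integration constants $C_1, C_2$. The key point is that these two constants coincide exactly with the ambiguity $r \mapsto r + c_1 q + c_2 p$ (which shifts $\delta \to \delta + c_1$ and $\epsilon \to \epsilon + c_2$), so taking $c_1 = C_1/2$ and $c_2 = -C_2$ produces $\epsilon = -\tfrac{k}{2}\delta$, i.e., $(\mathfrak{t}+\tfrac{k}{2}\mathfrak{f},\mathfrak{r}) = 0$. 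Independence of the adjusted $r$ from $p,q$ forces, via Lemma~\ref{lem:lcqspan}, that $\mathfrak{r}\neq 0$, so Proposition~\ref{prop:elasticcomplex} concludes that $\mathfrak{f}$ is constrained elastic.

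The main obstacle is the forward-direction bookkeeping: one must systematically interleave the three conservation ODEs with the five equations for $\mathfrak{r}' = 0$ to reach the normal form above, and recognise that the two arbitrary integration constants are precisely those killed by modifying $r$ within its coset modulo $\langle p,q\rangle$. A subsidiary technical check is that $b \neq 0$ in the non-circular setting: if $b$ vanishes then $a, \delta$ are forced constant and $\gamma = 0$, so the equations $\alpha' = \gamma\kappa$, $\beta' = -\gamma\chi k$, and $\alpha - \beta k + \gamma' = 0$ combined with non-constancy of $k$ give $\alpha = \beta = 0$, landing $\mathfrak{r}\in\langle\mathfrak{p},\mathfrak{q}\rangle$ and contradicting independence.
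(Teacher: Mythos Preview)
Your proof is correct and follows essentially the same strategy as the paper. In the sufficiency direction you exhibit exactly the same three linear conserved quantities $p(t)=\mathfrak{p}$, $q(t)=\mathfrak{q}-t\mathfrak{f}$, $r(t)=\mathfrak{r}+t(2\mathfrak{t}+k\mathfrak{f})$; in the necessity direction both arguments invoke Lemma~\ref{lem:arcpol2} to identify $[\xi]$ as an arclength polarisation, then correct the third conserved quantity by an element of $\langle p,q\rangle$ so that Proposition~\ref{prop:elasticcomplex} applies.

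The only difference is in the bookkeeping of the necessity direction: the paper computes the pairings $(\mathfrak{f},\mathfrak{r})$ and $(\mathfrak{t},\mathfrak{r})$ directly from the conserved-quantity equation to obtain $(\mathfrak{t}+\tfrac{k}{2}\mathfrak{f},\mathfrak{r})=\zeta+\tfrac{\mu k}{2}$, whereas you expand $\mathfrak{r}$ in the moving frame $\{\mathfrak{f},\mathfrak{t},\mathfrak{f}',\mathfrak{q},\mathfrak{p}\}$ and integrate the coupled ODEs to reach the same normal form. Your route is slightly longer but has the virtue of being more explicit; in particular you check carefully that the corrected $\mathfrak{r}$ is nonzero (via Lemma~\ref{lem:lcqspan}) and that $p,q,r$ are genuinely independent, points the paper leaves implicit. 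The $b\neq 0$ check is correct but in fact redundant: your main argument already forces $\mathfrak{r}_{\text{adj}}\neq 0$ via Lemma~\ref{lem:lcqspan}, and $b=0$ would make the adjusted $r$ vanish.
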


\begin{proof}
Suppose that $C$ projects to a constrained elastic curve in a space form $\mathfrak{Q}^{2}$ with point sphere complex $\mathfrak{p}$ and space form vector $\mathfrak{q}$. Then by Proposition~\ref{prop:elasticcomplex} we have that $(\mathfrak{t} + \frac{k}{2}\mathfrak{f},\mathfrak{r})=0$ for some non-zero constant $\mathfrak{r}\in \mathbb{R}^{3,2}$. Using the form of~\eqref{eqn:c}, we have that $(\mathfrak{f},\mathfrak{r})=-k$ and $(\mathfrak{t},\mathfrak{r})=\frac{k^{2} }{2}$. Now consider the arclength polarisation $[\xi]$ with $\xi = -\mathfrak{f}\wedge d\mathfrak{f}$. Define $r(t):= \mathfrak{r} + t(2\mathfrak{t} + k\mathfrak{f})$. Then one can check that $r(t)$ is a linear conserved quantity of $d+t\xi$. Thus we have a 3-dimensional space of linear conserved quantities $W = \langle p,q,r\rangle $ of $d+t\xi$, where $p(t)= \mathfrak{p}$ and $q(t)= \mathfrak{q}-t\mathfrak{f}$. 

Conversely, suppose that $d+t\xi$ admits a $3$-dimensional space of linear conserved quantities $W$. Since circular curves are constrained elastic curves we shall assume, without loss of generality, that $C$ is non-circular. By Lemma~\ref{lem:arcpol2}, $[\xi]$ is the arclength polarisation of $C$ with respect to $\mathfrak{p}:= p(0)$ and $\mathfrak{q}:=q(0)$, for some $p,q \in W$. Denoting by $\mathfrak{f}$ and $\mathfrak{t}$ the point sphere map and tangent plane congruence of $C$ with respect to $\mathfrak{p},\mathfrak{q}$, we have that $\xi = -\mathfrak{f}\wedge d\mathfrak{f}$. Now let $r\in W$ such that $r\not\in \langle p,q\rangle$ and write $r(t) = \mathfrak{r} + t(a \mathfrak{f} + b\mathfrak{t})$ for some $\mathfrak{r}\in \mathbb{R}^{3,2}$ and functions $a,b$. The condition that $r$ is a linear conserved quantity amounts to
\[ 0 = da \mathfrak{f} + a d\mathfrak{f} + db \mathfrak{t} + bd\mathfrak{t} - (\mathfrak{f},\mathfrak{r})d\mathfrak{f} + (d\mathfrak{f},\mathfrak{r})\mathfrak{f}.\]
Thus $db=0$, $da +(d\mathfrak{f},\mathfrak{r}) = 0$ and $a - bk - (\mathfrak{f},\mathfrak{r}) =0$. Hence 
\[ (\mathfrak{f},\mathfrak{r}) = \lambda k + \mu\]
for some constants $\lambda,\mu$. It then follows that $(d\mathfrak{t},\mathfrak{r}) = - \lambda kdk$ and thus $(\mathfrak{t},\mathfrak{r}) = -\frac{\lambda}{2} k^{2} + \zeta$ for some constant $\zeta$. Thus
\[ (\mathfrak{t} + \frac{k}{2}\mathfrak{f}, \mathfrak{r})  = \zeta + \mu\frac{k}{2}.\]
Defining $\tilde{\mathfrak{r}}:= \mathfrak{r} + \zeta \mathfrak{p} + \mu \mathfrak{q}$, we have that 
\[ (\mathfrak{t} + \frac{k}{2}\mathfrak{f}, \tilde{\mathfrak{r}}) = 0.\]
Thus by Proposition~\ref{prop:elasticcomplex} we have that $\mathfrak{f}$ is a constrained elastic curve. 
\end{proof}

In light of Theorem~\ref{thm:constrained} we make the following definition:

\begin{definition}
\label{def:conelas}
Let $W_{0}$ be a $3$-dimensional subspace of $\mathbb{R}^{3,2}$. We say that $C:I\to\mathcal{Z}$ is a \textit{constrained elastic curve with respect to $W_{0}$} if there exists a polorarisation $[\xi]$ of $C$ such that $d+t\xi$ admits a $3$-dimensional space $W$ of linear conserved quantities with $W(0)=W_{0}$. 
\end{definition}

The following proposition shows that, for non-circular curves, the spaces of linear conserved quantities of $d+t\xi$ has maximal dimension $3$: 

\begin{proposition}
\label{prop:4lcq}
If $d+t\xi$ admits a $4$-dimensional space of linear conserved quantities then $C$ is circular. 
\end{proposition}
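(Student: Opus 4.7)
The plan is to argue by contradiction: I will assume $C$ is non-circular and that $W$ has dimension $4$, and then derive a contradiction by showing that any ``fourth'' element of $W$ is forced to lie inside a chosen $3$-dimensional subspace $W' \le W$.

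First, I would pick any $3$-dimensional subspace $W' \le W$ and apply Lemma \ref{lem:arcpol2}: after possibly rescaling $\xi$, the polarisation $[\xi]$ is the arclength polarisation of $C$ with respect to $\mathfrak{p} := p(0)$ and $\mathfrak{q} := q(0)$ for some $p, q \in W'$. Let $\mathfrak{f}, \mathfrak{t}, k$ be the induced point sphere map, tangent plane congruence, and curvature. I would extend $\{p, q\}$ to a basis of $W'$ by a third element $r_0$ and, following the normalisation step in the proof of Theorem \ref{thm:constrained}, adjust $r_0$ by a suitable linear combination of $p, q$ so that its constant term $\tilde{\mathfrak{r}}_0$ satisfies $(\mathfrak{t} + \tfrac{k}{2}\mathfrak{f}, \tilde{\mathfrak{r}}_0) = 0$. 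I would then pick any $r \in W \setminus W'$ and similarly normalise it to produce a constant term $\tilde{\mathfrak{r}}$ obeying the same orthogonality.

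The key input is the case analysis in the converse direction of Proposition \ref{prop:elasticcomplex}: for any constant vector $\mathfrak{s} \in \mathbb{R}^{3,2}$ with $(\mathfrak{t} + \tfrac{k}{2}\mathfrak{f}, \mathfrak{s}) = 0$, when expanded in the frame $\{\mathfrak{f}, \mathfrak{t}, \mathfrak{f}', \mathfrak{q}, \mathfrak{p}\}$, the coefficient of $\mathfrak{q}$ has the form $A k$ for a constant $A$; and if $A = 0$ then $\mathfrak{s} \in \Gamma C$ is a constant section of $C$, which together with Lemma \ref{lem:circ} and the standing non-circularity assumption forces $\mathfrak{s} = 0$. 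Applied to $\tilde{\mathfrak{r}}_0$ this yields a constant $A_0$, necessarily non-zero (else $\tilde r_0$ would collapse $W'$ to $\langle p, q \rangle$, contradicting the dimension of $W'$); applied to $\tilde{\mathfrak{r}}$ it yields a constant $A$.

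To close, I would form the constant vector $\mathfrak{s} := A_0 \tilde{\mathfrak{r}} - A \tilde{\mathfrak{r}}_0$. By linearity it satisfies $(\mathfrak{t} + \tfrac{k}{2}\mathfrak{f}, \mathfrak{s}) = 0$, and its $\mathfrak{q}$-coefficient is $A_0 \cdot A k - A \cdot A_0 k = 0$, so the case analysis above forces $\mathfrak{s} = 0$. Unwinding the normalisations, $r(0) - (A/A_0) r_0(0) \in \langle \mathfrak{p}, \mathfrak{q} \rangle$, so the linear conserved quantity $r - (A/A_0) r_0 - c_1 p - c_2 q \in W$ vanishes at $t = 0$ for suitable constants $c_1, c_2$; by Lemma \ref{lem:lcqspan}, non-circularity forces this difference to be zero, putting $r$ in $W'$ and contradicting $r \notin W'$. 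The main obstacle will be carefully tracking how the coefficient of $\mathfrak{q}$ transforms under the normalisation shifts $r \mapsto r + \zeta p + \mu q$ and recognising that the dichotomy of Proposition \ref{prop:elasticcomplex} applies uniformly to $\tilde{\mathfrak{r}}_0$, $\tilde{\mathfrak{r}}$, and the composite vector $\mathfrak{s}$.
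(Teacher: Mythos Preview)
Your argument is correct. Both your approach and the paper's share the same opening move: pick a $3$-dimensional subspace, invoke Lemma~\ref{lem:arcpol2} to identify $[\xi]$ as an arclength polarisation, and then use the normalisation from the proof of Theorem~\ref{thm:constrained} to produce constant vectors orthogonal to $\tilde{c}:=\langle \mathfrak{t}+\tfrac{k}{2}\mathfrak{f}\rangle$. The difference lies in the endgame.

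The paper simply observes that the two constant vectors $\tilde{\mathfrak{r}}_0,\tilde{\mathfrak{r}}$ (its $\mathfrak{r}_1,\mathfrak{r}_2$) are linearly independent---an immediate consequence of Lemma~\ref{lem:lcqspan}, exactly as you argue for $A_0\neq 0$---and then notes that $\tilde{c}\le \langle\tilde{\mathfrak{r}}_0,\tilde{\mathfrak{r}}\rangle^{\perp}$, a constant $3$-dimensional subspace. A single appeal to Lemma~\ref{lem:W0s} then forces $C$ to be circular. Your route instead exploits the finer structure uncovered in Proposition~\ref{prop:elasticcomplex}: the $\mathfrak{q}$-coefficient of any such constant vector is $Ak$, so the combination $\mathfrak{s}=A_0\tilde{\mathfrak{r}}-A\tilde{\mathfrak{r}}_0$ has vanishing $\mathfrak{q}$-coefficient, hence (by that same case analysis together with Lemma~\ref{lem:circ}) $\mathfrak{s}=0$, and Lemma~\ref{lem:lcqspan} then pushes $r$ back into $W'$. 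This is a valid alternative; it trades the clean geometric punchline of Lemma~\ref{lem:W0s} for a more hands-on computation in the moving frame. The paper's version is shorter and more conceptual, but yours has the virtue of being self-contained within the machinery already developed for Theorem~\ref{thm:constrained}, without needing to recall Lemma~\ref{lem:W0s}.
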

\begin{proof}
Suppose that $C$ admits a $4$-dimensional space of linear conserved quantities. If $C$ is non-circular, then as in the proof of Theorem~\ref{thm:constrained}, we can show that there exists two linearly independent vectors $\mathfrak{r}_{1}, \mathfrak{r}_{2}\in \mathbb{R}^{3,2}$ such that 
\[ (\mathfrak{t} + \frac{k}{2}\mathfrak{f}, \mathfrak{r}_{1}) =  (\mathfrak{t} + \frac{k}{2}\mathfrak{f}, \mathfrak{r}_{2}) =0\]
for some space form projection $\mathfrak{f}$ of $C$. Thus $\tilde{c}:= \langle \mathfrak{t} + \frac{k}{2}\mathfrak{f}\rangle$ lives in a constant $3$-dimensional space $W_{0}:= \langle \mathfrak{r}_{1},\mathfrak{r}_{2}\rangle^{\perp}$. But then by Lemma~\ref{lem:W0s}, $C$ is circular, contradicting our assumption that $C$ is non-circular. 
\end{proof}

\section{Surfaces with spherical curvature lines}\label{section4}
In this section we shall be considering the case that $n=2$, and thus Legendre maps $f:\Sigma\to\mathcal{Z}$ project to fronts in space forms, that is, surfaces with admissible singularities. We will investigate the case that $f$ projects to fronts with one or two families of spherical curvature lines, following the approach of Blaschke~\cite[\S 88]{B1929}. We shall begin this section by recalling some facts about curvature lines of Legendre maps. 

Let $f:\Sigma\to\mathcal{Z}$ be a Legendre map. Then, since $f^{(1)}\le f^{\perp}$, we can define a tensor 
\[ \beta: T\Sigma\to \Hom(f,f^{\perp}/f)\quad X\mapsto  (\sigma\mapsto d_{X}\sigma\, mod\, f).\]
Moreover, since $f$ is a Legendre map, we have the condition $\ker \beta = \{0\}$. We can then define a tensor $\confstr \in \Gamma (S^{2}T\Sigma\otimes (\wedge^{2}f)^{*}\otimes \wedge^{2}(f^{\perp}/f))$ by 
\[ \confstr(X,Y)\xi_{1}\wedge \xi_{2} = \frac{1}{2}(\beta(X)\xi_{1}\wedge \beta(Y)\xi_{2} + \beta(Y) \xi_{1}\wedge \beta(X)\xi_{2}),\]
where $X,Y\in \Gamma T\Sigma$ and $\xi_{1},\xi_{2}\in \Gamma f$. Away from points where $\confstr$ vanishes, we can identify $\confstr$ with an indefinite conformal structure on $\Sigma$ whose null vectors are curvature directions of $f$ (see~\cite[Section 2.4.1]{P2020}). Thus, if $T\le T\Sigma$ is a rank $1$ distribution satisfying $\confstr(T,T)=0$, we have that the leaves of $T$ are curvature lines of $f$. We shall denote by $f_{T}$ the set of sections of $f$ and derivatives of sections of $f$ along $T$. 

\begin{lemma}
\label{lem:Tcurv}
Let $T\le T\Sigma$ be a rank $1$ distribution. Then:
\begin{enumerate}
\item $\confstr(T,T)=0$ if and only if $f_{T}$ is a rank $3$ subbundle of $f^{(1)}$. \label{item:fTrank3}
\item If $\confstr(T,T)=0$ then there exists a complementary rank $1$ distribution $\tilde{T}\le T\Sigma$ such that $\confstr(\tilde{T},\tilde{T})=0$ and $f_{T}\perp f_{\tilde{T}}$. 
\end{enumerate}
\end{lemma}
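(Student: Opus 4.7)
My plan is to handle the two assertions separately, both revolving around the linear map $\beta(X)\colon f \to f^{\perp}/f$ for a local generator $X$ of $T$.

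For (1), I would first observe that $f_{T}/f$ is precisely the image of $\beta(X)$, so $\operatorname{rank} f_{T} = 2 + \operatorname{rank} \beta(X)$. The Legendre condition forces $\beta(X) \ne 0$, so $\beta(X)$ has rank $1$ or $2$ and $f_{T}$ has rank $3$ or $4$. The equivalence with $\confstr(T,T)=0$ then follows by identifying $\confstr(X,X)(\sigma \wedge \tilde\sigma)$ with $\beta(X)\sigma \wedge \beta(X)\tilde\sigma \in \wedge^{2}(f^{\perp}/f)$, which is essentially $\det \beta(X)$ and vanishes precisely when $\beta(X)$ has rank at most $1$.

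For (2), working at a non-umbilic point (the umbilic case requires minor separate handling, since there all $\beta(X)$ share a common kernel but a unique $\tilde T$ with the orthogonality can still be picked pointwise from the induced isomorphism $T_{x}\Sigma \to f^{\perp}/f$), the quadratic form $\confstr$ has signature $(1,1)$ on $T_{x}\Sigma$, yielding a unique second null line which I take as $\tilde T$; this is automatically a complementary rank-$1$ distribution with $\confstr(\tilde T, \tilde T)=0$. Since $f$ is null and is contained in both $f_{T}$ and $f_{\tilde T}$, the required orthogonality $f_{T} \perp f_{\tilde T}$ reduces to orthogonality of the rank-$1$ lines $f_{T}/f = \langle \beta(X)\tilde\sigma\rangle$ and $f_{\tilde T}/f = \langle \beta(Y)\sigma\rangle$ in the positive-definite bundle $f^{\perp}/f$, where $X, Y$ span $T, \tilde T$ and $\sigma, \tilde\sigma \in \Gamma f$ are chosen so that $\sigma$ spans $\ker \beta(X)$ (i.e., the curvature sphere of $T$) and $\tilde\sigma$ spans $\ker \beta(Y)$.

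To produce this orthogonality, I would choose $X, Y$ as locally commuting coordinate vector fields and twice-differentiate the null identity $(\sigma, \tilde\sigma)=0$ to obtain
\[
(d_{Y}\sigma, d_{X}\tilde\sigma) + (d_{X}\sigma, d_{Y}\tilde\sigma) = -(d_{Y}d_{X}\sigma, \tilde\sigma) - (\sigma, d_{Y}d_{X}\tilde\sigma).
\]
The term $(d_{X}\sigma, d_{Y}\tilde\sigma)$ vanishes because both vectors lie in the null subbundle $f$. Expanding $d_{X}\sigma$ and $d_{Y}\tilde\sigma$ in the basis $\{\sigma, \tilde\sigma\}$ of $f$ and using $[X,Y]=0$ to interchange $d_{Y}d_{X}\tilde\sigma$ with $d_{X}d_{Y}\tilde\sigma$, each of the two right-hand terms can be shown to vanish by repeated use of the null pairings on $f$ and of the kernel conditions $d_{X}\sigma, d_{Y}\tilde\sigma \in \Gamma f$. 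This forces $(d_{Y}\sigma, d_{X}\tilde\sigma)=0$, which is the desired orthogonality.

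The main obstacle is recognising the correct symmetry to exploit: the content of (2) is really a Codazzi-type interplay between the two curvature directions, and my proof hinges on packaging this as a single twice-differentiated null identity whose four pieces all collapse thanks to the isotropy of $f$ combined with the kernel conditions on $\sigma$ and $\tilde\sigma$. Verifying that $(d_{Y}d_{X}\sigma, \tilde\sigma)$ and $(\sigma, d_{Y}d_{X}\tilde\sigma)$ both vanish is the routine but decisive step.
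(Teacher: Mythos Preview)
Your argument for part~(1) is essentially the paper's: both identify $\confstr(X,X)$ with $\beta(X)\sigma\wedge\beta(X)\tilde\sigma$ and read off that this vanishes precisely when $\beta(X)$ has rank~$1$, i.e., when $f_{T}/f$ is a line.

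For part~(2) you take a genuinely different route. The paper chooses an immersive section $\sigma\in\Gamma f$ (one avoiding the curvature spheres), uses the positive-definite metric $(d\sigma,d\sigma)$ on $T\Sigma$, and \emph{defines} $\tilde T$ as the orthogonal complement of $T$ for this metric. Orthogonality $f_{T}\perp f_{\tilde T}$ is then almost immediate: $d_{\tilde X}\sigma\perp d_{X}\sigma$ by construction, and the remaining cross-term $(d_{X}\sigma,d_{\tilde X}\sigma_{T})$ equals $(d_{\tilde X}\sigma,d_{X}\sigma_{T})=0$ by the symmetry coming from $f^{(1)}\le f^{\perp}$ and $d_{X}\sigma_{T}\in\Gamma f$. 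From $f_{\tilde T}\perp f_{T}$ one then reads off $\operatorname{rank} f_{\tilde T}=3$ and hence $\confstr(\tilde T,\tilde T)=0$. You reverse the logic: take $\tilde T$ to be the second null line of $\confstr$ and then prove orthogonality. Your twice-differentiated identity is correct and is really the same symmetry used by the paper; note, however, that the basis expansion you describe is unnecessary, since $d_{X}\sigma\in\Gamma f$ already gives $d_{Y}d_{X}\sigma\in\Gamma f^{\perp}$ and hence $(d_{Y}d_{X}\sigma,\tilde\sigma)=0$ directly, and similarly for the other term after the commutator swap (which in fact does not even require $[X,Y]=0$, since $d_{[X,Y]}\tilde\sigma\in f^{(1)}\le f^{\perp}$).

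The one place your approach costs something is the umbilic locus: the second null line of $\confstr$ is undefined where $\confstr$ degenerates, so you are forced into a case split and a separate pointwise construction whose smooth compatibility with the non-umbilic definition you do not verify. The paper's construction sidesteps this entirely, since the auxiliary metric $(d\sigma,d\sigma)$ is positive definite and smooth everywhere, yielding a globally smooth $\tilde T$ in one stroke. Your umbilic remedy (pulling back the orthogonal complement of $f_{T}/f$ via the isomorphism $X\mapsto\beta(X)\tilde\sigma$) is in fact exactly the paper's definition, so the cleanest fix is to adopt that definition uniformly from the start and then observe, as a consequence of the orthogonality you establish, that $\tilde T$ coincides with the second null line wherever $\confstr$ is non-degenerate.
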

\begin{proof}
Since $f$ is a Legendre map, one has that $\dim f_{T}(x)\ge 3$ for all $x\in \Sigma$. On the other hand, by definition of $\confstr$, it is straightforward to see that $\confstr(T,T)=0$ if and only if $(f_{T}/f)\wedge (f_{T}/f) =\{0\}$, i.e., $f_{T}/f$ has rank $1$. This proves~\eqref{item:fTrank3}. 

Suppose that $\confstr(T,T)=0$ and thus $f_{T}$ has rank $3$, by~\eqref{item:fTrank3}. Then $s_{T}:= \ker\beta(T)$ is a rank $1$ subbundle of $f$. Fix a nowhere-zero section $\sigma_{T}\in \Gamma s_{T}$. 

Since $f$ is a Legendre map, we may find a section $\sigma\in \Gamma f$ such that $\beta\sigma$ is an immersion. It then follows that $(d\sigma,d\sigma)$ is a positive definite metric on $\Sigma$. Let $\tilde{T}\le T\Sigma$ be orthogonal to $T$ with respect to this metric. We have that 
\[ f_{\tilde{T}} = f + \langle d_{\tilde{X}}\sigma, d_{\tilde{X}}\sigma_{T}\rangle\]
for some nowhere-zero section $\tilde{X}\in \Gamma \tilde{T}$. On the other hand, since $f_{T}$ is rank $3$, we have that 
\[ f_{T} = f\oplus \langle d_{X}\sigma\rangle, \]
for some nowhere-zero section $X\in \Gamma T$. By construction of $\tilde{T}$, we have that $d_{\tilde{X}}\sigma \perp f_{T}$. Using that $f^{(1)}\le f^{\perp}$, we have that 
\[ (d_{X}\sigma, d_{\tilde{X}}\sigma_{T}) = (d_{\tilde{X}}\sigma, d_{X}\sigma_{T})=0,\]
since $\sigma_{T} \in \Gamma \ker \beta(T)$. Hence, $f_{\tilde{T}}\perp f_{T}$ and it follows that $f_{\tilde{T}}$ has rank $3$. Applying~\eqref{item:fTrank3} again implies that $\confstr(\tilde{T},\tilde{T})=0$. 
\end{proof}

\subsection{Spherical curvature lines}

In this paper we are interested in surfaces in space forms with one or two families of spherical curvature lines. In order to express this notion, we break symmetry to a conformal subgeometry. Let $\mathfrak{p}\in \mathbb{R}^{4,2}$ be a timelike point sphere complex for a conformal Riemannian geometry $\langle \mathfrak{p}\rangle^{\perp}\cong \mathbb{R}^{4,1}$. Let $\Lambda:= f\cap\langle \mathfrak{p}\rangle^{\perp}$ be the point sphere map of $f$ with respect to this conformal geometry. Suppose that $T\le T\Sigma$ is a rank $1$ distribution satisfying $\confstr(T,T)=0$ and thus the leaves of $T$ are a family of curvature lines of $\Lambda$. The condition that the leaves of $T$ be spherical curvature lines then amounts to $\Lambda \perp \nu$ for some smooth map $\nu:\Sigma\to \mathbb{P}(\mathcal{L})$ that is constant along the leaves of $T$. Recall from Subsection~\ref{subsec:symbreak} that we obtain the conformal representative of $\nu$ from $S:= \pi_{\mathfrak{p}}(\nu)$, where $\pi_{\mathfrak{p}}$ denotes the projection of $\mathbb{R}^{4,2}$ onto $\langle \mathfrak{p}\rangle^{\perp}$. 

\begin{remark}
\label{rem:transversal}
Throughout this paper, we will only be considering the case that the spheres defined by $\nu$ intersect $\Lambda$ transversally. In particular, this implies that $\nu\cap f^{\perp} = \{0\}$. 
\end{remark}

\begin{theorem}
\label{thm:Lsph}
There exists a spacelike rank $1$ subbundle $L$ of $f^{\perp}$ that is constant along the leaves of a rank $1$ distribution $T\le T\Sigma$ if and only if, where the projection immerses, $f$ projects to a surface with spherical curvature lines along the leaves of $T$ in some (and in fact, any) conformal Riemannian geometry.
\end{theorem}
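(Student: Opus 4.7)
The plan is to break the Lie sphere symmetry using a timelike point sphere complex $\mathfrak{p}$, and to set up a correspondence between the spacelike rank-$1$ subbundle $L\le f^{\perp}$ and the lightlike $\nu$ with $\nu\perp \Lambda$ through the rank-$2$ subbundle $V:=L\oplus\langle\mathfrak{p}\rangle=\nu\oplus\langle\mathfrak{p}\rangle$, which has signature $(1,1)$; its two null lines are precisely the two oriented lifts in $\mathbb{P}(\mathcal{L})$ of the unoriented sphere $\pi_{\mathfrak{p}}(L)$ in the conformal geometry.

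For the forward direction, define $\nu$ to be either of the two null lines of $V=L\oplus\langle\mathfrak{p}\rangle$. Since $\Lambda\le f$ and $L\perp f$, while $\Lambda\perp \mathfrak{p}$, we have $V\perp\Lambda$ and hence $\nu\perp\Lambda$. Parallelism of $V$ along $T$ is inherited from that of $L$ and $\langle\mathfrak{p}\rangle$, and since the null lines of a rank-$2$ subbundle of signature $(1,1)$ are algebraically determined by the induced metric, they are themselves parallel along $T$. To confirm that $T$ is genuinely a curvature direction, I would differentiate $L\perp f$ along $X\in \Gamma T$ using $d_X L\subseteq L$ to obtain $L\perp f_T$; the fact that $L$ is spacelike while $f$ is null then forces $\dim f_T=3$, and Lemma~\ref{lem:Tcurv} yields $\confstr(T,T)=0$.

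For the converse, given such a $\nu$, I would set $L:=(\nu\oplus\langle\mathfrak{p}\rangle)\cap f^{\perp}$. That $L$ has rank $1$ follows from a dimension count inside $\Lambda^{\perp}$: we have $\nu\oplus\langle\mathfrak{p}\rangle\le \Lambda^{\perp}$ and $f^{\perp}\le \Lambda^{\perp}$, and the transversality $\nu\cap f^{\perp}=\{0\}$ together with $\mathfrak{p}\notin f^{\perp}$ (which holds wherever the projection immerses, since otherwise $\Lambda=f$) force the sum to exhaust $\Lambda^{\perp}$, giving $\dim L=4+2-5=1$. Transversality applied to both null lines of $V$ ensures $L$ is non-null.

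The technical core lies in the converse: showing $L$ is spacelike and parallel along $T$. For the signature, a direct parameter analysis on the Lorentzian plane $V$, combined with transversality and the fact that $\nu$ and $\langle\mathfrak{p}\rangle$ span only $(1,1)$ of the ambient signature, should pin down spacelike rather than timelike. For parallelism, choosing a parallel section $n\in \Gamma\nu$ (so $d_X n=0$ for $X\in \Gamma T$) and a normalised complement $\mu\in \Gamma f$ with $(\mu,\mathfrak{p})=-1$, the line $L$ is spanned by $\ell=n+(n,\mu)\mathfrak{p}$, and parallelism reduces to the vanishing of $(n,d_X\mu)$ for $X\in \Gamma T$. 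This identity is the delicate point: it should follow by combining the consequence $(n,d_X\lambda)=0$ of differentiating $\nu\perp \Lambda$ with the fact (forced by spherical curvature lines) that $T$ is a curvature direction, so that $d_X\lambda$ and $d_X\mu$ have collinear images in the rank-$1$ bundle $f_T/f$, together with a careful choice of $\mu$ adapted to the curvature sphere $s_T\le f$ of the direction $T$.
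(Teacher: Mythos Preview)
Your proof is correct, and the forward direction matches the paper's essentially verbatim. In the converse, your route is more computational than the paper's: you choose explicit sections, write $\ell=n+(n,\mu)\mathfrak{p}$ with $n\in\Gamma\nu$ parallel along $T$ and $(\mu,\mathfrak{p})=-1$, and reduce parallelism of $L$ to the identity $(n,d_X\mu)=0$. Your sketch of this identity is sound --- since $(\mu,\mathfrak{p})$ and $(\lambda,\mathfrak{p})$ are constant, $d_X\mu$ and $d_X\lambda$ both lie in $\langle\mathfrak{p}\rangle^{\perp}$, and collinearity of their images in the rank-$1$ quotient $f_T/f$ forces $d_X\mu-c\,d_X\lambda\in f\cap\langle\mathfrak{p}\rangle^{\perp}=\Lambda$, whence $(n,d_X\mu)=c\,(n,d_X\lambda)=0$. (Your ``careful choice of $\mu$ adapted to $s_T$'' is in fact superfluous: any $\mu$ with $(\mu,\mathfrak{p})=-1$ works by this reasoning.) By contrast, the paper argues at the bundle level without ever choosing sections: from the immersion of $\Lambda$ it writes $f_T=f+\Lambda_T$, observes $\Lambda_T\perp W$ because $W=\nu\oplus\langle\mathfrak{p}\rangle$ is constant along $T$, deduces $L_T\perp f$ and $L_T\le W$, and concludes $L_T\le W\cap f^{\perp}=L$. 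The paper's argument is shorter and more conceptual; yours makes the vanishing of the $\mathfrak{p}$-component of $d_X\ell$ completely explicit. For the spacelike claim you leave somewhat open, the quickest justification is that $L\cap f=\{0\}$ (transversality rules out $L$ coinciding with either null line of $V$, and these are the only lines of $V$ meeting $f$) together with the fact that $f^{\perp}/f$ is positive definite.
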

\begin{proof}
Suppose that a spacelike rank $1$ subbundle $L\le f^{\perp}$ is constant along the leaves of $T\le T\Sigma$. Then $f_{T}\perp L$, and thus $f_{T}$ is a rank $3$ subbundle of $f^{(1)}$. From Lemma~\ref{lem:Tcurv} we have that the leaves of $T$ are curvature lines of $f$. Choose a timelike point sphere complex $\mathfrak{p}\in \mathbb{R}^{4,2}$. Then $W:= L\oplus\langle \mathfrak{p}\rangle$ is rank $2$ bundle with signature $(1,1)$. Let $\nu^{\pm}$ denote the two rank $1$ null subbundles of $W$. Then $\nu^{\pm}$ are constant along the leaves of $T$. Moreover, defining $\Lambda:= f\cap\langle\mathfrak{p}\rangle^{\perp}$ to be the point sphere map of $f$ with respect to $\mathfrak{p}$, we have that $\Lambda\perp \nu^{\pm}$. Thus $\Lambda$ has spherical curvature lines along the leaves of $T$. 

Conversely, assume that the point sphere map $\Lambda$ with respect to some conformal geometry defined by timelike point sphere complex $\mathfrak{p}\in \mathbb{R}^{4,2}$ immerses and has spherical curvature lines along the leaves of a rank $1$ distribution $T\le T\Sigma$. Thus $\Lambda \perp \nu$ for some $\nu:\Sigma\to \mathbb{P}(\mathcal{L})$ that is constant along the leaves of $T$. Then let $W:= \nu \oplus \langle\mathfrak{p}\rangle$. Now $W\perp \Lambda$ and, since $\mathfrak{p}$ is timelike, we have that $L:= f^{\perp}\cap W$ is a rank $1$ spacelike subbundle of $f^{\perp}$. Since the leaves of $T$ are curvature lines and $\Lambda$ is an immersion, we have that $f_{T} = f + \Lambda_{T}$, where $\Lambda_{T}$ denotes the set of sections of $\Lambda$ and derivatives of sections of $\Lambda$ along $T$. Since $\Lambda\perp W$ and $W$ is constant along $T$, we learn that $\Lambda_{T}\perp W$. Hence $L_{T}\perp f$ and, since $L_{T}\le W$, we deduce that $L$ is constant along the leaves of $T$. 
\end{proof}

\begin{remark}
Suppose that $\mathfrak{p}$ is a timelike point sphere complex for a Riemannian conformal geometry $\langle \mathfrak{p}\rangle^{\perp}$. Then from Subsection~\ref{subsec:linsphcom}, we have that $S:= \pi_{\mathfrak{p}}(L)$ yields the conformal representative of the spheres containing the curvature lines of $f$ in Theorem~\ref{thm:Lsph}. 
\end{remark}

\begin{remark}
Notice that Theorem~\ref{thm:Lsph} implies that even though we broke symmetry to a conformal geometry to express the condition of having spherical curvature lines, it is in fact a Lie invariant notion. 
\end{remark}

\subsection{Surfaces with spherical curvature lines via Lie spherical evolutions}
\label{subsec:sphevo}

Denote by $\mathcal{S}$ the Grassmannian of $1$-dimensional spacelike subspaces of $\mathbb{R}^{4,2}$. Then rank $1$ spacelike subbundles of $\underline{\mathbb{R}}^{4,2}$ can be viewed as maps $\Sigma\to \mathcal{S}$. Given a map $L:\Sigma\to \mathcal{S}$, we may split the trivial bundle as
	\[
		\Sigma \times \mathbb{R}^{4,2} = L \oplus L^\perp,
	\]
inducing a splitting of the trivial connection
	\[
		d = \mathcal{D}^{L} + \mathcal{N}^{L}
	\]
where $\mathcal{D}^{L}$ is the sum of the induced connections on $L$ and $L^\perp$, and $\mathcal{N}^{L} \in \Omega^1 (L \wedge L^\perp)$.

Suppose that $f : \Sigma \to \mathcal{Z}$ admits spherical curvature lines along the leaves of a rank $1$ distribution $T\le T\Sigma$. Thus, by Theorem~\ref{thm:Lsph}, there exists a rank $1$ spacelike subbundle $L\le f^{\perp}$ that is constant along the leaves of $T$. We then have that $\mathcal{N}^{L}(T)=0$, from which one deduces that $\mathcal{D}^{L}$ is a flat metric connection. By Lemma~\ref{lem:Tcurv} there exists a complementary rank $1$ null subbundle $\tilde{T}$ of the conformal structure $\confstr$ with $f_{T}\perp f_{\tilde{T}}$. Since $f\perp L$, we have that $f_{T}\perp L$ and thus $f_{\tilde{T}} = f\oplus L$. Hence, $f$ is a parallel subbundle of $\mathcal{D}^{L}|_{\tilde{T}}$. We thus arrive at the following lemma: 

\begin{lemma}
\label{lem:DLflat}
$\mathcal{D}^{L}$ is a flat metric connection and $f$ is a parallel subbundle of $\mathcal{D}^{L}|_{\tilde{T}}$.
\end{lemma}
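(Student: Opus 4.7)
The plan is to read off both conclusions from the decomposition $d = \mathcal{D}^L + \mathcal{N}^L$ once we establish that $\mathcal{N}^L(T) = 0$. This vanishing is immediate: $L$ and its orthogonal complement $L^\perp$ are both constant along the leaves of $T$, so for $\ell \in \Gamma L$ and $X \in \Gamma T$ one has $d_X \ell \in \Gamma L$ (and symmetrically for sections of $L^\perp$), forcing the off-diagonal part $\mathcal{N}^L(X)$ of $d_X$ to vanish.

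To obtain flatness of $\mathcal{D}^L$, I would expand the flatness of the trivial connection as
\begin{equation*}
0 \;=\; R^d \;=\; R^{\mathcal{D}^L} + d^{\mathcal{D}^L}\mathcal{N}^L + \mathcal{N}^L \wedge \mathcal{N}^L.
\end{equation*}
The curvature $R^{\mathcal{D}^L}$ preserves the splitting $L \oplus L^\perp$, and so does $\mathcal{N}^L \wedge \mathcal{N}^L$, because the bracket of two elements of the off-diagonal subspace $L \wedge L^\perp \subset \mathfrak{o}(4,2)$ lies in the block-diagonal piece $\mathfrak{o}(L) \oplus \mathfrak{o}(L^\perp)$; by contrast $d^{\mathcal{D}^L}\mathcal{N}^L$ stays off-diagonal. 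Hence the block-diagonal and off-diagonal parts of $R^d$ vanish separately, yielding
\begin{equation*}
R^{\mathcal{D}^L} + \mathcal{N}^L \wedge \mathcal{N}^L \;=\; 0.
\end{equation*}
Since $\Sigma$ is two-dimensional with $T\Sigma = T \oplus \tilde{T}$, any $2$-form is determined by its value on a local frame $(X, \tilde{X})$ with $X \in \Gamma T$, and $\mathcal{N}^L(X) = 0$ immediately gives $(\mathcal{N}^L \wedge \mathcal{N}^L)(X, \tilde{X}) = 0$. Hence $R^{\mathcal{D}^L} = 0$; metric-compatibility is automatic because $\mathcal{D}^L$ preserves the orthogonal splitting $L \oplus L^\perp$ and $d$ is metric.

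For the parallelism statement, I use $f \le L^\perp$ (which follows from $L \le f^\perp$) together with the relation $f_{\tilde{T}} = f \oplus L$ obtained from Lemma~\ref{lem:Tcurv} and the orthogonality $f \perp L$. Given $\sigma \in \Gamma f$ and $\tilde{X} \in \Gamma \tilde{T}$, the vector $d_{\tilde{X}}\sigma$ lies in $f \oplus L \subseteq L^\perp \oplus L$; decomposing it as $\mathcal{D}^L_{\tilde{X}}\sigma + \mathcal{N}^L(\tilde{X})\sigma$ forces the $L^\perp$-component $\mathcal{D}^L_{\tilde{X}}\sigma$ to lie in $f$, as desired.

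I do not expect a substantial obstacle here. The only subtle ingredient is the algebraic observation that the bracket of two off-diagonal elements of $\mathfrak{o}(4,2)$ lands in the block-diagonal part, which combined with $\mathcal{N}^L(T) = 0$ and $\dim \Sigma = 2$ kills $\mathcal{N}^L \wedge \mathcal{N}^L$ outright and yields flatness of $\mathcal{D}^L$ cleanly; the rest is bookkeeping with the splitting.
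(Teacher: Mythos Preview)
Your argument is correct and matches the paper's approach: the paper also deduces $\mathcal{N}^{L}(T)=0$ from constancy of $L$ along $T$, infers flatness of $\mathcal{D}^{L}$ from this (you have simply spelled out the curvature decomposition that the paper leaves implicit), and then obtains parallelism of $f$ from $f_{\tilde{T}} = f\oplus L$. One small remark: the identity $f_{\tilde{T}} = f\oplus L$ needs, besides Lemma~\ref{lem:Tcurv} and $f\perp L$, the constancy of $L$ along $T$ to get $f_{T}\perp L$ and hence $f\oplus L = f_{T}^{\perp} = f_{\tilde{T}}$; you have this ingredient already, so the gap is only expository.
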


Locally we may restrict $f$ to a rectangular domain $\Sigma = I\times \tilde{I}$ where the parameter lines along $I$ are the leaves of $T$ and the parameter lines of $\tilde{I}$ are the leaves of $\tilde{T}$. Thus, $f$ is parametrised by curvature lines $(u,v)$ with $u\in I$ and $v\in \tilde{I}$. Since $L$ is constant along the leaves of $T$, we may view it as a map from $\tilde{I}\to \mathcal{S}$. 

Fixing a point $v_{0}\in \tilde{I}$, there exists an orthogonal trivialising gauge transformation $A : \tilde{I} \to \Ortho(4,2)$ of $\mathcal{D}^{L}$ such that
	\begin{equation}
	\label{eqn:sphevo}
		A \cdot \mathcal{D}^{L} = d \quad \text{and}\quad A(v_{0}) = id. 
	\end{equation}
Note that the construction of $A$ is completely independent of $f$ and we can make the following definition for an arbitrary map $L:\tilde{I}\to \mathcal{S}$: 

\begin{definition}\label{def:sem}
Given a map $L:\tilde{I}\to \mathcal{S}$ and $v_{0}\in\tilde{I}$, we call $A : \tilde{I} \to \Ortho(4,2)$ satisfying~\eqref{eqn:sphevo} the \textit{spherical evolution map along $L$ based at $v_{0}$}.   
\end{definition}

Clearly $L$ is a parallel subbundle of $\mathcal{D}^{L}$. Thus, with $L_{0}:= L(v_{0})$, we have that $L = A^{-1} L_{0}$. 
Since $f$ is a parallel subbundle of $\mathcal{D}^{L}|_{\tilde{T}}$ and $f\perp L$, there exists $C:I\to \mathcal{Z}$ with $C\perp L_{0}$ such that $f = A^{-1} \, C$. Since $f$ is a Legendre map, we have that $C$ is a Legendre curve when viewed as a map into $L_{0}^{\perp}\cong \mathbb{R}^{3,2}$. 

Conversely, suppose that $C:I\to \mathcal{Z}$ is a Legendre curve in $L_{0}^{\perp}$ and define $f:= A^{-1}C:I\times \tilde{I}\to\mathcal{Z}$. We have that $f_{T} = A^{-1}C^{(1)}$ and thus $f_{T}$ is a rank $3$ subbundle of $f^{\perp}$. Now $f\perp L$ and $f$ is parallel for $D^{L}|_{\tilde{T}}$, thus $f_{\tilde{T}}\le f\oplus L\le f^{\perp}$. In order that $f$ be a Legendre map, we need to have that $f_{\tilde{T}} = f\oplus L$. Since $A\cdot \mathcal{D}^{L} = d$ and $\mathcal{D}^{L} = d - \mathcal{N}^{L}$, it follows that 
\begin{equation} 
\label{eqn:A-1gauge}
A^{-1}\cdot (d + Ad_{A}\mathcal{N}^{L}) = d.
\end{equation}
One then deduces that 
\[f_{\tilde{T}} = f + A^{-1}(Ad_{A}\mathcal{N}^{L}(\tilde{T}))C.\]
Therefore, in order that $f_{\tilde{T}}$ has rank $3$, we require that $(Ad_{A}\mathcal{N}^{L})|_{C}$ nowhere vanishes\footnote{This condition is akin to the regularity condition required to construct Monge surfaces, see for example~\cite{BG2018}.}. Once this condition is established, it follows from Lemma~\ref{lem:Tcurv} that the parameter lines of $I$ and $\tilde{I}$ are curvature lines of $f$, and from Theorem~\ref{thm:Lsph} it follows that the $I$ parameter lines are spherical curvature lines. 

We have thus arrived at the following theorem, illustrated in Figure \ref{fig:first}: 

\begin{theorem}\label{thm:A}
Let $L:\tilde{I}\to \mathcal{S}$ be an immersion, set $L_{0}:= L(v_{0})$ for some $v_{0}\in \tilde{I}$ and denote by $A$ the spherical evolution map along $L$ based at $v_{0}$. Let $C:I\to \mathcal{Z}$ be a Legendre curve in $L_{0}^{\perp}\cong \mathbb{R}^{3,2}$ such that $(Ad_{A}\mathcal{N}^{L})|_{C}$ is nowhere zero. Then 
\[ f:I\times \tilde{I}\to \mathcal{Z}, \quad f(u,v) = A^{-1}(v)C(u)\]
is a Legendre map parametrised by curvature lines for which the $u$-parameter lines are spherical. 

Conversely, any surface with one family of spherical curvature lines may locally be written in this way. 
\end{theorem}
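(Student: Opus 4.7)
The bulk of the argument has essentially been set up in the paragraphs preceding the statement, so the plan is to assemble those pieces cleanly and then verify the Legendre conditions. For the forward direction, I would set $f := A^{-1}C$ and check each condition in turn. Since $A$ depends only on $v$, differentiation along $T$ commutes with $A^{-1}$, so $f_T = A^{-1}C^{(1)}$, which is a rank $3$ subbundle of $f^\perp$ (using that $C$ is a Legendre curve into $L_0^\perp$ and that $A^{-1}$ preserves the bilinear form). For $f_{\tilde T}$, I would use the identity $A^{-1}\cdot (d + \operatorname{Ad}_A \mathcal{N}^L) = d$ to compute derivatives in $v$, obtaining
\[
 f_{\tilde T} = f + A^{-1}\bigl((\operatorname{Ad}_A \mathcal{N}^L)(\tilde T)\bigr) C.
\]
Because $\mathcal{N}^L \in \Omega^1(L\wedge L^\perp)$, conjugation by $A$ produces a form valued in $L_0 \wedge L_0^\perp$, and applied to $C\le L_0^\perp$ this lands in $L_0$. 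Thus $f_{\tilde T} \le f + A^{-1}L_0 = f\oplus L \le f^\perp$. The hypothesis that $(\operatorname{Ad}_A\mathcal{N}^L)|_C$ is nowhere zero guarantees this inclusion is an equality with rank $3$, whence $f$ is a Legendre map with $\ker\beta = \{0\}$.

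It then remains to identify the $u$- and $v$-parameter lines as curvature lines, and to show sphericality of the $u$-lines. Because $f_T$ and $f_{\tilde T}$ are both rank $3$, Lemma~\ref{lem:Tcurv}\eqref{item:fTrank3} gives $\confstr(T,T) = \confstr(\tilde T, \tilde T) = 0$, so both parameter line families are curvature lines. Since $L = A^{-1}L_0$ depends only on $v$, $L$ is a spacelike rank $1$ subbundle of $f^\perp$ that is constant along the leaves of $T$, and Theorem~\ref{thm:Lsph} delivers the spherical property of the $u$-curvature lines.

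For the converse, I would follow the construction already outlined in the subsection. Start from a Legendre map $f$ with a family of spherical curvature lines along a rank $1$ distribution $T \le T\Sigma$. Theorem~\ref{thm:Lsph} furnishes a spacelike rank $1$ $L \le f^\perp$ constant along $T$, and Lemma~\ref{lem:DLflat} then shows $\mathcal{D}^L$ is a flat metric connection with $f$ parallel along $\tilde T$. Locally restrict to a rectangular domain $I\times\tilde I$ with $u$-lines the leaves of $T$ and $v$-lines the leaves of $\tilde T$; since $L$ is constant on $T$-leaves it descends to a map $L:\tilde I\to \mathcal{S}$. Fixing $v_0\in\tilde I$, let $A$ be the spherical evolution map of Definition~\ref{def:sem}. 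Setting $C := A f$, the relation $A\cdot\mathcal{D}^L = d$ together with the $\tilde T$-parallelism of $f$ shows $C$ is independent of $v$, hence descends to $C:I\to\mathcal{Z}$. Since $L = A^{-1}L_0$ and $f\perp L$, we get $C\perp L_0$, so $C$ may be viewed as a Legendre curve in $L_0^\perp\cong\mathbb{R}^{3,2}$; finally $f = A^{-1}C$ by construction, completing the description.

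The only genuinely delicate point is the rank computation of $f_{\tilde T}$ in the forward direction: unlike $f_T$, the derivative in $v$ is not tautologically a rank $3$ object, and the nondegeneracy of $f$ as a Legendre map is exactly the content of the non-vanishing hypothesis $(\operatorname{Ad}_A\mathcal{N}^L)|_C\neq 0$. Everything else is a direct application of Theorem~\ref{thm:Lsph}, Lemma~\ref{lem:Tcurv}, and Lemma~\ref{lem:DLflat}.
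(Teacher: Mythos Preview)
Your proposal is correct and follows essentially the same route as the paper: the argument is indeed already laid out in the paragraphs preceding the theorem, and you have assembled the pieces (Theorem~\ref{thm:Lsph}, Lemma~\ref{lem:Tcurv}, Lemma~\ref{lem:DLflat}, and the gauge identity~\eqref{eqn:A-1gauge}) in the same order and with the same key computation of $f_{\tilde T}$. The only point you leave implicit, as does the paper, is that in the converse the non-vanishing of $(\operatorname{Ad}_A\mathcal{N}^L)|_C$ is automatic because $f$ is already a Legendre map, hence $f_{\tilde T}$ has rank~$3$.
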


%\begin{figure}
%	\centering
%	\includegraphics[width=0.98\textwidth]{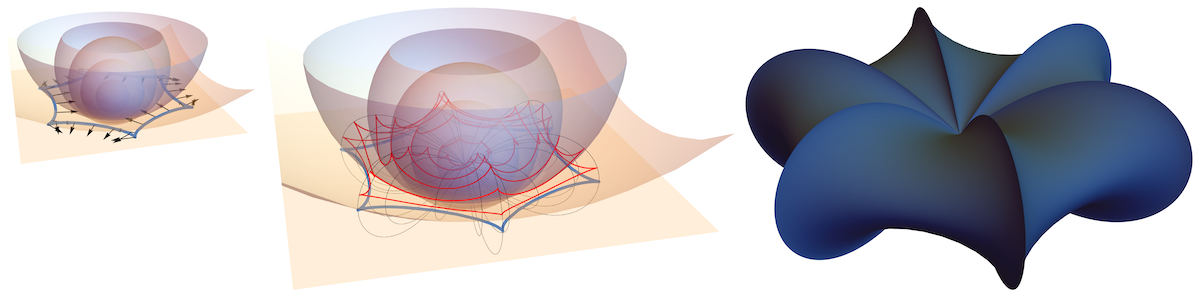}
%	\caption{Given a Legendre curve and a curve of elliptic linear sphere complexes (on the left), one can recover the spherical evolution map and evolve the given curve (in the middle). In this example, one obtains the well-known constant negative Gaussian curvature torus (on the right).}
%\end{figure}

\begin{figure}
	\centering
	\includegraphics[width=0.98\textwidth]{images/kTorusEvol.png}
	\includegraphics[width=0.98\textwidth]{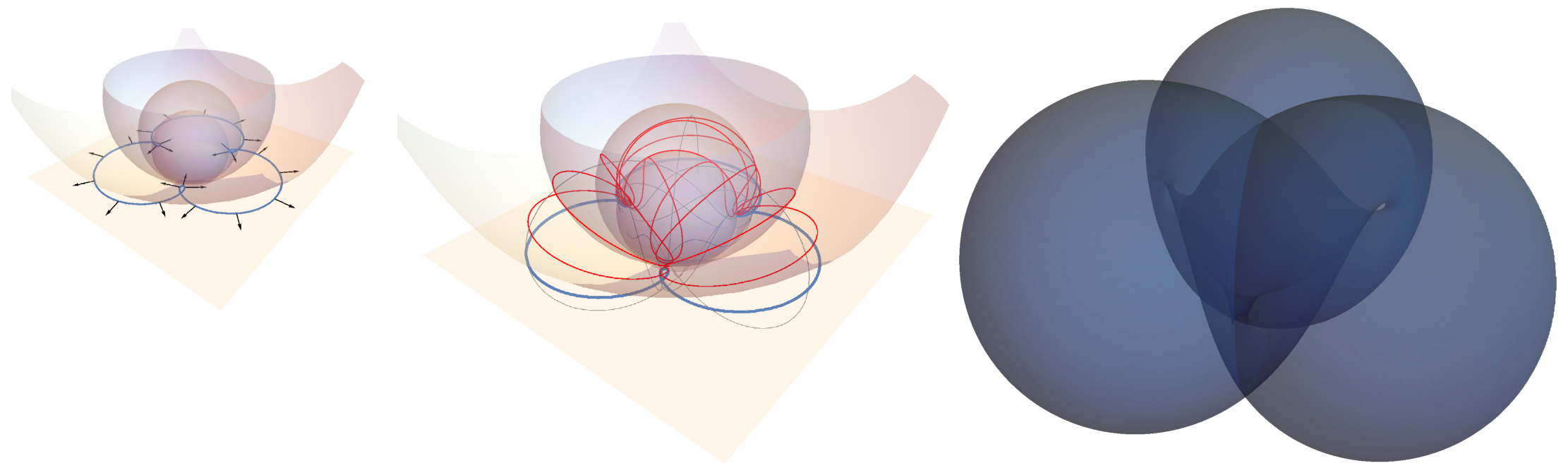}
	\caption{Given a curve of spheres with prescribed angles of intersection, i.e., a curve of elliptic sphere complexes, one can obtain the spherical evolution map and evolve an initial appropriate Legendre curve.
		In these examples, one obtains either a well-known constant negative Gaussian curvature torus (top) (see, for example, \cite{E1868, melko_integrable_1994, ura_constant_2018}), or the Wente torus (bottom) (see, for example, \cite{A1987, W1987, W1986}).}
	\label{fig:first}
\end{figure}

\begin{remark}
For a different choice of initial point $\tilde{v}_{0}\in \tilde{I}$, the spherical evolution map $\tilde{A}$ along $L$ based at $\tilde{v}_{0}$ differs from $A$ by $\tilde{A} = BA$ for some constant $B\in \Ortho(4,2)$. We then have that $\tilde{C}:=BC \perp \tilde{L}_{0}$ with $\tilde{L}_{0} = L(\tilde{v}_{0})$ and the resulting Legendre map 
\[ \tilde{f} = \tilde{A}^{-1}\tilde{C} = A^{-1}B^{-1}BC = A^{-1}C = f.\]
In this sense, the construction in Theorem~\ref{thm:A} is independent of the choice of $v_{0}$, 
\end{remark}

\begin{remark}
As noted by Blaschke~\cite{B1929}, Theorem~\ref{thm:A} implies that if one family of curvature lines is spherical then any two members of this family are related by a Lie sphere transformation. 
\end{remark}

\begin{remark}
Recall from Remark~\ref{rem:transversal} that we excluded the case that the spheres containing curvature lines are tangential to the surface. Without this restriction, $L$ could become lightlike, resulting in singular points of the corresponding spherical evolution maps $A$.  
\end{remark}

\begin{remark}
Suppose that $c$ is the curvature circle congruence of $C$ (viewed as a Legendre curve in $L_{0}^{\perp}\cong \mathbb{R}^{3,2}$). As a map into $\mathbb{R}^{4,2}$, $c$ yields a $1$-parameter family of curvature spheres along the initial spherical curve of $f$. One then obtains a curvature sphere congruence by parallel transporting $c$ along $L$, i.e., $s:= A^{-1}c$ is a curvature sphere congruence of $f=A^{-1}C$. 
\end{remark}

\subsection{Osculating sphere complexes}
In this subsection we shall consider the case that $f:\Sigma\to\mathcal{Z}$ is an umbilic-free Legendre map. We then recover the results of~\cite[\S 88]{B1929}, defining the osculating sphere complexes of such an $f$ and characterising the presence of spherical curvature lines in terms of these. 

Assume that $f:\Sigma\to\mathcal{Z}$ is an umbilic-free Legendre map i.e., there are two curvature sphere congruences $s_{1},s_{2}\le f$ with $s_{1}\cap s_{2}=\{0\}$. Writing the corresponding curvature subbundles as $T_{1}:= T_{s_{1}}$ and $T_{2}:= T_{s_{2}}$, we have that 
\[ d_{X}\sigma_{1}, d_{Y}\sigma_{2}\in \Gamma f\]
for any $\sigma_{1}\in \Gamma s_{1}$, $\sigma_{2}\in \Gamma s_{2}$, $X\in \Gamma T_{1}$ and $Y\in \Gamma T_{2}$. 

If for some $i\in\{1,2\}$, $s_{i}$ is constant along the leaves of $T_{i}$, i.e., $d_{X_{i}}\sigma_{i} \in \Gamma s_{i}$, then $f$ is a \textit{channel surface} and the curvature lines along $T_{i}$ are circular (see~\cite{PS2018}). Whereas if both of the curvature sphere congruences immerse then we say that $f$ is \textit{regular}. 

\begin{remark}
\label{rem:tubular}
If $s_{i}\perp \mathfrak{q}$ for some non-zero $\mathfrak{q}\in \mathbb{R}^{4,2}$ then $f$ is a channel surface and in fact projects to a tube over a curve in space forms (see~\cite{BHR2012}). 
\end{remark}

For $f_{i}:= f_{T_{i}}$ we have that
\[ f_{1} = f\oplus \langle d_{X}\sigma_{2} \rangle \quad \text{and}\quad  f_{2} = f\oplus \langle d_{Y}\sigma_{1} \rangle \]
for non-zero sections $\sigma_{1}\in \Gamma s_{1}$, $\sigma_{2}\in \Gamma s_{2}$, $X\in \Gamma T_{1}$ and $Y\in \Gamma T_{2}$. These form rank 3 subbundles with $f_{1}\perp f_{2}$ and we may write the derived bundle as $f^{(1)} = f_{1}+f_{2}$. Moreover the rank 1 bundles $f_{i}/f$ inherit a positive definite metric from $\mathbb{R}^{4,2}$. 

We may split the trivial bundle as 
	\[
		\underline{\mathbb{R}}^{4,2} = S_1 \oplus_\perp S_2
	\]
where
\begin{equation}
\label{eqn:liecyc}
		S_1 = \langle \sigma_1, d_Y \sigma_1, d_Y d_Y \sigma_1 \rangle \quad\text{and}\quad
		 S_2 = \langle \sigma_2, d_X \sigma_2,d_X d_X \sigma_2 \rangle
\end{equation}
for $\sigma _1 \in \Gamma s_1$, $\sigma_2 \in \Gamma s_2$, $X \in \Gamma T_1$, and $Y \in \Gamma T_2$. This is called the \textit{Lie cyclide splitting}. Geometrically this represents a congruence of Dupin cyclides that make most contact with $f$ along its curvature lines. We may then split the trivial connection as
	\[
		d = \mathcal{D} + \mathcal{N}
	\]
where $\mathcal{D}$ is the sum of the induced connections on $S_1$ and $S_2$ and $\mathcal{N} \in \Omega^{1}(S_{1}\wedge S_{2})$.
By~\cite[Corollary 3.6]{PS2018}, a Legendre map parametrises a channel surface if and only if the Lie cyclides are constant along the leaves of one of the curvature subbundles $T_{i}$, i.e., $\mathcal{N}(T_{i})=0$. 

If $f$ is regular then, since the curvature spheres immerse, we have that $\mathcal{N}(T_{1})s_{1} = s_{2} $ and $\mathcal{N}(T_{2})s_{2}= s_{1}$. By~\eqref{eqn:liecyc} we have that
	\[
		d_Y \sigma_1, d_Y d_Y \sigma_1 \in \Gamma S_{1}\quad\text{and}\quad d_X \sigma_2, d_X d_X \sigma_2 \in \Gamma S_{2}.
	\]
Since $S_{1}\cap f^{\perp} =\langle \sigma_{1},d_{Y}\sigma_{1}\rangle$ and $S_{2}\cap f^{\perp} = \langle \sigma_{2},d_{X}\sigma_{2}\rangle$, it follows that 
\begin{equation}
\label{eqn:kerN}
 \ker(\mathcal{N}(T_{2})|_{S_{1}}) = S_{1}\cap f^{\perp} \quad \text{and}\quad  \ker (\mathcal{N}(T_{1}) |_{S_{2}}) = S_{2}\cap f^{\perp}.
 \end{equation}
Hence $\mathcal{N}(T_{1})$ and $\mathcal{N}(T_{2})$ are decomposable subbundles of $S_{1}\wedge S_{2}$. We can then define rank 1 subbundles $L_{i}\le S_{i}$ by
\begin{equation}
\label{eqn:Li}
 L_{1} := \ker(\mathcal{N}(T_{1})) \cap S_{1}\cap f^{\perp} \quad \text{and}\quad L_{2} := \ker(\mathcal{N}(T_{2})) \cap  S_{2}\cap f^{\perp}.
\end{equation}
Since $\mathcal{N}(T_{1})s_{2}=s_{1}$ and $\mathcal{N}(T_{2})s_{1}=s_{2}$, we must have that $L_{i}\cap s_{i} = \{0\}$. Thus the $L_{i}$ are spacelike. \eqref{eqn:kerN} and \eqref{eqn:Li} together imply that the $L_{i}$ are the unique\footnote{If $f$ is a channel surface enveloping the sphere curve $s_{i}$ then the uniqueness of $L_{i}$ fails. However, one of the $L_{i}$ may still be uniquely determined as long as the corresponding curvature sphere $s_{i}$ is an immersion.} subbundles of $S_{i}$ satisfying $\mathcal{N}L_{i}\equiv 0$. We have thus arrived at the following lemma: 

\begin{lemma}
\label{lem:osc}
Suppose that $f$ is regular. Then for each $i\in \{1,2\}$, there exists a unique spacelike rank 1 subbundle $L_{i}\le S_{i}$ such that $\mathcal{N}L_{i} \equiv 0$. Moreover, $f\perp L_{i}$ and $f_{1} = f\oplus L_{2}$ and $f_{2}=f\oplus L_{1}$. 
\end{lemma}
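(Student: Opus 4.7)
The plan is to verify that the bundles $L_1$ and $L_2$ constructed just before the lemma in equation~\eqref{eqn:Li} already fulfill all four asserted properties: rank one, spacelike, uniqueness, and the bundle decompositions. The symmetry $1\leftrightarrow 2$ of the setup reduces the work to the $L_1$ case, with the $L_2$ statements following by exchanging the roles of $(S_1, T_1, \sigma_1, X)$ and $(S_2, T_2, \sigma_2, Y)$.

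For the rank-one property, I would exploit \eqref{eqn:kerN} to observe that $\mathcal{N}(T_2) L_1 = 0$ is automatic once $L_1 \subseteq S_1 \cap f^\perp$, reducing the task to computing $\ker \mathcal{N}(T_1)|_{S_1 \cap f^\perp}$. On the rank-two bundle $\langle \sigma_1, d_Y \sigma_1\rangle$, the regularity hypothesis gives $\mathcal{N}(T_1)\sigma_1 \in \Gamma s_2 \setminus \{0\}$. The key calculation is that $\mathcal{N}(T_1)(d_Y \sigma_1) \in \Gamma s_2$: one uses the commutator identity $d_X d_Y \sigma_1 = d_Y d_X \sigma_1$ in curvature coordinates, writes $d_X\sigma_1 = \mu_1 \sigma_1 + \mu_2 \sigma_2$ since $T_1 = T_{s_1}$, and then invokes the Lie cyclide inclusions $d_Y \sigma_1 \in \Gamma S_1$ and $d_Y\sigma_2 \in \Gamma f$ together with $S_i \cap f = s_i$ to identify all the $S_2$-contributions. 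The image of $\mathcal{N}(T_1)|_{S_1 \cap f^\perp}$ therefore lies in the rank-one bundle $s_2$ and is nonzero on $\sigma_1$, so its kernel is rank one, and this kernel is precisely $L_1$.

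To finish, $L_1 \subseteq f^\perp$ is built into the definition, giving $f \perp L_1$. Also $L_1 \cap f = 0$: the intersection $L_1 \cap s_1$ is zero since $\mathcal{N}(T_1)$ is injective on $s_1$, while $L_1 \cap s_2 = 0$ because $L_1 \subseteq S_1$ and $S_1 \cap S_2 = \{0\}$. Consequently $L_1$ maps to a nonzero line in $f_2/f$, which carries a positive-definite metric inherited from $\mathbb{R}^{4,2}$, so $L_1$ is spacelike; and a rank count $\dim(f\oplus L_1) = 3 = \dim f_2$ combined with $f + L_1 \subseteq f_2$ yields $f_2 = f \oplus L_1$. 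For uniqueness, any rank-one $L_1' \le S_1$ with $\mathcal{N} L_1' \equiv 0$ satisfies $L_1' \subseteq \ker \mathcal{N}(T_2)|_{S_1} = S_1 \cap f^\perp$ by \eqref{eqn:kerN} and then $L_1' \subseteq \ker \mathcal{N}(T_1)|_{S_1 \cap f^\perp}$, which is the rank-one bundle $L_1$; hence $L_1' = L_1$, and the spacelike hypothesis is actually redundant.

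The main obstacle is the calculation $\mathcal{N}(T_1)(d_Y \sigma_1) \in \Gamma s_2$. It is not immediate from the Lie cyclide splitting alone: one must track how the cross term $\mu_2 d_Y \sigma_2$ in $d_Y d_X\sigma_1$ splits across $S_1 \oplus S_2$ via $f = s_1 \oplus s_2$, and crucially use that $\mathcal{N}(T_1)$ on $S_1 \cap f^\perp$ cannot scatter into the higher-order piece $\langle d_X d_X \sigma_2\rangle$ of $S_2$. Once this containment is in hand, the remaining verifications are routine linear-algebraic bookkeeping.
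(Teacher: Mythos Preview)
Your argument is correct and reaches all four conclusions, but the route to the rank-one property differs from the paper's. You establish $\mathcal{N}(T_1)(d_Y\sigma_1)\in\Gamma s_2$ by a direct coordinate computation: pass to commuting curvature directions, write $d_Xd_Y\sigma_1=d_Yd_X\sigma_1$, expand $d_X\sigma_1=\mu_1\sigma_1+\mu_2\sigma_2$, and track the $S_2$-components term by term. The paper instead observes that $\mathcal{N}(T_1)\in\Gamma(S_1\wedge S_2)$ is \emph{decomposable}: by \eqref{eqn:kerN} the restriction $\mathcal{N}(T_1)|_{S_2}$ has the rank-two kernel $S_2\cap f^\perp$, hence rank one; skew-symmetry then forces $\mathcal{N}(T_1)|_{S_1}$ to have rank one as well, so $\ker(\mathcal{N}(T_1)|_{S_1})$ is a rank-two subbundle of $S_1$ which meets $S_1\cap f^\perp$ in a line (the two planes are distinct because $s_1$ lies in the second but not the first). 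This dissolves what you identify as ``the main obstacle'' without any commutator calculation, at the cost of invoking the linear-algebraic fact that a nonzero element of $S_1\wedge S_2$ has equal ranks on the two summands. Your approach is more hands-on and avoids that fact; the paper's is shorter and coordinate-free. The remaining steps---spacelikeness from $L_1\cap s_1=\{0\}$ inside the degenerate plane $\langle\sigma_1,d_Y\sigma_1\rangle$, uniqueness from the two kernel conditions, and $f_2=f\oplus L_1$ by a dimension count---agree with the paper, and your remark that the spacelike hypothesis is redundant for uniqueness is correct.
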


\begin{remark}
\label{rem:formLi}
Suppose that $(u,v)$ are curvature line coordinates for $f$ with $\frac{\partial}{\partial u}\in \Gamma T_{1}$ and $\frac{\partial}{\partial v}\in \Gamma T_{2}$. We may choose lifts $\sigma_{1}\in \Gamma s_{1}$, $\sigma_{2}\in \Gamma s_{2}$ such that 
\[ \sigma_{1,u} = \beta \sigma_{2} \quad \text{and}\quad \sigma_{2,v} = \gamma \sigma_{2}\]
for some smooth functions $\beta$ and $\gamma$. The condition that $f$ is regular is equivalent to assuming that $\beta$ and $\gamma$ are nowhere zero. We then have that  
\[ L_{1} = \langle \beta \sigma_{1,v} -  \beta_{v}\sigma_{1}\rangle\quad \text{and} \quad L_{2} = \langle \gamma \sigma_{2,u}- \gamma_{u}\sigma_{2}\rangle. \]
\end{remark}

Since the $L_{i}$ are spacelike, they form two congruences of elliptic linear sphere complexes $E_{L_{i}}$ and, since $f\perp L_{i}$, we have that $f\subset E_{L_{i}}$, that is, at each point $x\in \Sigma$, the spheres contained in $f(x)$ belong to the linear sphere complex $E_{L_{i}(x)}$. This leads us to the following definition:

\begin{definition}
$L_{1}$ and $L_{2}$ are called the \textit{osculating sphere complexes of $f$}. 
\end{definition}

\begin{remark}
One can show that $L_{i}$ is perpendicular to $f$ and derivatives of sections of $f$ along $T_{i}$ to the third order. We can thus interpret $L_{i}$ as the unique linear sphere complex containing the spheres of $4$ infinitesimally close contact elements of $f$ along the curvature lines $T_{i}$. 
\end{remark}

\begin{lemma}
\label{lem:Ldist}
Suppose that $L\le f^{\perp}$ is a rank $1$ spacelike subbundle of $\underline{\mathbb{R}}^{4,2}$ that is constant along the leaves of a rank $1$ distribution $T\le T\Sigma$. Then either $L = L_{1}$ and $T=T_{1}$ or $L= L_{2}$ and $T=T_{2}$. 
\end{lemma}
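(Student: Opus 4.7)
The plan is to verify that $L$ coincides with one of the osculating sphere complexes $L_{1}$ or $L_{2}$ by checking the defining conditions~\eqref{eqn:Li}. First, since $L\le f^{\perp}$ is a spacelike rank~$1$ subbundle constant along the leaves of $T$, the argument at the start of the proof of Theorem~\ref{thm:Lsph} combined with Lemma~\ref{lem:Tcurv} shows that $\confstr(T,T)=0$, so $T$ is a curvature direction; since $f$ is regular (assumed in this subsection for the $L_{i}$ to be well-defined), $T=T_{1}$ or $T=T_{2}$. Without loss of generality I take $T=T_{1}$ and aim for $L=L_{1}$. Next, for a local section $\lambda\in\Gamma L$, differentiating $(\lambda,\sigma)=0$ along $X\in\Gamma T_{1}$ and using $d_{X}\lambda\in\Gamma L\le\Gamma f^{\perp}$ yields $(\lambda,d_{X}\sigma)=0$ for every $\sigma\in\Gamma f$. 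Hence $\lambda\perp f_{1}=f\oplus L_{2}$ by Lemma~\ref{lem:osc}, so in particular $\lambda\perp L_{2}$; combined with $\lambda\in\Gamma f^{\perp}=\Gamma(f+L_{1}+L_{2})$ this gives $\lambda\in\Gamma(f+L_{1})$.

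The delicate step is to upgrade this to $L\le S_{1}$. Writing $\lambda=\alpha\sigma_{1}+\beta\sigma_{2}+\gamma\ell_{1}$ for local sections $\sigma_{i}\in\Gamma s_{i}$ and $\ell_{1}\in\Gamma L_{1}$, the $S_{2}$-component of $\lambda$ in the splitting $\underline{\mathbb{R}}^{4,2}=S_{1}\oplus_{\perp}S_{2}$ is $\beta\sigma_{2}$, which has trivial $L_{2}$-part. Since $L$ is constant along $T_{1}$, we have $d_{X}\lambda=\mu\lambda$ for some function $\mu$, so the $L_{2}$-part of $d_{X}\lambda$ must also vanish. I would then examine each term: $\alpha\,d_{X}\sigma_{1}$ lies in $\Gamma f$ by the curvature condition along $T_{1}$ and hence has no $L_{2}$-part; $\gamma\,d_{X}\ell_{1}$ lies in $\Gamma S_{1}$ since $\mathcal{N}L_{1}=0$, and hence has no $L_{2}$-part; while $\beta\,d_{X}\sigma_{2}$ lies in $\Gamma f_{1}=\Gamma(f\oplus L_{2})$ with a nowhere-zero $L_{2}$-part, as otherwise $f_{1}$ would collapse to $f$, contradicting Lemma~\ref{lem:Tcurv}. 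The vanishing of the $L_{2}$-component of $d_{X}\lambda$ then forces $\beta\equiv0$, so $\lambda\in\Gamma(s_{1}+L_{1})\le\Gamma S_{1}$.

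With $L\le S_{1}$ established, constancy of $L$ along $T_{1}$ gives $d_{X}\lambda\in\Gamma L\le\Gamma S_{1}$, whence $\mathcal{N}(X)\lambda=(d_{X}\lambda)_{S_{2}}=0$ for all $X\in\Gamma T_{1}$. Combined with $L\le S_{1}\cap f^{\perp}$, this yields $L\le\ker(\mathcal{N}(T_{1}))\cap S_{1}\cap f^{\perp}=L_{1}$ via~\eqref{eqn:Li}, and equality $L=L_{1}$ follows from both subbundles having rank~$1$. The main obstacle is the $L_{2}$-component computation in the second paragraph: isolating $\beta(d_{X}\sigma_{2})_{L_{2}}$ as the unique nontrivial contribution to the $L_{2}$-part of $d_{X}\lambda$ crucially requires both that $s_{1}$ is a curvature sphere along $T_{1}$ and that $L_{1}$ is parallel for the split connection $\mathcal{D}$, and it is precisely the combined strength of these two features of the Lie cyclide splitting that makes the argument go through.
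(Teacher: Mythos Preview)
Your argument is correct, but it takes a more laborious route than the paper to establish the key inclusion $L\le S_{1}$. The paper observes directly that $S_{2}=\langle \sigma_{2}, d_{X}\sigma_{2}, d_{X}d_{X}\sigma_{2}\rangle$ for $X\in\Gamma T_{1}$; since $L$ is constant along $T_{1}$ and $L\perp f_{1}$, one has $L\perp\sigma_{2}$, $L\perp d_{X}\sigma_{2}$, and then differentiating once more gives $L\perp d_{X}d_{X}\sigma_{2}$. Hence $L\perp S_{2}$, i.e.\ $L\le S_{1}$, in three lines. You instead decompose $\lambda=\alpha\sigma_{1}+\beta\sigma_{2}+\gamma\ell_{1}$ and track the $L_{2}$-component of $d_{X}\lambda$ term by term, using that $\mathcal{N}L_{1}=0$ to control $d_{X}\ell_{1}$ and regularity to ensure the $L_{2}$-part of $d_{X}\sigma_{2}$ is nowhere zero. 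This works, and has the minor virtue of making explicit exactly which structural facts about the Lie cyclide splitting are being used; but the paper's orthogonality argument is shorter and avoids choosing a basis altogether. Both proofs then finish identically: once $L\le S_{1}\cap f^{\perp}$ and $L$ is constant along $T_{1}$, the defining property~\eqref{eqn:Li} (equivalently, the uniqueness in Lemma~\ref{lem:osc}) forces $L=L_{1}$.
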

\begin{proof}
Let $f_{T}$ denote the set of sections of $f$ and derivatives of sections of $f$ along $T$. Since $L$ is constant along $T$, it follows that $f_{T}\perp L$. Thus $f_{T}$ is a rank $3$ subbundle of $f^{(1)}$. By Lemma~\ref{lem:Tcurv} we have that $T$ is either $T_{1}$ or $T_{2}$. Without loss of generality, assume that $T = T_{1}$. By differentiating $s_{2}$ along $T_{1}$, we deduce that $L\perp S_{2}$. Thus $L\le S_{1}\cap f^{\perp}$ and, since $L$ is constant along $T_{1}$, Lemma~\ref{lem:osc} implies that $L = L_{1}$. 
\end{proof}

Using Lemma~\ref{lem:Ldist} and Theorem~\ref{thm:Lsph}, we can recover the characterisation of Blaschke for umbilic-free surfaces that have spherical curvature lines in terms of the osculating sphere complexes: 

\begin{proposition}
\label{prop:sphLi}
	The leaves of $T_i$ are spherical curvature lines of $f$ if and only if the osculating complex $L_i$ is constant along the leaves of $T_i$.
\end{proposition}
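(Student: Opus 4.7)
The plan is to deduce this proposition directly from Theorem~\ref{thm:Lsph} and Lemma~\ref{lem:Ldist}, since both essentially encode the needed content: Theorem~\ref{thm:Lsph} equates the existence of a constant spacelike rank $1$ subbundle $L\le f^{\perp}$ along leaves of $T$ with the spherical curvature line property, and Lemma~\ref{lem:Ldist} identifies any such $(L,T)$ with $(L_i,T_i)$ for some $i\in\{1,2\}$.

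For the forward direction, assume the leaves of $T_i$ are spherical curvature lines of $f$. By Theorem~\ref{thm:Lsph}, there exists a spacelike rank $1$ subbundle $L\le f^{\perp}$ that is constant along the leaves of $T_i$. Then Lemma~\ref{lem:Ldist} forces either $(L,T)=(L_1,T_1)$ or $(L,T)=(L_2,T_2)$; since $T=T_i$, we obtain $L=L_i$, and in particular $L_i$ is constant along the leaves of $T_i$.

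For the converse, assume $L_i$ is constant along the leaves of $T_i$. By Lemma~\ref{lem:osc}, $L_i$ is a spacelike rank $1$ subbundle of $f^{\perp}$, so we may apply Theorem~\ref{thm:Lsph} with this $L_i$ and the distribution $T_i$ to conclude that the leaves of $T_i$ are spherical curvature lines of $f$ (in any conformal Riemannian subgeometry, wherever the point sphere map immerses).

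There is really no obstacle here: the technical work has already been done in establishing Lemma~\ref{lem:osc} (uniqueness of $L_i$ as a spacelike subbundle of $S_i\cap f^{\perp}$ annihilated by $\mathcal{N}$), Lemma~\ref{lem:Ldist} (any constant spacelike rank $1$ subbundle of $f^{\perp}$ along a curvature distribution must coincide with one of the osculating complexes), and Theorem~\ref{thm:Lsph} (the Lie sphere geometric characterisation of spherical curvature lines). The proposition is the natural assembly of these three ingredients in the umbilic-free regular setting.
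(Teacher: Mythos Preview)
Your proof is correct and follows essentially the same approach as the paper: both directions are obtained by combining Theorem~\ref{thm:Lsph} with Lemma~\ref{lem:Ldist}, with Lemma~\ref{lem:osc} supplying that $L_i$ is spacelike and contained in $f^{\perp}$. The only cosmetic difference is that the paper presents the two implications in the opposite order.
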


\begin{proof}
	Suppose that $L_i$ is constant along the leaves of $T_i$. Then by Theorem~\ref{thm:Lsph} $f$ has spherical curvature lines along the leaves of $T_{i}$. 
	
	Conversely assume, without loss of generality, that the leaves of $T_i$ are spherical curvature lines of $f$. Then by Theorem~\ref{thm:Lsph}, there exists a rank $1$ spacelike subbundle $L\le f^{\perp}$ that is constant along the leaves of $T_{i}$. By Lemma~\ref{lem:Ldist}, it then follows that $L=L_{i}$. 
\end{proof}

An immediate corollary to Proposition~\ref{prop:sphLi} is the following: 

\begin{corollary}
All of the curvature lines of $f$ are spherical if and only if the osculating complexes $L_{1}$ and $L_{2}$ are constant along the leaves of $T_{1}$ and $T_{2}$, respectively. 
\end{corollary}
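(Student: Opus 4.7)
The plan is to apply Proposition~\ref{prop:sphLi} separately to each of the two curvature subbundles $T_{1}$ and $T_{2}$. The hypothesis that all curvature lines of $f$ are spherical means precisely that the leaves of $T_{1}$ are spherical curvature lines \emph{and} that the leaves of $T_{2}$ are spherical curvature lines, since regularity (umbilic-freeness) gives exactly the two curvature distributions $T_{1},T_{2}$ whose leaves exhaust the curvature lines of $f$.

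For the forward implication, I would first assume that all curvature lines of $f$ are spherical. Applying Proposition~\ref{prop:sphLi} with $i=1$, the fact that the leaves of $T_{1}$ are spherical curvature lines yields that $L_{1}$ is constant along the leaves of $T_{1}$. Then applying Proposition~\ref{prop:sphLi} with $i=2$, the same reasoning gives that $L_{2}$ is constant along the leaves of $T_{2}$.

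For the reverse implication, I would assume that $L_{i}$ is constant along the leaves of $T_{i}$ for each $i\in\{1,2\}$. Another application of Proposition~\ref{prop:sphLi}, once with $i=1$ and once with $i=2$, produces that the leaves of both $T_{1}$ and $T_{2}$ are spherical curvature lines. Since every curvature line of $f$ is a leaf of $T_{1}$ or of $T_{2}$ (as $f$ is umbilic-free, by the standing assumption of the subsection), this establishes that every curvature line of $f$ is spherical.

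There is no genuine obstacle here; the corollary is simply the conjunction of two independent instances of Proposition~\ref{prop:sphLi}, and the only point worth flagging is that the implicit umbilic-freeness assumption guarantees that the two distributions $T_{1}$ and $T_{2}$ together account for all curvature directions, so that the bi-family statement really is equivalent to the two single-family statements taken simultaneously.
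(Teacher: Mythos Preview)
Your proof is correct and matches the paper's approach: the corollary is stated there as an \emph{immediate} consequence of Proposition~\ref{prop:sphLi}, with no separate argument given, and your application of that proposition to each $i\in\{1,2\}$ under the standing umbilic-free hypothesis is exactly the intended reasoning.
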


\begin{remark}
\label{rem:betagamma}
In terms of the special lifts $\sigma_{1},\sigma_{2}$ of the curvature spheres given in Remark~\ref{rem:formLi}, one can deduce that $f$ has spherical curvature lines along the leaves of $T_{1}$ or $T_{2}$ if and only 
\[ (\ln \beta)_{uv} - \beta\gamma =0 \quad\text{or}\quad (\ln \gamma)_{uv} - \beta\gamma =0,\]
respectively.  
\end{remark}

Let $l_{i}\in \Gamma L_{i}$ be unit spacelike sections of $L_{i}$. By Lemma~\ref{lem:osc} we have that $l_{i}\perp f_{i}$ and $\mathcal{N}L_{i}\equiv 0$. It thus follows that 
\begin{equation}
\label{eqn:dTili}
d|_{T_{i}}l_{i}\in \Gamma( T_{i}^{*}\otimes s_{i}).
\end{equation}
From Proposition~\ref{prop:sphLi} we then have that the leaves of $T_{i}$ are spherical curvature lines if and only if $d|_{T_{i}}l_{i} = 0$. 

Fix nowhere-zero sections $X\in \Gamma T_{1}$ and $Y\in \Gamma T_{2}$ and define 
\[ \mathcal{H}_{1}:= \langle l_{1}, d_{Y}l_{1}, d_{Y}d_{Y}l_{1}\rangle \quad \text{and}\quad \mathcal{H}_{2}:= \langle l_{2},d_{X}l_{2},d_{X}d_{X}l_{2}\rangle.\]
Note that $\mathcal{H}_{i}$ are independent of choices. Blaschke~\cite{B1929} showed that these are rank $3$ subbundles of $\underline{\mathbb{R}}^{4,2}$ and $\mathcal{H}_{1}\perp \mathcal{H}_{2}$. It is not always true that $\mathcal{H}_{1}\cap \mathcal{H}_{2} \neq \{0\}$. For example, we shall see in the next subsection that if $f$ has two families of planar curvature lines in some Euclidean geometry with space form vector $\mathfrak{q}$, then we have that $\mathfrak{q}\in \mathcal{H}_{1}\cap \mathcal{H}_{2}$.

\begin{definition}
We call $\mathcal{H}_{i}$ the \textit{osculating bundles} of $f$. 
\end{definition}

Clearly if $L_{i}$ is constant along $T_{i}$ then $\mathcal{H}_{i}$ is constant along $T_{i}$. Conversely, if $\mathcal{H}_{i}$ is constant along $T_{i}$ then $d|_{T_{i}}l_{i} \in\Gamma T_{i}^{*}\otimes \mathcal{H}_{i}$. On the other hand, \eqref{eqn:dTili} implies that $d|_{T_{i}}l_{i}\in \Gamma( T_{i}^{*}\otimes s_{i})$. One deduces that we must then have that $d|_{T_{i}}l_{i} = 0$, as otherwise $\mathcal{H}_{i} = S_{i}$ is constant along $T_{i}$ implying that $f$ is a channel surface. We thus recover the following result: 

\begin{proposition}
\label{prop:Hiconst}
Suppose that $f$ is regular. Then the leaves of $T_{i}$ are spherical curvature lines of $f$ if and only if $\mathcal{H}_{i}$ is constant along $T_{i}$. In particular, both families of curvature lines of $f$ are spherical if and only if $\mathcal{H}_{1}$ and $\mathcal{H}_{2}$ are constant.
\end{proposition}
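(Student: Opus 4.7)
The plan is to reduce the equivalence to the statement $d|_{T_i} l_i \equiv 0 \iff \mathcal{H}_i$ constant along $T_i$. Indeed, since $l_i$ is a unit spacelike section of the rank-$1$ bundle $L_i$, we have $(d_X l_i, l_i) = 0$ for $X \in \Gamma T_i$, so $L_i$ is constant along $T_i$ precisely when $d|_{T_i} l_i = 0$. Combined with Proposition~\ref{prop:sphLi}, this is the same as saying the leaves of $T_i$ are spherical curvature lines. So the proposition amounts to the claimed equivalence for $d|_{T_i} l_i$.

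For the forward direction, suppose $d|_{T_i} l_i = 0$. Work in commuting curvature-line coordinates $X = \partial_u \in \Gamma T_i$, $Y = \partial_v \in \Gamma T_{3-i}$, so that $[X,Y] = 0$. Then each generator $d_Y^k l_i$ of $\mathcal{H}_i$ satisfies $d_X d_Y^k l_i = d_Y^k d_X l_i = 0$, so $\mathcal{H}_i$ is constant along $T_i$.

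For the converse, suppose $\mathcal{H}_i$ is constant along $T_i$. Then $d|_{T_i} l_i \in \Gamma(T_i^* \otimes \mathcal{H}_i)$, while \eqref{eqn:dTili} gives $d|_{T_i} l_i \in \Gamma(T_i^* \otimes s_i)$, so $d|_{T_i} l_i \in \Gamma(T_i^* \otimes (\mathcal{H}_i \cap s_i))$. If $d|_{T_i} l_i \not\equiv 0$, then at some point $x_0$ we have $s_i(x_0) \subseteq \mathcal{H}_i(x_0)$. Combined with $l_i \in \mathcal{H}_i$ and the rank-$3$ assumption on $\mathcal{H}_i$, one checks that $\mathcal{H}_i = S_i$ in a neighbourhood of $x_0$; but then the hypothesis that $\mathcal{H}_i$ is constant along $T_i$ would force $S_i$ to be constant along $T_i$, which by \cite[Corollary 3.6]{PS2018} makes $f$ a channel surface, contradicting regularity. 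Hence $d|_{T_i} l_i \equiv 0$ as required, and the final statement about both families follows by applying the result to $i=1$ and $i=2$ simultaneously.

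The main obstacle I anticipate is the implication $s_i \subseteq \mathcal{H}_i \Rightarrow \mathcal{H}_i = S_i$. A priori, since $d_Y l_i$ has a non-trivial $S_{3-i}$-component coming from $\mathcal{N}(Y) l_i$, the bundle $\mathcal{H}_i$ need not sit inside $S_i$ at all. To overcome this, the cleanest route is to exploit the explicit formula for $L_i$ from Remark~\ref{rem:formLi} (e.g.\ $L_1 = \langle \beta \sigma_{1,v} - \beta_v \sigma_1 \rangle$) together with $\mathcal{N} L_i \equiv 0$ and the compatibility condition of Remark~\ref{rem:betagamma}, and directly expand $d_Y l_i$ and $d_Y d_Y l_i$ in the Lie cyclide frame to see that, once $s_i \subset \mathcal{H}_i$ holds, the only rank-$3$ subbundle containing $l_i$, $s_i$, and the $Y$-derivatives consistent with $\mathcal{H}_i$ being $T_i$-parallel is $S_i$ itself.
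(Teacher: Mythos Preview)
Your overall strategy matches the paper's proof: reduce to showing $d|_{T_i}l_i=0 \Leftrightarrow \mathcal{H}_i$ constant along $T_i$, use~\eqref{eqn:dTili} for the converse to get $d|_{T_i}l_i\in\Gamma(T_i^*\otimes(s_i\cap\mathcal{H}_i))$, and argue that a nonzero value forces $\mathcal{H}_i=S_i$, contradicting regularity. The forward direction and the final deduction are fine.

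Your anticipated obstacle, however, rests on a misreading. You worry that $d_Yl_i$ has an $S_{3-i}$-component ``coming from $\mathcal{N}(Y)l_i$'', but Lemma~\ref{lem:osc} says precisely that $\mathcal{N}L_i\equiv 0$, so $\mathcal{N}(Y)l_i=0$ and hence $d_Yl_i=\mathcal{D}_Yl_i\in\Gamma S_i$. (It is only $d_Yd_Yl_i$ that may leave $S_i$, and you do not need it.) This makes the implication $s_i\le\mathcal{H}_i\Rightarrow\mathcal{H}_i=S_i$ immediate: you now have $l_i,\ d_Yl_i,\ s_i$ all in $S_i\cap\mathcal{H}_i$, and they are linearly independent because $l_i$ is unit spacelike with $(d_Yl_i,l_i)=0$, while $(d_Yl_i,\sigma_i)=-(l_i,d_Y\sigma_i)\neq 0$ since $d_Y\sigma_i$ has a nontrivial $L_i$-component in $f_{3-i}=f\oplus L_i$ (this is exactly regularity, cf.\ Remark~\ref{rem:formLi}). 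Thus $S_i\cap\mathcal{H}_i$ has rank $3$, so $S_i=\mathcal{H}_i$. Your proposed detour through the explicit Lie-cyclide frame and Remark~\ref{rem:betagamma} would also work, but it is unnecessary once you use $\mathcal{N}L_i=0$ as stated rather than only as part of the fix.
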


In the next subsection we shall see that surfaces with two families of spherical curvature lines can be characterised in Euclidean space forms by choosing appropriate point sphere complexes and space form vectors in the constant spaces $\mathcal{H}_{1}$ and $\mathcal{H}_{2}$.

\subsection{Symmetry breaking} 
\label{subsec:sphsymbreak}
In this subsection we shall examine some specific cases of spherical curvature lines that belong to subgeometries of Lie sphere geometry. 

Consider the case that $L_{i}\perp \mathfrak{q}$ for some non-zero $\mathfrak{q}\in \mathbb{R}^{4,2}$. It then follows that $d|_{T_{i}}l_{i}\perp \mathfrak{q}$. Since $f$ is regular, Remark~\ref{rem:tubular} implies that $d|_{T_{i}}l_{i} = 0$, and thus the leaves of $T_{i}$ are spherical curvature lines of $f$. Using Remark~\ref{rem:spcomplex} one can deduce the following proposition:

\begin{proposition}
\label{prop:planorth}
The leaves of $T_{i}$ project to planar curvature lines in any space form with space form vector $\mathfrak{q}$ if and only if $L_{i}\perp \mathfrak{q}$ . 

The leaves of $T_{i}$ are orthogonally intersected spherical curvature lines in a Riemannian conformal geometry with timelike point sphere complex $\mathfrak{p}$ if and only if $L_{i}\perp \mathfrak{p}$. 
\end{proposition}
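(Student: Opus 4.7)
The plan is to combine the argument given in the paragraph preceding the proposition with the two statements of Remark~\ref{rem:spcomplex}. The preamble already delivers one half of both equivalences: whenever $L_i \perp \mathfrak{v}$ for a non-zero vector $\mathfrak{v} \in \mathbb{R}^{4,2}$ with $s_i \not\perp \mathfrak{v}$ (the latter holding automatically for $\mathfrak{v} = \mathfrak{p}$ or $\mathfrak{v} = \mathfrak{q}$ since $f$ is regular, by Remark~\ref{rem:tubular}), one concludes $d|_{T_i} l_i = 0$, so that $L_i$ is constant along $T_i$ and, by Proposition~\ref{prop:sphLi}, the leaves of $T_i$ are spherical curvature lines.

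For the first equivalence, I would apply this with $\mathfrak{v} = \mathfrak{q}$ and then use the second half of Remark~\ref{rem:spcomplex}: since $L_i \perp \mathfrak{q}$, the sphere $\pi_\mathfrak{p}(L_i)$ containing each spherical curvature line represents a hyperplane in $\mathfrak{Q}^3$, hence the curvature lines are planar. For the converse, planar curvature lines are in particular spherical, so $L_i$ is constant along $T_i$ and $\pi_\mathfrak{p}(L_i)$ represents the containing planes; by the same part of Remark~\ref{rem:spcomplex}, these being planes in $\mathfrak{Q}^3$ forces $L_i \perp \mathfrak{q}$.

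For the second equivalence, I would apply the preamble with $\mathfrak{v} = \mathfrak{p}$ to obtain spherical curvature lines, and then invoke the first half of Remark~\ref{rem:spcomplex}: when $L_i \perp \mathfrak{p}$, every sphere in $E_{L_i}$ intersects the sphere $\pi_\mathfrak{p}(L_i) = L_i$ orthogonally. Since $f \perp L_i$ we have $f \subset E_{L_i}$; in particular the tangent plane of the surface (an element of $f$, once an auxiliary space form vector is chosen) meets the containing sphere orthogonally, which is precisely the condition that the surface intersects the sphere at right angles along the curvature line. Conversely, assuming orthogonal intersection along (necessarily spherical) curvature lines, the tangent plane $\mathfrak{t}$ and the sphere $\pi_\mathfrak{p}(L_i)$ are orthogonal as M\"obius spheres; a short computation exploiting $(\mathfrak{t}, l_i) = 0$ and the normalization $(\mathfrak{t}, \mathfrak{p}) = -1$ gives $(\pi_\mathfrak{p}(\mathfrak{t}), \pi_\mathfrak{p}(l_i)) = -(l_i, \mathfrak{p})$, so that the orthogonality condition translates precisely to $L_i \perp \mathfrak{p}$.

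The main subtlety is the M\"obius-geometric translation of ``surface meets sphere orthogonally'' into the vanishing of the M\"obius inner product used in Remark~\ref{rem:spcomplex}; this is the content of the elementary computation indicated above, and once it is in place both directions of both equivalences follow immediately from the remark, the preamble, and Proposition~\ref{prop:sphLi}.
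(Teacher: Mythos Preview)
Your proposal is correct and follows essentially the same approach as the paper: the paper presents only the preamble argument (that $L_i\perp\mathfrak{q}$ forces $d|_{T_i}l_i=0$ via~\eqref{eqn:dTili} and Remark~\ref{rem:tubular}) and then simply states that the proposition ``can be deduced using Remark~\ref{rem:spcomplex}'', leaving the details to the reader. You have filled in precisely those details---in particular the converse directions and the short computation translating orthogonal intersection into $(\pi_{\mathfrak{p}}(\mathfrak{t}),\pi_{\mathfrak{p}}(l_i))=-(l_i,\mathfrak{p})$---so your write-up is a faithful expansion of what the paper intends.
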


Recall from Theorem~\ref{thm:A} that we may locally write a surface with spherical curvature lines as $f= A^{-1}C:I\times \tilde{I}\to \mathcal{Z}$ where $A$ is the spherical evolution map along $L:\tilde{I}\to \mathcal{S}$ based at $v_{0}\in \tilde{I}$. In the case that $L\perp \mathfrak{q}$ for some non-zero $\mathfrak{q}\in \mathbb{R}^{4,2}$, we have that $A \mathfrak{q} = \mathfrak{q}$. Using Subsection~\ref{subsec:symbreak} we deduce the following corollary: 

\begin{corollary}
\label{cor:evoplanorth}
If $f$ has one family of planar curvature lines, then we may locally write $f=A^{-1}C$ where $A$ is a $1$-parameter family of Laguerre transformations. 

If $f$ has one family of orthogonally intersected spherical curvature lines, then we may locally write $f=A^{-1}C$ where $A$ is a $1$-parameter family of M\"{o}bius transformations.  
\end{corollary}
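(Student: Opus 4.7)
The plan is to assemble three earlier results: Theorem~\ref{thm:A} provides the factorisation $f = A^{-1}C$ along any spacelike rank $1$ subbundle $L\le f^\perp$ that is constant along a family of spherical curvature lines; Proposition~\ref{prop:planorth} converts each of the two geometric hypotheses into a perpendicularity condition on $L$; and the paragraph immediately preceding the corollary records that $L\perp\mathfrak{v}$ implies $A\mathfrak{v}=\mathfrak{v}$. Combined with the subgroup characterisations from Subsection~\ref{subsec:symbreak}, this yields the claim directly.

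Concretely, I would first write $f = A^{-1}C$ locally via Theorem~\ref{thm:A}, with $A:\tilde{I}\to\Ortho(4,2)$ the spherical evolution map along the corresponding $L$. For the planar case, I work in a Euclidean space form (whose space form vector $\mathfrak{q}$ is lightlike) and use Proposition~\ref{prop:planorth} to obtain $L\perp\mathfrak{q}$; for the orthogonally intersecting spherical case, I fix a Riemannian conformal geometry with timelike point sphere complex $\mathfrak{p}$ so that Proposition~\ref{prop:planorth} gives $L\perp\mathfrak{p}$. In either case the key observation yields $A\mathfrak{v}=\mathfrak{v}$ for the relevant $\mathfrak{v}$, which by Subsection~\ref{subsec:symbreak} identifies $A$ pointwise with an element of the Laguerre subgroup (respectively the M\"{o}bius subgroup) of $\Ortho(4,2)$.

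The only step requiring any verification is the key observation itself. For a constant $\mathfrak{v}\in\Gamma L^\perp$ one has $d\mathfrak{v}=0$; decomposing via $d = \mathcal{D}^{L} + \mathcal{N}^{L}$ and noting that $\mathcal{D}^L\mathfrak{v}\in\Gamma L^\perp$ while $\mathcal{N}^L\mathfrak{v}\in\Gamma L$, the fact that these bundles are complementary (as $L$ is spacelike) forces $\mathcal{D}^L\mathfrak{v}=0$; finally, $A\cdot\mathcal{D}^L = d$ with $A(v_0)=\mathrm{id}$ transports the $\mathcal{D}^L$-parallel section $\mathfrak{v}$ to itself as a $d$-parallel section, giving $A\mathfrak{v}=\mathfrak{v}$. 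There is no genuine obstacle; the corollary is essentially bookkeeping against the apparatus already built in the preceding subsections.
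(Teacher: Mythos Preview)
Your proposal is correct and follows precisely the paper's approach: the paper's ``proof'' consists solely of the sentence preceding the corollary (recording that $L\perp\mathfrak{q}$ implies $A\mathfrak{q}=\mathfrak{q}$) together with an appeal to Subsection~\ref{subsec:symbreak}, and you have reconstructed exactly this, supplementing it with the short verification of the key observation that the paper leaves implicit.
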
 

Let $\mathfrak{q},\mathfrak{p}$ be the space form vector and point sphere complex for a Euclidean geometry $\mathfrak{Q}^{3}$. From Proposition~\ref{prop:planorth} we deduce that $L_{i}\perp \mathfrak{q},\mathfrak{p}$ if and only if $T_{i}$ are a family of orthogonally intersected planar curvature lines in $\mathfrak{Q}^{3}$. According to~\cite{BG2018} such surfaces are locally Monge surfaces, that is
a surface formed by taking a curve in $\mathbb{R}^{3}$ and parallel transporting an initial curve lying in the normal plane along that curve. This is reflected by the fact that, locally, spherical evolution maps $A$ along $L_{i}$ satisfy $A\mathfrak{p}=\mathfrak{p}$ and $A\mathfrak{q}=\mathfrak{q}$, and are thus isometries of $\mathfrak{Q}^{3}$. 

\begin{proposition}
\label{prop:monge}
$f$ locally projects to a Monge surface in a Euclidean space form $\mathfrak{Q}^{3}$ defined by point sphere complex $\mathfrak{p}$ and space form vector $\mathfrak{q}$ if and only if one of the osculating complexes satisfies $L_{i}\perp \mathfrak{p},\mathfrak{q}$.
\end{proposition}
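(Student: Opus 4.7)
The plan is to combine Theorem~\ref{thm:A} and Corollary~\ref{cor:evoplanorth} with the subgeometric interpretations given in Proposition~\ref{prop:planorth}.

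For the $(\Leftarrow)$ direction, suppose without loss of generality that $L_1\perp\mathfrak{p},\mathfrak{q}$. Two applications of Proposition~\ref{prop:planorth} show that the leaves of $T_1$ are planar curvature lines in $\mathfrak{Q}^3$ and that the carrier planes intersect the surface orthogonally in the Riemannian conformal geometry determined by $\mathfrak{p}$. Since $L_1$ is then constant along $T_1$ by Proposition~\ref{prop:sphLi}, Theorem~\ref{thm:A} yields a local factorisation $f = A^{-1}C$, with $A$ the spherical evolution map along $L_1$ based at some $v_0\in\tilde{I}$ and $C$ a Legendre curve in $L_1(v_0)^\perp$. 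Since $\mathfrak{p},\mathfrak{q}\in L_1^\perp$ and $\mathcal{N}^{L_1}\in\Omega^1(L_1\wedge L_1^\perp)$ maps $L_1^\perp$ into $L_1$, the decomposition $0 = d\mathfrak{p} = \mathcal{D}^{L_1}\mathfrak{p} + \mathcal{N}^{L_1}\mathfrak{p}$ forces $\mathcal{D}^{L_1}\mathfrak{p}=0$, and similarly $\mathcal{D}^{L_1}\mathfrak{q}=0$. Hence $A\mathfrak{p}=\mathfrak{p}$ and $A\mathfrak{q}=\mathfrak{q}$, so $A^{-1}$ is a one-parameter family of Euclidean isometries of $\mathfrak{Q}^3$.

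Next, $L_1(v_0)^\perp\cong\mathbb{R}^{3,2}$ contains both $\mathfrak{p}$ and $\mathfrak{q}$, making it the Lie sphere ambient space for a distinguished $2$-plane $\Pi_0\subset\mathfrak{Q}^3$. Therefore $C$ projects to a planar Legendre curve in $\Pi_0$, and $f = A^{-1}C$ is the surface swept out by rigidly transporting this planar curve through $\mathfrak{Q}^3$ via the isometries $A^{-1}(v)$. The $L_1\perp\mathfrak{p}$ condition ensures that each image plane $\Pi_v := A^{-1}(v)\Pi_0$ intersects the surface orthogonally along its curvature line, which is the defining feature of the Monge construction.

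For the converse, assume $f$ projects to a Monge surface in $\mathfrak{Q}^3$. The distinguished family of curvature lines then lies in a $1$-parameter family of planes perpendicular to the spine curve, so each such plane contains the surface normal along its curvature line. In other words these curvature lines are both planar and orthogonally intersected by their carrier planes, and applying Proposition~\ref{prop:planorth} to each condition yields $L_i\perp\mathfrak{q}$ and $L_i\perp\mathfrak{p}$ respectively. The main obstacle I foresee is the precise identification in the $(\Leftarrow)$ argument that the Euclidean-isometric evolution $A^{-1}C$ produces a Monge surface in the classical sense; this reduces to unpacking the symmetry-breaking identifications of Subsection~\ref{subsec:symbreak} and verifying that the spine curve arising from the action of $A^{-1}$ on a fixed point of $\Pi_0$ meets each plane $\Pi_v$ orthogonally, which is automatic once $A^{-1}$ preserves both $\mathfrak{p}$ and $\mathfrak{q}$.
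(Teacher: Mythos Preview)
Your argument is correct and follows essentially the same logical skeleton as the paper: both directions hinge on Proposition~\ref{prop:planorth}, which identifies $L_i\perp\mathfrak{p},\mathfrak{q}$ with the curvature lines along $T_i$ being planar and orthogonally intersected in $\mathfrak{Q}^3$. The paper then simply cites~\cite{BG2018} for the fact that a surface with a family of orthogonally intersected planar curvature lines is locally a Monge surface, and mentions as a supporting remark that the spherical evolution map fixes $\mathfrak{p}$ and $\mathfrak{q}$ and hence consists of Euclidean isometries.

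Where you differ is that, rather than appealing to~\cite{BG2018}, you attempt to reconstruct the Monge property directly from the factorisation $f=A^{-1}C$ with $A$ isometric. This is a reasonable route, but as you yourself flag at the end, it is not quite complete: knowing that $A^{-1}$ is a one-parameter family of isometries carrying a planar curve is weaker than knowing it is the normal parallel transport along a spine curve, which is the defining datum of a Monge surface. The missing step is precisely the content of~\cite{BG2018}, namely that the orthogonal-intersection condition forces the motion to be the Bishop-frame transport along the orthogonal trajectory. So your approach is sound but ultimately still needs the same external input the paper invokes; the paper's version is just more economical in citing it outright.
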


Blaschke~\cite[\S 88]{B1929} characterised surfaces with two families of spherical curvature lines up to Lie sphere transformation and showed that three cases exist depending on the signature of the constant bundles $\mathcal{H}_{1}$ and $\mathcal{H}_{2}$. These characterisations can be recovered by making appropriate choices of timelike point sphere complex $\mathfrak{p}$ and lightlike space form vector $\mathfrak{q}$ for a Euclidean geometry $\mathfrak{Q}^{3}$ and evaluating the corresponding point sphere map $\mathfrak{f}= f\cap \mathfrak{Q}^{3}$: 
\begin{itemize}
\item If $\mathcal{H}_{1}$ and $\mathcal{H}_{2}$ both have signature $(2,1)$ then we may choose $\mathfrak{p}\in \mathcal{H}_{1}$ and $\mathfrak{q}\in \mathcal{H}_{2}$. By Proposition~\ref{prop:planorth} it follows that $\mathfrak{f}$ has planar curvature lines along $T_{1}$ and orthogonally intersected spherical curvature lines along $T_{2}$. This characterises $\mathfrak{f}$ as a Joachimsthal surface. 

\item If $\mathcal{H}_{1}$ has signature $(3,0)$ and $\mathcal{H}_{2}$ has signature $(1,2)$, then we may choose $\mathfrak{q},\mathfrak{p}\in \mathcal{H}_{2}$. By Proposition~\ref{prop:monge}, we then have that $\mathfrak{f}$ is a Monge surface with planar curvature lines along $T_{1}$. Moreover, there exists a lightlike vector $\mathfrak{o}\in \mathcal{H}_{2}$ that is perpendicular to $\mathfrak{p}$ with $(\mathfrak{o},\mathfrak{q})=-1$. Thus $\mathfrak{o}$ is a point in $\mathfrak{Q}^{3}$ and the planes defined by $L_{1}$ all contain this point. Hence the planes containing the curvature lines along $T_{1}$ envelop a cone.

\item If $\mathcal{H}_{1}$ and $\mathcal{H}_{2}$ are degenerate, then we may choose $\mathfrak{q}\in \mathcal{H}_{1}\cap \mathcal{H}_{2}$. Thus, by Proposition~\ref{prop:planorth}, we have that $\mathfrak{f}$ has planar curvature lines along $T_{1}$ and $T_{2}$. 
\end{itemize}

\begin{remark}
Using the formulation in Remark~\ref{rem:betagamma}, we have that surfaces with two families of spherical curvature lines satisfy
\[ (\ln \beta)_{uv} = (\ln \gamma)_{uv} = \beta\gamma.\]
In~\cite{F2002} this is used to show that such surfaces are Lie applicable. In Section~\ref{sec:lieappsph} we shall recover this result using a curved flat formulation of Lie applicable surfaces. 
\end{remark}

\section{Ribaucour transforms of surfaces with spherical curvature lines}\label{section5}

Classically a pair of surfaces is called a Ribaucour pair if they envelop a common sphere congruence and the curvature lines of both surfaces correspond. We then say that the two surfaces are Ribaucour transforms of each other. In~\cite{BH2006} a Lie geometric characterisation of Ribaucour transforms was given in terms of the flatness of a certain rank $2$ bundle. In~\cite{B1929} it is shown that Ribaucour transforms of channel surfaces have a family of spherical curvature lines. In this section we shall recover this result and show that the converse is true. 

Suppose that $f,\hat{f}:\Sigma\to\mathcal{Z}$ are Legendre maps such that $s_{0}:=f\cap \hat{f}$ is a rank $1$ subbundle of $f$ and $\hat{f}$. Geometrically, $s_{0}$ is a common sphere congruence enveloped by both $f$ 
and $\hat{f}$. From~\cite[Corollary 2.11]{P2020} we have the following characterisation of when $f$ and $\hat{f}$ are a Ribaucour pair: 

\begin{lemma}
\label{lem:shatsflat}
Suppose that $s\le f$ and $\hat{s}\le \hat{f}$ such that $s\cap s_{0} = \hat{s}\cap s_{0} = \{0\}$. Then $f$ and $\hat{f}$ are Ribaucour transforms of each other if and only if the induced connection on $s\oplus \hat{s}$ is flat.
\end{lemma}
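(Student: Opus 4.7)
The plan is to translate flatness of the induced connection on $s\oplus\hat{s}$ into the statement that the curvature directions of $f$ and $\hat{f}$ along the common sphere congruence $s_{0}:=f\cap\hat{f}$ correspond. Since $s\cap s_{0}=\hat{s}\cap s_{0}=\{0\}$, one has $f = s\oplus s_{0}$ and $\hat{f}=\hat{s}\oplus s_{0}$, so $s\oplus\hat{s}$ is a rank $2$ subbundle of $\underline{\mathbb{R}}^{n+2,2}$. Generically this bundle is non-degenerate (otherwise $\hat{s}\subset f^{\perp}=f^{(1)}$ which would force too rigid a contact condition), so the induced connection $\mathcal{D}$ is defined by projecting $d$ orthogonally along $(s\oplus\hat{s})^{\perp}$.

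First I would fix nowhere-vanishing local sections $\sigma\in\Gamma s$, $\hat\sigma\in\Gamma\hat{s}$, $\sigma_{0}\in\Gamma s_{0}$ and expand
\[
d\sigma=\alpha\,\sigma+\beta\,\hat\sigma+\eta,\qquad d\hat\sigma=\gamma\,\sigma+\delta\,\hat\sigma+\hat\eta,
\]
with $\eta,\hat\eta$ taking values in $(s\oplus\hat{s})^{\perp}$ (and absorbing the $s_{0}$-components). Then the curvature of $\mathcal{D}$ is the $\mathfrak{gl}(2)$-valued $2$-form $d\omega+\omega\wedge\omega$ associated with $\omega=\bigl(\begin{smallmatrix}\alpha&\gamma\\ \beta&\delta\end{smallmatrix}\bigr)$. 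Separately, the curvature direction distribution of $s_{0}$ regarded as a subbundle of $f$, namely $T_{s_{0}}^{f}\le T\Sigma$, is cut out by the condition $d_{X}\sigma_{0}\in f$; pairing this with $\hat\sigma$ identifies it with the kernel of a $1$-form proportional to $\beta$. Symmetrically, $T_{s_{0}}^{\hat f}=\ker(\gamma)$, and the Ribaucour condition for $(f,\hat{f})$ is precisely $T_{s_{0}}^{f}=T_{s_{0}}^{\hat f}$, i.e.\ $\beta\wedge\gamma=0$.

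A short calculation then ties these together: using the flatness of the ambient trivial connection on $\underline{\mathbb{R}}^{n+2,2}$, the $(1,2)$ and $(2,1)$ components of $d\omega+\omega\wedge\omega$ reduce, after the appropriate normalisation of $\sigma,\hat\sigma,\sigma_{0}$, to a multiple of $\beta\wedge\gamma$, while the diagonal components vanish identically for dimensional and metric reasons. This gives the desired equivalence. The main obstacle is the bookkeeping needed to pin down normalisations so that $\eta$, $\hat\eta$ genuinely take values in $(s\oplus\hat{s})^{\perp}$ and so that nothing extra leaks into the diagonal of the curvature. A conceptually cleaner route, which is the one followed in \cite[Corollary 2.11]{P2020}, avoids this by characterising the flatness of $\mathcal{D}$ through the local existence of a parallel null line in $s\oplus\hat{s}$ distinct from $s$ and $\hat s$; this parallel line then plays the role of the Darboux sphere congruence in the gauge-theoretic picture, and the Ribaucour property follows immediately.
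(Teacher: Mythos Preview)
The paper does not prove this lemma at all; it simply records the statement and attributes it to \cite[Corollary~2.11]{P2020}. So there is no in-house argument to compare against, and your proposal is an attempt to reconstruct a proof from scratch.

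Unfortunately there is a concrete error in your computation. Since $\sigma$ and $\hat\sigma$ are both null and the induced connection on the $(1,1)$-bundle $s\oplus\hat s$ is metric, the off-diagonal entries of your connection matrix vanish identically: pairing $d\sigma=\alpha\sigma+\beta\hat\sigma+\eta$ with $\sigma$ gives $0=(d\sigma,\sigma)=\beta\,(\hat\sigma,\sigma)$, whence $\beta\equiv 0$, and likewise $\gamma\equiv 0$. (Equivalently, the two null lines of a $(1,1)$-plane are automatically parallel subbundles for any metric connection.) Your proposed Ribaucour criterion $\beta\wedge\gamma=0$ is therefore vacuous, and the identification of the curvature directions of $f$ and $\hat f$ with $\ker\beta$ and $\ker\gamma$ cannot be correct.

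What actually governs flatness is the diagonal. With $\beta=\gamma=0$ the curvature of the induced connection is $\mathrm{diag}(d\alpha,d\delta)$, and since $\alpha+\delta=d\log(\sigma,\hat\sigma)$ is exact, flatness reduces to $d\alpha=0$. Differentiating $\alpha\,(\sigma,\hat\sigma)=(d\sigma,\hat\sigma)$ and using flatness of the ambient connection gives
\[
d\alpha\,(\sigma,\hat\sigma)=-\bigl(\eta\wedge\hat\eta\bigr),
\]
so the induced connection is flat if and only if the bilinear form $(X,Y)\mapsto(\eta(X),\hat\eta(Y))$ is symmetric. Here $\eta,\hat\eta$ take values in $(f+\hat f)^{\perp}$, whose quotient by its radical $s_{0}$ is canonically identified with both $f^{\perp}/f$ and $\hat f^{\perp}/\hat f$; under these identifications $\eta(X)\equiv d_{X}\sigma\bmod f$ and $\hat\eta(X)\equiv d_{X}\hat\sigma\bmod\hat f$, and the symmetry condition becomes precisely the statement that the curvature directions of $f$ and $\hat f$ coincide. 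Your closing remark, that the argument in \cite{P2020} proceeds instead via the existence of parallel null lines, is accurate and is indeed the cleaner route.
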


Suppose now that $f$ and $\hat{f}$ are umbilic-free. Let $s_{1}$ and $s_{2}$ denote the curvature sphere congruences of $f$ with respective curvature subbundles $T_{1}$ and $T_{2}$, and likewise let $\hat{s}_{1}$, $\hat{s}_{2}$ denote the curvature sphere congruences of $\hat{f}$ with respective curvature subbundles $\hat{T}_{1}$ and $\hat{T}_{2}$. We can then say that $f$ and $\hat{f}$ are Ribaucour transforms of each other if and only if $\hat{T}_{1}=T_{1}$ and $\hat{T}_{2}=T_{2}$. Moreover, from~\cite[Proposition 4.3]{PS2018} we have the following result: 

\begin{lemma}
\label{lem:curvsphrib}
Suppose that $s_{0}$ nowhere coincides with the curvature sphere congruences of $f$ or $\hat{f}$. Then $f$ and $\hat{f}$ are Ribaucour transforms of each other if and only if  
\[ d\hat{\sigma}_{1}(\hat{T}_{2}) \le s_{1}\oplus \hat{s}_{1}\oplus d\sigma_{1}(T_{2}) \quad \text{and} \quad d\hat{\sigma}_{2}(\hat{T}_{1}) \le s_{2}\oplus \hat{s}_{2}\oplus d\sigma_{2}(T_{1}),\]
where $\sigma_{i}\in \Gamma s_{i}$ and $\hat{\sigma}_{i}\in \Gamma \hat{s}_{i}$. 
\end{lemma}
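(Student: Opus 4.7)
The plan is to prove both directions of the equivalence using the common sphere congruence $s_0 = f \cap \hat{f}$ together with the curvature direction properties of $f$ and $\hat{f}$.

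For the forward direction, assume $f$ and $\hat{f}$ are Ribaucour transforms, so $T_i = \hat{T}_i$. Fix a non-zero $X \in T_2 = \hat{T}_2$ and a nowhere-vanishing section $\sigma_0 \in \Gamma s_0$. Since $s_0$ is distinct from every curvature sphere, we may decompose
\[ \sigma_0 = a\sigma_1 + b\sigma_2 = \hat{a}\hat{\sigma}_1 + \hat{b}\hat{\sigma}_2 \]
with $a,b,\hat{a},\hat{b}$ nowhere zero. By the curvature direction assumption, $d_X\sigma_2 \in \Gamma f$ and $d_X\hat{\sigma}_2 \in \Gamma \hat{f}$, so differentiating both expressions for $\sigma_0$ and reducing modulo $f$ and $\hat{f}$ respectively gives
\[ d_X\sigma_0 \equiv a\, d_X\sigma_1 \pmod{f} \quad\text{and}\quad d_X\sigma_0 \equiv \hat{a}\, d_X\hat{\sigma}_1 \pmod{\hat{f}}. \]
Subtracting yields $a\, d_X\sigma_1 - \hat{a}\, d_X\hat{\sigma}_1 \in \Gamma(f+\hat{f}) = \Gamma(s_0 \oplus s_1 \oplus \hat{s}_1)$, whence $d_X\hat{\sigma}_1$ lies a priori in the rank four subbundle $s_0 \oplus s_1 \oplus \hat{s}_1 \oplus d\sigma_1(T_2)$.

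The delicate step is to refine this to the claimed rank three containment, i.e.\ to show that the $s_0$-component of $d_X\hat{\sigma}_1$ vanishes. For this I would invoke Lemma~\ref{lem:shatsflat} applied with $s = s_1$ and $\hat{s} = \hat{s}_1$, each complementary to $s_0$ by assumption: the flatness of the induced connection on $s_1 \oplus \hat{s}_1$ under the Ribaucour hypothesis supplies precisely the identity that eliminates the $s_0$ term. Running the symmetric argument for $Y \in T_1 = \hat{T}_1$, with the roles of $s_1,s_2$ and $\hat{s}_1,\hat{s}_2$ interchanged, produces the second containment involving $d\hat{\sigma}_2(\hat{T}_1)$.

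For the converse, I would reverse the process. Given both containments, take $Y \in \hat{T}_2$; the first containment expresses $d_Y\hat{\sigma}_1$ as a combination of $\sigma_1$, $\hat{\sigma}_1$, and a section of $d\sigma_1(T_2)$, and an analogous expression along $\hat{T}_1$ comes from the second containment. Substituting these into the two decompositions of $d_Y\sigma_0$ and tracking coefficients modulo $f$ and $\hat{f}$ forces $d_Y\sigma_2 \in \Gamma f$, identifying $Y$ as the curvature direction $T_2$ of $s_2$; hence $\hat{T}_2 = T_2$ and symmetrically $\hat{T}_1 = T_1$, completing the equivalence. The principal obstacle is the forward-direction refinement: the straightforward computation places $d_X\hat{\sigma}_1$ only in the four-dimensional subbundle $s_0 \oplus s_1 \oplus \hat{s}_1 \oplus d\sigma_1(T_2)$, and removing the $s_0$ summand requires the additional structural input provided by Lemma~\ref{lem:shatsflat}.
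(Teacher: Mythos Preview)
The paper does not prove this lemma: it is quoted verbatim from \cite[Proposition~4.3]{PS2018}, so there is no in-paper argument to compare your approach against.

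Assessing your sketch on its own terms, the gap you flag is real and your proposed patch does not obviously close it. Your differentiation of $\sigma_0$ correctly places $d_X\hat{\sigma}_1$ in the rank~$4$ bundle $s_0 \oplus s_1 \oplus \hat{s}_1 \oplus d\sigma_1(T_2)$, and you then invoke Lemma~\ref{lem:shatsflat} to kill the $s_0$-component. But flatness of the induced connection on $s_1 \oplus \hat{s}_1$ is a curvature condition: it constrains how the $(s_1\oplus\hat{s}_1)^{\perp}$-components of $d\sigma_1$ and $d\hat{\sigma}_1$ interact under \emph{mixed} second derivatives along $T_1$ and $T_2$. The containment you need is a first-order statement about a single directional derivative, and you give no identity linking the two. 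Simply asserting that flatness ``supplies precisely the identity that eliminates the $s_0$ term'' is not a proof. Your converse is similarly under-argued: you claim that substituting the containments into the two decompositions of $d_Y\sigma_0$ ``forces $d_Y\sigma_2 \in \Gamma f$'', but that is the conclusion, and no mechanism is exhibited.

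If you want to complete this line, a more promising route than Lemma~\ref{lem:shatsflat} is to work directly with the rank~$3$ degenerate bundle $f+\hat{f}$ (whose radical is $s_0$) together with the Legendre conditions $f^{(1)}\le f^{\perp}$ and $\hat{f}^{(1)}\le \hat{f}^{\perp}$, or to consult the original argument in \cite{PS2018}.
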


For later use we have the following technical lemma: 
\begin{lemma}
\label{lem:regrib}
Suppose that $f$ is a regular umbilic-free Legendre map and suppose that $\hat{f}:\Sigma\to \mathcal{Z}$ with $\hat{f}^{(1)}\perp \hat{f}$ and $f\cap \hat{f} = s_{0}$ for some rank $1$ null subbundle $s_{0}$. Then $\hat{f}$ is a Legendre map and $s_{0}$ nowhere coincides with the curvature sphere congruences of $f$ or $\hat{f}$. 
\end{lemma}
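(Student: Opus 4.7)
The plan is first to extract dimensional and signature information from the hypothesis $f \cap \hat{f} = s_0$, then use this to verify the immersion part of the Legendre condition for $\hat{f}$, and finally to rule out coincidence of $s_0$ with any curvature sphere. The starting observation is that since $s_0 = f \cap \hat{f}$ is rank $1$, the sum $f + \hat{f}$ has rank $3$. A totally null subspace of $\mathbb{R}^{4,2}$ has dimension at most $2$, so $f + \hat{f}$ is not totally null, forcing $f \not\le \hat{f}^\perp$ and $\hat{f} \not\le f^\perp$. A dimension count then yields $f \cap \hat{f}^\perp = \hat{f} \cap f^\perp = s_0$, so that the rank $3$ bundle $(f + \hat{f})^\perp = f^\perp \cap \hat{f}^\perp$ meets $f$ and $\hat{f}$ exactly in $s_0$. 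Since any $\sigma_0 \in \Gamma s_0$ is a section of both $f$ and $\hat{f}$, the Legendre-type conditions $f^{(1)} \le f^\perp$ and $\hat{f}^{(1)} \le \hat{f}^\perp$ immediately give $d\sigma_0 \in (f + \hat{f})^\perp$.

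To verify the Legendre condition for $\hat{f}$, I would suppose to the contrary that there exists $X \neq 0$ with $\hat{\beta}(X) = 0$, where $\hat{\beta}$ is the analogue of the tensor $\beta$ on $\hat{f}$. Applied to $\sigma_0$, this gives $d_X \sigma_0 \in \hat{f}(x) \cap (f + \hat{f})^\perp(x) = s_0(x)$, so $\beta(X)\sigma_0 = 0$ in $f^\perp / f$. Hence $\sigma_0(x)$ lies in the kernel of $\beta(X)$, meaning $s_0(x)$ coincides with a curvature sphere of $f$. Thus the Legendre condition and the non-coincidence claim (for $f$) are tied together: it suffices to rule out the coincidence of $s_0$ with any curvature sphere of $f$.

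To rule out this coincidence, suppose to the contrary that $s_0 = s_1$ as subbundles on some open set, and take $\hat{\tau} \in \Gamma \hat{f}$ complementary to $s_0$. The conditions $\hat{\tau} \perp \sigma_1$ (so that $\hat{f}$ is null) and $\hat{\tau} \perp d\sigma_1$ (from $\hat{f}^{(1)} \perp \hat{f}$ via Leibniz), combined with the regularity of $f$ which guarantees that $d_{X_1}\sigma_1 = a\sigma_1 + b\sigma_2$ with $b \neq 0$ for any nonzero $X_1 \in T_1$, force $b\,(\hat{\tau},\sigma_2) = 0$ and thus $\hat{\tau} \perp \sigma_2$. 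Hence $\hat{\tau} \in f^\perp$, but since $f^\perp/f$ is positive definite and $\hat{\tau}$ is null with $\hat{\tau} \notin f$, this is impossible. An analogous argument applied to $\hat{f}$, once it is known to be Legendre, rules out coincidence of $s_0$ with the curvature spheres of $\hat{f}$. The main obstacle is this last signature argument: the positive-definiteness of $f^\perp/f$ together with the regularity of $f$ is precisely what forces the contradiction, and upgrading open-set non-coincidence to the pointwise statement uses continuity of $\hat{\beta}$ together with the fact that the coincidence locus is closed with empty interior.
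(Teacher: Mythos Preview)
Your overall strategy is close to the paper's: both establish $d\sigma_0 \perp (f+\hat{f})$ and use that the rank~$3$ bundle $f+\hat{f}$ cannot be totally null to get $f\cap\hat{f}^\perp = \hat{f}\cap f^\perp = s_0$. The paper, however, argues pointwise and in one stroke: if $d_X\sigma_0 \in f(x)$ for some nonzero $X\in T_x\Sigma$, then regularity of $f$ gives $f(x)=s_0(x)\oplus\langle d_X\sigma_0\rangle \le \hat{f}(x)^\perp$, forcing $f(x)=\hat{f}(x)$, a contradiction. Hence $d\sigma_0(T_x\Sigma)$ is a spacelike rank~$2$ subspace of $f^\perp(x)$ for every $x$, and all three conclusions---$\hat{f}$ is Legendre, and $s_0$ avoids the curvature spheres of both $f$ and $\hat{f}$---follow at once from the observation that a nonzero spacelike vector cannot lie in a null $2$-plane.

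Your argument has a genuine gap at the pointwise step. Step~3 only rules out $s_0=s_1$ on an \emph{open set}: to obtain $\hat{\tau}\perp d\sigma_1$ you need $\sigma_1\in\Gamma\hat{f}$, which requires $s_1\subset\hat{f}$ on a neighbourhood, not merely $s_0(x)=s_1(x)$ at a single point. Your proposed upgrade---``closed with empty interior plus continuity of $\hat{\beta}$''---does not yield emptiness: a closed nowhere dense set can perfectly well be a single point, and the rank of a continuous bundle map can drop on such a set. So you have not excluded isolated coincidences $s_0(x)=s_i(x)$, and hence have not established the Legendre condition for $\hat{f}$ at those points. Separately, your ``analogous argument'' for the curvature spheres of $\hat{f}$ is not literally analogous, since regularity of $\hat{f}$ is not assumed; the paper avoids this entirely by using the spacelike conclusion for $d\sigma_0(T\Sigma)$, which excludes membership in \emph{any} null $2$-plane, $f$ or $\hat{f}$.
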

\begin{proof}
Fix a non-zero section $\sigma_{0}\in \Gamma s_{0}$. Then $d\sigma_{0} \perp f+\hat{f}$. Since $f$ is regular, if $d_{X}\sigma_{0}\in f(x)$ for some $X\in T_{x}\Sigma$, then we would have that $f = s_{0}(x) \oplus \langle d_{X}\sigma_{0}\rangle$. This would imply that $f(x) \le (\hat{f}(x))^{\perp}$ and thus $f(x)=\hat{f}(x)$, contradicting our assumption that $f\cap \hat{f}$ is rank $1$. It thus follows that $d\sigma_{0}(T\Sigma)$ is a spacelike rank $2$ subbundle of $f^{\perp}$. Since $s_{0}\le \hat{f}$, together with the condition $\hat{f}^{(1)}\le \hat{f}^{\perp}$, one deduces that $\hat{f}$ is a Legendre map. 
\end{proof}

\subsection{Ribaucour transforms of surfaces with spherical curvature lines}

Suppose that $f$ is an umbilic-free channel surface. Without loss of generality, assume that $f$ has circular curvature directions along the leaves of $T_{1}$. From~\cite[Proposition 3.4]{PS2018} we have that the curvature sphere congruence $s_{1}$ is constant along the leaves of $T_{1}$. It thus follows that $s_{1}\oplus d\sigma_{1}(T_{2})$ is constant along the leaves of $T_{1}$. 

Now suppose that $\hat{f}$ is a regular umbilic-free Ribaucour transform of $f$ with enveloping sphere congruence $s_{0} = f\cap\hat{f}$. By Lemma~\ref{lem:regrib} we have that $s_{0}$ nowhere coincides with the curvature sphere congruences of $\hat{f}$. We may then apply Lemma~\ref{lem:curvsphrib} to learn that 
\begin{equation}
\label{eqn:hatL}
\hat{L}:= (s_{1}\oplus d\sigma_{1}(T_{2}))\cap  (\hat{s}_{1}\oplus d\hat{\sigma}_{1}(T_{2})) =  (s_{1}\oplus d\sigma_{1}(T_{2}))\cap \hat{f}^{\perp}
\end{equation}
has rank $1$. Since $s_{1}\oplus d\sigma_{1}(T_{2})$ is constant along $T_{1}$ and sections of $\hat{s}_{1}\oplus d\hat{\sigma}_{1}(T_{2})$ differentiate into $\hat{f}^{\perp}$ along $T_{1}$, we have that $\hat{L}$ is constant along $T_{1}$. From Lemma~\ref{lem:Ldist} we deduce that the osculating complex $\hat{L}_{1}=\hat{L}$ and Proposition~\ref{prop:sphLi} implies that $\hat{f}$ has spherical curvature lines along the leaves of $T_{1}$. We have thus recovered the following result:

\begin{proposition}[\cite{B1929}]
\label{prop:ribchan}
Ribaucour transforms of channel surfaces have one family of spherical curvature lines. 
\end{proposition}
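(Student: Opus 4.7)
The plan is to exhibit a rank $1$ spacelike subbundle of $\hat f^\perp$ that is constant along the circular curvature direction of $f$, and then identify it with one of the osculating sphere complexes of $\hat f$ via Lemma~\ref{lem:Ldist}, so that Proposition~\ref{prop:sphLi} gives the conclusion. Let me set things up so that $T_1$ is the curvature subbundle along which $f$ is channel; by \cite[Proposition 3.4]{PS2018} the curvature sphere congruence $s_1$ is then constant along $T_1$, and consequently the rank $2$ bundle $s_1\oplus d\sigma_1(T_2)$ (with $\sigma_1\in\Gamma s_1$) is also constant along $T_1$. This is the fixed ambient space I intend to work within.

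Next, I would consider a regular umbilic-free Ribaucour transform $\hat f$ of $f$ enveloping the common sphere congruence $s_0=f\cap\hat f$. By Lemma~\ref{lem:regrib}, $s_0$ nowhere coincides with a curvature sphere of $\hat f$, so Lemma~\ref{lem:curvsphrib} applies on the $\hat f$-side. The corresponding inclusion $d\hat\sigma_1(\hat T_2)\le s_1\oplus\hat s_1\oplus d\sigma_1(T_2)$, together with the fact that $\hat T_i=T_i$ for a Ribaucour pair, forces the intersection
\[
\hat L:=\bigl(s_1\oplus d\sigma_1(T_2)\bigr)\cap\bigl(\hat s_1\oplus d\hat\sigma_1(T_2)\bigr)
\]
to be rank $1$, and a dimension count identifies it with $(s_1\oplus d\sigma_1(T_2))\cap \hat f^\perp$, so in particular $\hat L$ is a spacelike rank $1$ subbundle of $\hat f^\perp$.

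It then remains to show $\hat L$ is constant along $T_1$. The first defining space $s_1\oplus d\sigma_1(T_2)$ is constant along $T_1$ by the channel property, so derivatives of sections of $\hat L$ along $T_1$ lie in it; on the other hand, since $\hat f$ is Legendre, derivatives along $T_1$ of sections of $\hat s_1\oplus d\hat\sigma_1(T_2)\le\hat f^{(1)}=\hat f^\perp$ remain in $\hat f^\perp$. Hence sections of $\hat L$ have derivatives in $(s_1\oplus d\sigma_1(T_2))\cap\hat f^\perp=\hat L$, which is the desired constancy.

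Finally, I would invoke Lemma~\ref{lem:Ldist} applied to $\hat f$, which identifies any spacelike rank $1$ subbundle of $\hat f^\perp$ constant along a curvature distribution with one of the osculating complexes; thus $\hat L=\hat L_1$. Proposition~\ref{prop:sphLi} then yields that the leaves of $T_1$ are spherical curvature lines of $\hat f$. The main obstacle I anticipate is cleanly checking that $\hat L$ is rank $1$ rather than $0$ or $2$, which is precisely where the Ribaucour characterisation of Lemma~\ref{lem:curvsphrib} has to be used in conjunction with $\hat f$ being regular and umbilic-free; everything else is a bookkeeping exercise in the splittings already built in the excerpt.
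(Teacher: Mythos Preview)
Your proposal is correct and follows essentially the same approach as the paper's own proof: both set up the constant rank~$2$ bundle $s_1\oplus d\sigma_1(T_2)$ coming from the channel property, intersect it with $\hat s_1\oplus d\hat\sigma_1(T_2)$ (equivalently with $\hat f^\perp$) via Lemma~\ref{lem:curvsphrib} to obtain a rank~$1$ spacelike $\hat L\le\hat f^\perp$, verify its constancy along $T_1$ by the same two observations you give, and then invoke Lemma~\ref{lem:Ldist} and Proposition~\ref{prop:sphLi}. The only cosmetic difference is that the paper applies Lemma~\ref{lem:regrib} with the roles of $f$ and $\hat f$ exchanged (since it is $\hat f$ that is assumed regular), which you implicitly do as well.
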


Dupin cyclides are those surfaces that are doubly channel, i.e., both families of curvature lines are circular. From Proposition~\ref{prop:ribchan}, we immediately recover the result of~\cite{B1929} that Ribaucour transforms of Dupin cyclides have two families of spherical curvature lines. However, we can in fact say more. By~\eqref{eqn:hatL}, we have that $\hat{L}_{1}\le S_{1}$ and $\hat{L}_{2}\le S_{2}$, where $S_{1}$ and $S_{2}$ are the Lie cyclides of $f$. Since $f$ is a Dupin cyclide, the Lie cylides are the Dupin cylide itself. Hence, $S_{1}$ and $S_{2}$ are constant $(2,1)$ bundles. Thus the osculating bundles $\hat{\mathcal{H}}_{1}$ and $\hat{\mathcal{H}}_{2}$ of $\hat{f}$ are both constant $(2,1)$ bundles. It thus follows from Subsection~\ref{subsec:sphsymbreak} that $\hat{f}$ projects to a Joachimsthal surface in an appropriate Euclidean space form. We thus arrive at the following proposition: 

\begin{proposition}
\label{prop:ribdupin}
Ribaucour transforms of Dupin cyclides project to Joachimsthal surfaces in appropriate Euclidean space forms. 
\end{proposition}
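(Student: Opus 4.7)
The plan is to iterate the argument used to prove Proposition~\ref{prop:ribchan}, exploiting the fact that a Dupin cyclide is channel in both directions, and then to invoke the classification of surfaces with two families of spherical curvature lines given in Subsection~\ref{subsec:sphsymbreak}. First, since $f$ is channel along the leaves of $T_{1}$, the reasoning preceding~\eqref{eqn:hatL} shows that the subbundle $\hat{L}_{1} = (s_{1}\oplus d\sigma_{1}(T_{2}))\cap \hat{f}^{\perp}$ is the first osculating sphere complex of $\hat{f}$ and is constant along $T_{1}$; running the same argument with the roles of the two curvature subbundles exchanged (using that $f$ is also channel along $T_{2}$) yields that $\hat{L}_{2}\le s_{2}\oplus d\sigma_{2}(T_{1})$ is constant along $T_{2}$. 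Thus $\hat{f}$ has two families of spherical curvature lines, recovering and strengthening Proposition~\ref{prop:ribchan}.

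Next, I would identify both Lie cyclide bundles of the Dupin cyclide $f$ as the same constant rank-$3$ subbundle $S\le \underline{\mathbb{R}}^{4,2}$ of signature $(2,1)$ representing $f$ itself: because each family of curvature lines of a Dupin cyclide is circular, the curvature sphere congruences $s_{1},s_{2}$ and their first derivatives along the opposite curvature subbundles all lie in the single constant $(2,1)$-bundle spanned by the cyclide. Consequently the inclusions $\hat{L}_{1}\le S_{1}=S$ and $\hat{L}_{2}\le S_{2}=S$ both hold, so the osculating bundles $\hat{\mathcal{H}}_{1}$ and $\hat{\mathcal{H}}_{2}$ of $\hat{f}$, being generated from $\hat{L}_{1}$ and $\hat{L}_{2}$ by differentiation in the (constant!) directions along $S$, must themselves be constant rank-$3$ subbundles of $S$ of signature $(2,1)$.

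With $\hat{\mathcal{H}}_{1}$ and $\hat{\mathcal{H}}_{2}$ both constant of signature $(2,1)$, I would apply the first case of the trichotomy in Subsection~\ref{subsec:sphsymbreak}: choose a timelike point sphere complex $\mathfrak{p}\in \hat{\mathcal{H}}_{1}$ and, using that $\hat{\mathcal{H}}_{2}$ contains lightlike directions (both $\hat{\mathcal{H}}_{i}$ having indefinite signature), a lightlike space form vector $\mathfrak{q}\in \hat{\mathcal{H}}_{2}$ with $(\mathfrak{p},\mathfrak{q})=-1$, so that together they define a Euclidean geometry $\mathfrak{Q}^{3}$. Then Proposition~\ref{prop:planorth} (combined with Proposition~\ref{prop:Hiconst}) gives that the point sphere map $\hat{\mathfrak{f}} = \hat{f}\cap \mathfrak{Q}^{3}$ has planar curvature lines along $T_{1}$ and orthogonally intersected spherical curvature lines along $T_{2}$, which is precisely the Joachimsthal condition.

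The main obstacle I expect is the second step: verifying cleanly that for a Dupin cyclide the Lie cyclide bundles $S_{1}$ and $S_{2}$ coincide with a single constant $(2,1)$-bundle, and checking that the resulting $\hat{\mathcal{H}}_{i}$ do inherit signature $(2,1)$ rather than degenerating. Once this geometric fact is in place, the rest reduces to symmetry breaking and is essentially bookkeeping. A subtlety to handle along the way is the compatibility of the choices of $\mathfrak{p}$ and $\mathfrak{q}$: one must verify that $\hat{\mathcal{H}}_{1}\cap \hat{\mathcal{H}}_{2}$ does not force a degenerate configuration, which follows from the fact that $\hat{f}$ is by assumption a proper Ribaucour transform and hence not itself a channel surface (so the two osculating bundles are genuinely distinct).
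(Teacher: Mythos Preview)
Your overall strategy matches the paper's: use Proposition~\ref{prop:ribchan} in both curvature directions, show that the osculating bundles $\hat{\mathcal{H}}_{i}$ of $\hat{f}$ are constant of signature $(2,1)$, and invoke the trichotomy of Subsection~\ref{subsec:sphsymbreak}. However, the key step contains a genuine error.

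You claim that for a Dupin cyclide the Lie cyclide bundles $S_{1}$ and $S_{2}$ coincide with a single constant $(2,1)$-bundle $S$. This is false: by construction $\underline{\mathbb{R}}^{4,2}=S_{1}\oplus_{\perp}S_{2}$, so $S_{1}$ and $S_{2}$ are orthogonal complements and certainly not equal. What \emph{is} true for a Dupin cyclide is that $S_{1}$ and $S_{2}$ are each constant $(2,1)$-subspaces (since the Lie cyclide of a Dupin cyclide is the cyclide itself). From~\eqref{eqn:hatL} one has $\hat{L}_{1}\le S_{1}$; since $S_{1}$ is a fixed linear subspace of $\mathbb{R}^{4,2}$, all derivatives of sections of $\hat{L}_{1}$ remain in $S_{1}$, whence $\hat{\mathcal{H}}_{1}\le S_{1}$ and, by rank, $\hat{\mathcal{H}}_{1}=S_{1}$. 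Likewise $\hat{\mathcal{H}}_{2}=S_{2}$. This is exactly the paper's argument, and once you correct the conflation of $S_{1}$ with $S_{2}$ your worries about degeneration of signature and about $\hat{\mathcal{H}}_{1}\cap\hat{\mathcal{H}}_{2}$ evaporate: the two osculating bundles are automatically orthogonal complements, each of signature $(2,1)$.

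A smaller slip: in breaking symmetry you ask for $(\mathfrak{p},\mathfrak{q})=-1$, but in this model the space form vector satisfies $\mathfrak{q}\in\langle\mathfrak{p}\rangle^{\perp}$ (see Subsection~\ref{subsec:symbreak}). With $\mathfrak{p}\in\hat{\mathcal{H}}_{1}=S_{1}$ and $\mathfrak{q}\in\hat{\mathcal{H}}_{2}=S_{2}$ this orthogonality is automatic, so no compatibility check is needed.
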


We now seek a converse to Proposition~\ref{prop:ribchan}. Suppose that $f:\Sigma\to \mathcal{Z}$ is a regular umbilic-free Legendre map, and without loss of generality, assume that the leaves of $T_{1}$ are spherical curvature lines. By Proposition~\ref{prop:sphLi} we have that $L:= L_{1}$ is constant along $T_{1}$. By Lemma~\ref{lem:DLflat} we have that $\mathcal{D}^{L}$ is flat. Since $L$ is parallel for $\mathcal{D}^{L}$, there locally exists a $3$-parameter family of parallel null rank $1$ subbundles $\hat{s}$ of $\mathcal{D}^{L}$ with $\hat{s}\perp L$ and $\hat{s}\cap f = \{0\}$. Define $s_{0}:= f\cap \hat{s}^{\perp}$ and $\hat{f}:= s_{0}\oplus \hat{s}$. Since $\hat{s}$ is parallel for $\mathcal{D}^{L}$, it follows that $\hat{f}^{(1)}\perp \hat{f}$. Then by Lemma~\ref{lem:regrib}, we have that $\hat{f}$ is a Legendre map. Now fix a sphere congruence $s\le f$ with $s\cap s_{0}=\{0\}$. Then it follows from the fact that $\hat{s}$ is parallel for $\mathcal{D}^{L}$ that the induced connection on $s\oplus \hat{s}$ if flat. By Lemma~\ref{lem:shatsflat} we then have that $\hat{f}$ is a Ribaucour transform of $f$. Moreover, since $\mathcal{N}^{L}(T_{1}) = 0$, we have that $\hat{s}$ is constant along $T_{1}$. Thus $\hat{f}$ is a channel surface with circular curvature lines along the leaves of $T_{1}$. We thus arrive at the following theorem: 

\begin{theorem}
\label{thm:3paramchannel}
Any regular umbilic-free Legendre map with spherical curvature lines is a Ribaucour transform of a $3$-parameter family of channel surfaces. 
\end{theorem}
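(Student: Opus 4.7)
The plan is to exploit the flat metric connection $\mathcal{D}^{L}$ of Lemma~\ref{lem:DLflat}, producing the Ribaucour partners by parallel-transporting null lines in $L^{\perp}$. Assume without loss of generality that the spherical curvature lines of $f$ lie along the leaves of $T_{1}$ and set $L := L_{1}$. By Proposition~\ref{prop:sphLi} we have $\mathcal{N}^{L}(T_{1})=0$, and by Lemma~\ref{lem:DLflat} the connection $\mathcal{D}^{L}$ on $\underline{\mathbb{R}}^{4,2} = L \oplus L^{\perp}$ is flat.

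The first step is to build the parameter family. Since $L$ is spacelike of rank $1$, the fibre $L^{\perp}(x_{0})$ at a basepoint $x_{0}\in \Sigma$ has signature $(3,2)$, so the projectivised light cone $\mathbb{P}(\mathcal{L}\cap L^{\perp}(x_{0}))$ is a three-dimensional projective quadric. Flatness of $\mathcal{D}^{L}$ lets one extend each initial null line uniquely to a parallel null rank $1$ subbundle $\hat{s}\le L^{\perp}$. Discarding the closed locus on which $\hat{s}\cap f \neq \{0\}$ (an open condition that is generic by the regularity of $f$) yields the desired $3$-parameter family.

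Given such an $\hat{s}$, I would set $s_{0}:= f\cap \hat{s}^{\perp}$ and $\hat{f}:= s_{0}\oplus \hat{s}$. Parallelism of $\hat{s}$ under $\mathcal{D}^{L}$ gives $d\hat{s}\le \hat{s}\oplus L$, while $ds_{0}\le f^{\perp}$; together these yield $\hat{f}^{(1)}\perp\hat{f}$, so Lemma~\ref{lem:regrib} promotes $\hat{f}$ to a Legendre map with $f\cap \hat{f}=s_{0}$. Choosing any complement $s\le f$ with $s\cap s_{0}=\{0\}$, the flatness of $\mathcal{D}^{L}$ restricts to flatness of the induced connection on $s\oplus \hat{s}$, and Lemma~\ref{lem:shatsflat} then delivers the Ribaucour pairing. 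Finally, because $\mathcal{N}^{L}(T_{1})=0$, parallelism of $\hat{s}$ along $T_{1}$ strengthens to $d|_{T_{1}}\hat{s}=0$; hence $\hat{s}$ is a curvature sphere congruence of $\hat{f}$ that is constant along $T_{1}$, identifying $\hat{f}$ as a channel surface in the sense of Section~\ref{section4}.

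The main obstacle I anticipate is not the construction itself but the bookkeeping around nondegeneracy and identification. Specifically, one needs to verify that the transversality condition $\hat{s}\cap f=\{0\}$ propagates from $x_{0}$ to all of $\Sigma$ (so that $s_{0}$ really is rank $1$ and $\hat{f}$ is well-defined everywhere), and that the resulting $\hat{s}$ truly serves as a curvature sphere congruence of $\hat{f}$ along $T_{1}$ rather than merely lying inside $\hat{f}$. Once these routine but somewhat delicate checks are made, the dimension count $3 = \dim \mathbb{P}(\mathcal{L}\cap L^{\perp}(x_{0}))$ matches the claimed $3$-parameter family and completes the converse of Proposition~\ref{prop:ribchan}.
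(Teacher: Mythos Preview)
Your proposal is correct and follows essentially the same route as the paper: use Proposition~\ref{prop:sphLi} and Lemma~\ref{lem:DLflat} to obtain the flat connection $\mathcal{D}^{L}$, parallel-transport null lines in $L^{\perp}$ to produce the $3$-parameter family $\hat{s}$, set $\hat{f}=s_{0}\oplus\hat{s}$, and then invoke Lemmas~\ref{lem:regrib} and~\ref{lem:shatsflat} together with $\mathcal{N}^{L}(T_{1})=0$. The two obstacles you flag are indeed minor: the paper treats the transversality $\hat{s}\cap f=\{0\}$ as a local open condition, and the observation $d|_{T_{1}}\hat{\sigma}=\mathcal{N}^{L}(T_{1})\hat{\sigma}=0$ immediately identifies $\hat{s}$ as a curvature sphere congruence of $\hat{f}$ constant along $T_{1}$.
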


\begin{remark}
Theorem~\ref{thm:3paramchannel} is a Lie geometric version of the projective geometric result found in~\cite{F2000i} that states that a surface with one family of asymptotic lines in linear complexes admits a $3$-parameter family of $W$-transformations to ruled surfaces. 
\end{remark}

In Subsection~\ref{subsec:twosphcurv} we shall prove a converse to Proposition~\ref{prop:ribdupin} using Darboux transforms. 

\subsection{Ribaucour transforms via the spherical evolution map}

In this subsection we shall see how Ribaucour pairs of surfaces with spherical curvature lines can be constructed using spherical evolution of Ribaucour pairs of Legendre curves. This leads to a geometric proof of Theorem~\ref{thm:3paramchannel}. 

Let $L:\tilde{I}\to \mathcal{S}$ be an immersion and let $A$ be the spherical evolution map based at $v_{0}$. Suppose that $C, \hat{C}:I\to \mathcal{Z}$ are two Legendre curves in $L_{0}^{\perp}$, with $L_{0}:= L(v_{0})$, such that $Ad_{A}\mathcal{N}^{L}|_{C}$ and $Ad_{A}\mathcal{N}^{L}|_{\hat{C}}$ are nowhere zero. Suppose further that $C\cap \hat{C} = c_{0}$ is a rank $1$ subbundle of $C$ and $\hat{C}$. Geometrically this means that $C$ and $\hat{C}$ envelop a common circle congruence $c_{0}$. Thus $C$ and $\hat{C}$ are a \textit{Ribaucour pair of curves} (see~\cite{BHMR2016}). From Theorem~\ref{thm:A} we have that 
\[ f(u,v)= A^{-1}(v)C(u) \quad \text{and}\quad \hat{f}(u,v) = A^{-1}(v) \hat{C}(u)\]
are Legendre maps parametrised by curvature line coordinates $(u,v)$ such that the $u$-parameter lines are spherical curvature lines. Since $f, \hat{f}\perp L$, we have that the $u$-parameter lines of $f$ and $\hat{f}$ lie on the same $1$-parameter family of spheres. Now $s_{0}:= A^{-1}c_{0}$ is a rank $1$ subbundle of $f$ and $\hat{f}$ and thus $f$ and $\hat{f}$ envelop a common sphere congruence $s_{0}$. Moreover, since $(u,v)$ are curvature line coordinates of $f$ and $\hat{f}$, it follows that $f$ and $\hat{f}$ are a Ribaucour pair of surfaces. 

\begin{proposition}
\label{prop:sphevorib}
Spherical evolution of a Ribaucour pair of Legendre curves yields a Ribaucour pair of Legendre maps. 
\end{proposition}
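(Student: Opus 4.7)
The plan is to directly verify the two defining properties of a Ribaucour pair given at the opening of Section~\ref{section5} for the Legendre maps $f := A^{-1}C$ and $\hat{f} := A^{-1}\hat{C}$, drawing on the constructions already laid out in the paragraph immediately preceding the proposition.

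First, I would invoke Theorem~\ref{thm:A} independently for $C$ and for $\hat{C}$. The hypotheses that $Ad_{A}\mathcal{N}^{L}|_{C}$ and $Ad_{A}\mathcal{N}^{L}|_{\hat{C}}$ are nowhere zero are exactly the regularity conditions required there, so both $f$ and $\hat{f}$ are Legendre maps parametrised by curvature line coordinates $(u,v)$, with the $u$-parameter lines being spherical. In particular, the curvature distributions of $f$ and of $\hat{f}$ both coincide with the coordinate distributions spanned by $\partial_{u}$ and $\partial_{v}$, so the two families of curvature lines correspond pointwise.

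Second, for the common enveloped sphere congruence, I would set $s_{0} := A^{-1}c_{0}$. Since $A(v)\in \Ortho(4,2)$ pointwise, $A^{-1}$ acts as a fibrewise linear isomorphism of the trivial bundle $\underline{\mathbb{R}}^{4,2}$, sending the rank $1$ subbundle $c_{0}\le C\cap \hat{C}$ to a rank $1$ subbundle $s_{0}\le f\cap \hat{f}$. Combined with the curvature line correspondence from the previous paragraph, this is exactly what it means for $f$ and $\hat{f}$ to form a Ribaucour pair. If a more formal verification is preferred, one could instead pick sections $s\le f$ and $\hat{s}\le \hat{f}$ complementary to $s_{0}$ (obtained by pulling back analogous sections of $C$ and $\hat{C}$ via $A$), observe that the induced connection on the rank $2$ bundle $s\oplus \hat{s}$ agrees with the $A^{-1}$-transport of the induced connection on the corresponding bundle for the Legendre curves, and invoke Lemma~\ref{lem:shatsflat}.

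I do not anticipate any serious obstacle: Theorem~\ref{thm:A} has already done the heavy lifting of producing Legendre maps with curvature line structure aligned to the parameter grid, so the content of the proposition is essentially the observation that a single fibrewise Lie sphere transformation $A^{-1}(v)$ simultaneously transports the common enveloped circle congruence of $C$ and $\hat{C}$ to a common enveloped sphere congruence of $f$ and $\hat{f}$, while preserving curvature directions by construction.
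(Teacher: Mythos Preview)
Your proposal is correct and follows essentially the same argument as the paper: the paragraph immediately preceding the proposition already constitutes the paper's proof, and your two steps (apply Theorem~\ref{thm:A} to each curve to get curvature line coordinates $(u,v)$ on both $f$ and $\hat{f}$, then transport $c_{0}$ by $A^{-1}$ to obtain the common enveloped sphere congruence $s_{0}$) match it exactly. The optional verification via Lemma~\ref{lem:shatsflat} that you mention is not in the paper but is a harmless alternative.
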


\begin{remark}
Following from Subsection~\ref{subsec:sphsymbreak}, we see that Proposition~\ref{prop:sphevorib} yields a construction method for Ribaucour pairs of, for example, surfaces with planar curvature lines or Monge surfaces. 
\end{remark}

By choosing a circular Legendre curve $\hat{C}:I\to \mathcal{Z}$, one has that the $u$-curvature lines of $\hat{f}(u,v) = A^{-1}(v)\hat{C}(u)$ are circular and thus $\hat{f}$ is a channel surface. In fact, such a circular $\hat{C}$ contains a constant circle $\hat{c}\le \hat{C}$, and this evolves to yield the sphere curve $\hat{s} := A^{-1}\hat{c} \le \hat{f}$ enveloped by $\hat{f}$.

Given a Legendre curve $C:I\to \mathcal{Z}$ in $L_{0}^{\perp}$, there generically exists a $3$-parameter family of circular Ribaucour transforms $\hat{C}:I\to \mathcal{Z}$ of $C$ in $L_{0}^{\perp}$. Any such transform can be constructed by choosing a constant $\hat{c}\in E_{L_{0}}$ such that $\hat{c}\cap C =\{0\}$ and defining $\hat{C}= \hat{c}\oplus c_{0}$ where $c_{0} = C\cap \hat{c}^{\perp}$. By evolving such $\hat{C}$ we recover the three parameter family of channel surfaces of Theorem~\ref{thm:3paramchannel} that are Ribaucour transforms of $f=A^{-1}C$ (see Figures \ref{fig:dupin}, \ref{fig:rib2} and \ref{fig:rib3}).

\begin{figure}
	\centering
	\begin{minipage}{0.48\textwidth}
		\centering
		\includegraphics[width=0.8\linewidth]{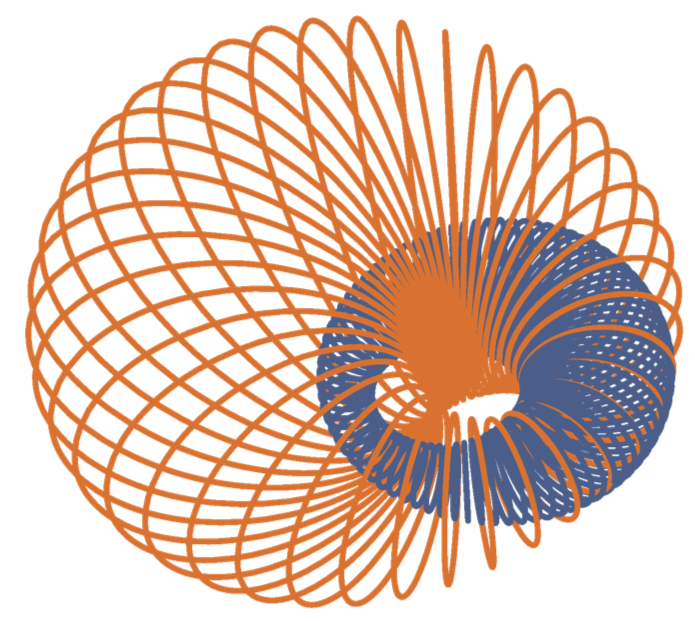}
	\end{minipage}
	\begin{minipage}{0.48\textwidth}
		\centering
		\includegraphics[width=0.8\linewidth]{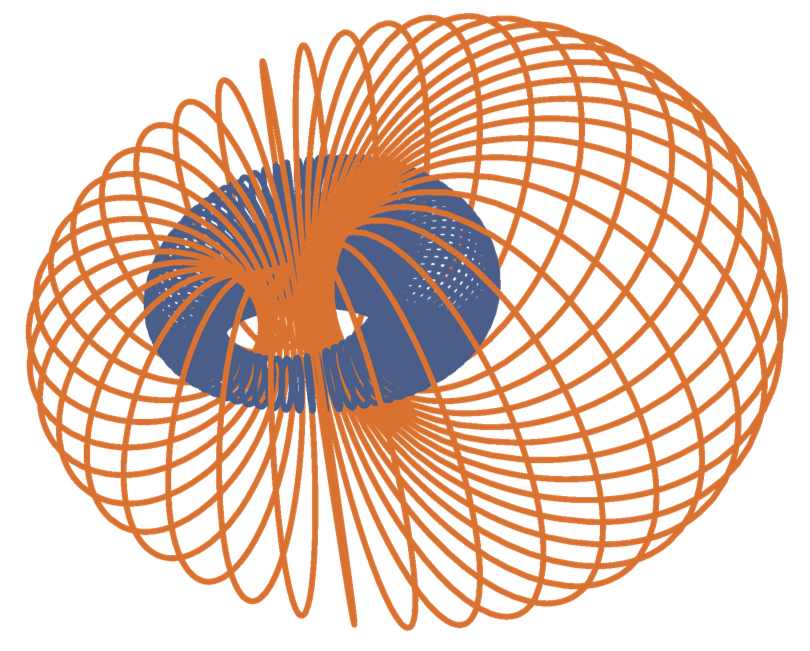}
	\end{minipage}

	\caption{Ribaucour pairs of Dupin cyclides. For Dupin cyclides, one can find an evolution map that consists of point sphere preserving Lie inversions; hence, the evolution map indeed transports the (point spheres of the) circles of the Dupin cyclide.}
	\label{fig:dupin}
\end{figure}

\begin{figure}
	\centering
	\includegraphics[width=0.9\linewidth]{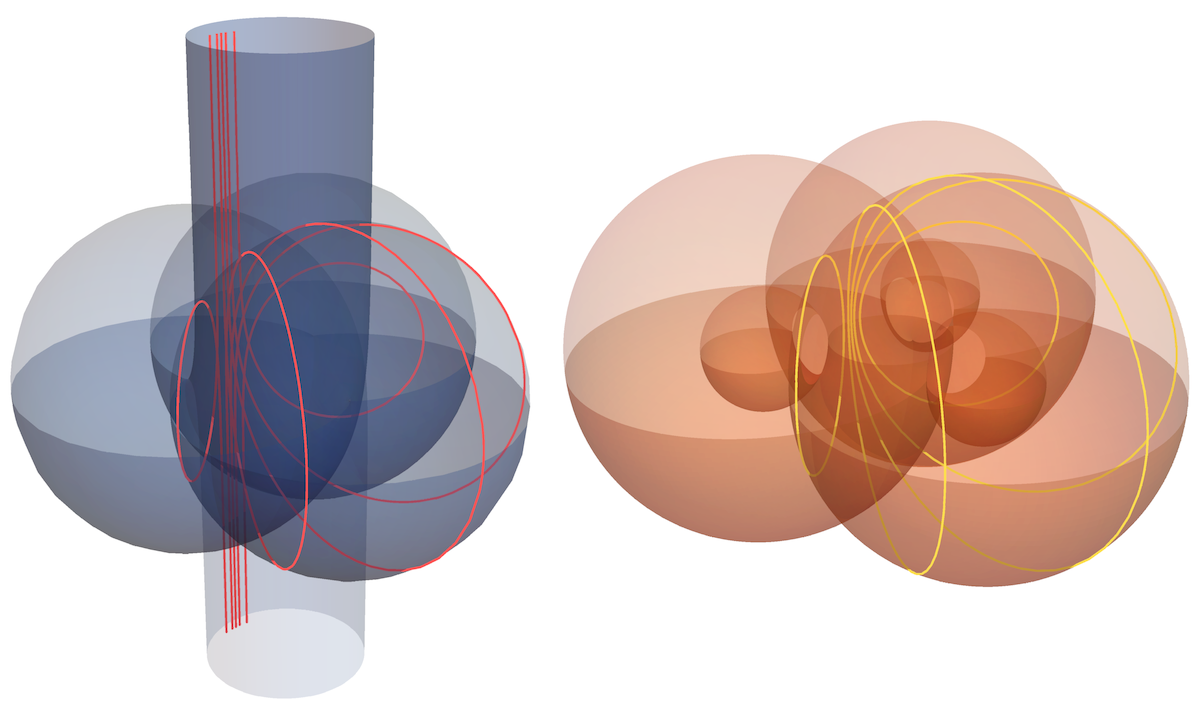}
	\caption{Compact channel surface (on the right) that is a Ribaucour transform of the cmc surface known as a bubbleton (on the left), with a few of the corresponding curvature lines highlighted. The channel surface is obtained using the spherical evolution map induced by the planar curvature lines of the cmc surface.}
	\label{fig:rib2}
\end{figure}

\begin{figure}
	\centering
	\includegraphics[width=0.9\linewidth]{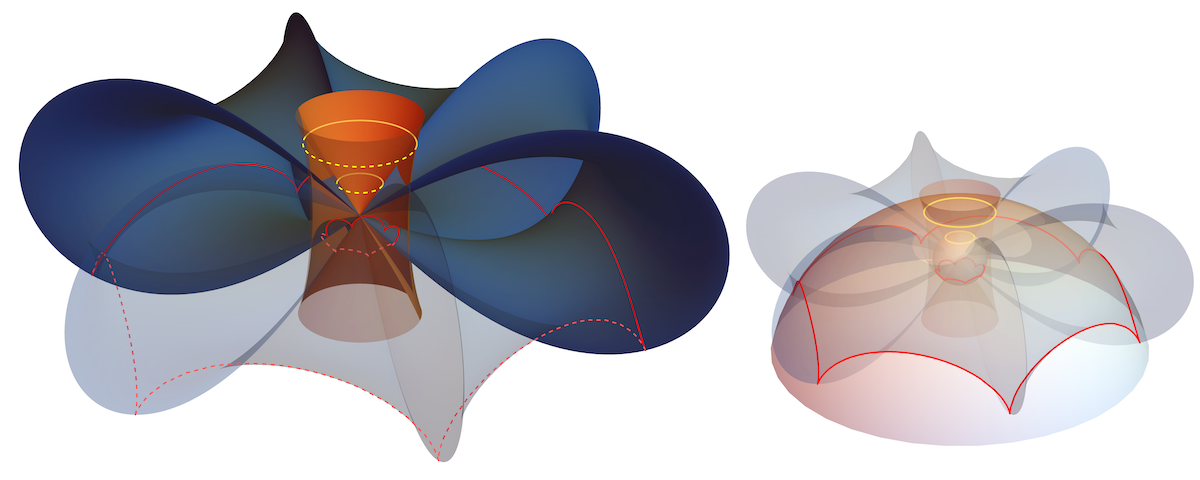}
	\caption{Surface of revolution that is a Ribaucour transform of the constant negative Gaussian curvature torus of Figure \ref{fig:first}, with two of the corresponding curvature lines highlighted. The surface of revolution is obtained using the spherical evolution map induced by the spherical curvature lines of the $K$-surface (see right figure).}
	\label{fig:rib3} 
\end{figure}

\section{Lie applicable surfaces with spherical curvature lines}
\label{sec:lieappsph}

Lie applicable surfaces are the deformable surfaces of Lie sphere geometry~\cite{MN2006}. Examples include isothermic surfaces and Guichard surfaces of conformal geometry, $L$-isothermic surfaces of Laguerre geometry and linear Weingarten surfaces of space form geometries (see~\cite{BHR2012,BHPR2019}). 

From Definition~\ref{def:lieapp}, we have that $f:\Sigma\to\mathcal{Z}$ is a Lie applicable surface if and only if there exists $\eta\in \Omega^{1}(f\wedge f^{\perp})$ satisfying $d\eta = [\eta\wedge \eta]=0$, whose associated quadratic differential $\quaddiff^{\eta}$ is non-zero. In~\cite[Proposition 3.4]{P2020} it is shown that, when $f$ is umbilic-free, the closure of $\eta$ implies $[\eta\wedge \eta]=0$ and $\eta(T_{i})\le f\wedge f_{i}$. We then have that the quadratic differential $\quaddiff^{\eta}$ may be written as 
\[ \quaddiff^{\eta} = \quaddiff^{\eta}_{1}+ \quaddiff^{\eta}_{2}\]
with $\quaddiff^{\eta}_{1}\in \Gamma (T^{*}_{1})^{2}$ and $\quaddiff^{\eta}_{2}\in \Gamma (T^{*}_{2})^{2}$.

Lie applicable surfaces have the property that they envelop isothermic sphere congruences $s\le f$. One characterisation of such sphere congruences is via the existence of a Moutard lift, that is, a section $\sigma\in \Gamma s$ such that $\sigma_{uv}\in \Gamma s$ where $(u,v)$ are curvature line coordinates of $f$ with $\frac{\partial}{\partial u}\in \Gamma T_{1}$ and $\frac{\partial}{\partial v}\in \Gamma T_{2}$. The class of Lie applicable surfaces then splits into two classes: 
\begin{itemize}
\item if $\quaddiff^{\eta}_{1}$ and $\quaddiff^{\eta}_{2}$ are both non-zero then $f$ envelops a pair of isothermic sphere congruences that separate the curvature spheres harmonically. Such surfaces are called $\Omega$-surfaces. 
\item if $\quaddiff^{\eta}_{i}$ vanishes for some $i\in\{1,2\}$, then the curvature sphere congruence $s_{i}$ is isothermic. Such surfaces are called $\Omega_{0}$-surfaces. 
\end{itemize}

In this section we shall consider the case of an umbilic-free Legendre map having at least one family of spherical curvature lines, whilst additionally being a Lie applicable surface. In~\cite{B2001}, isothermic surfaces with one or two families of spherical curvature lines are characterised in terms of differential equations on the Calapso potential. In~\cite{MN1999i} it is shown that surfaces with two families of planar curvature lines are $L$-isothermic and in~\cite{F2002} it is shown that surfaces with two families of spherical curvature lines are multiply Lie applicable. In~\cite{PS2018} it shown that channel surfaces are examples of $\Omega_{0}$-surfaces and in~\cite{HPPwip} it is shown that channel surfaces that are additionally $\Omega$-surfaces are rotational surfaces. 

\subsection{Lie applicable surfaces with one family of spherical curvature lines}

Suppose that $f:I_{1}\times I_{2}\to \mathcal{Z}$ is a regular umbilic-free Legendre map parametrised by curvature line coordinates $(u,v)$ and, without loss of generality, assume that the $u$-parameter lines are spherical. By Proposition~\ref{prop:sphLi} this implies that the osculating sphere complex $L:= L_{1}$ is constant along the $u$-parameter lines, and thus we may consider it as a map $I_{2}\to S$. Using Theorem~\ref{thm:A} we may write $f$ as 
\[ f = A^{-1}C:I_{1}\times I_{2}\to \mathcal{Z}\]
where $A$ is the spherical evolution map along $L$ based at $v_{0}\in I_{2}$ and $C:I_{1}\to \mathcal{Z}$ is a Legendre curve in $L_{0}^{\perp}$ with $L_{0}:= L(v_{0})$. 

Consider $\eta \in \Omega^{1}(f\wedge f^{\perp})$. We may write 
\[ \eta = \eta_{f\wedge L} + \eta_{f\wedge L^{\perp}}\]
where $\eta_{f\wedge L}\in \Omega^{1}(f\wedge L)$ and $\eta_{f\wedge L^{\perp}} \in \Omega^{1}(f\wedge (f^{\perp}\cap L^{\perp}))$. By Lemma~\ref{lem:osc} we have that $f_{1} = f\oplus L_{2}$ and $f_{2} = f\oplus L_{1}$. Thus the conditions $\eta(T_{i})\le f\wedge f_{i}$ imply that $\eta_{f\wedge L}(T_{1})=0$ and $\eta_{f\wedge L^{\perp}}(T_{2}) \le \wedge^{2} f$. One then deduces that
\begin{align*}
\quaddiff^{\eta}_{1}(X,Y) &= \tr(f\to f: \sigma \mapsto \eta_{f\wedge L^{\perp}}(X)d_{Y}\sigma),\\
\quaddiff^{\eta}_{2}(X,Y) &= \tr(f\to f: \sigma \mapsto \eta_{f\wedge L}(X)d_{Y}\sigma). 
\end{align*}

The following result shows that if a regular Legendre map $f$ has spherical curvature lines along the leaves of $T_{i}$, then the curvature sphere congruence $s_{i}$ cannot be isothermic: 

\begin{lemma}
\label{lem:Q1neq0}
Suppose that $\eta$ is closed with $\quaddiff^{\eta}\neq 0$. Then $\quaddiff^{\eta}_{1}\neq 0$.
\end{lemma}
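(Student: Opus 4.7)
The plan is to argue by contradiction: suppose $\quaddiff^{\eta}_1 \equiv 0$. Since $\quaddiff^{\eta}\neq 0$ by hypothesis, we must have $\quaddiff^{\eta}_2 \neq 0$, so $f$ is an $\Omega_0$-surface and the curvature sphere congruence $s_1$ is isothermic. I shall derive a contradiction with the sphericality of the $T_1$-curvature lines by exploiting the existence of a Moutard lift of $s_1$.

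Concretely, I fix curvature line coordinates $(u,v)$ with $\partial_u\in \Gamma T_1$ and $\partial_v \in \Gamma T_2$, and choose the normalised lifts $\sigma_1\in \Gamma s_1$, $\sigma_2\in \Gamma s_2$ of Remark~\ref{rem:formLi} so that $\sigma_{1,u} = \beta\sigma_2$ and $\sigma_{2,v} = \gamma\sigma_1$, where $\beta,\gamma$ are nowhere zero by the regularity of $f$. A Moutard lift of $s_1$ takes the form $\tau_1 = \phi\sigma_1$ with $\tau_{1,uv}\in \Gamma s_1$, and a direct computation yields
\[
\tau_{1,uv} \;=\; (\phi_{uv}+\phi\beta\gamma)\,\sigma_1 \;+\; \phi_u\,\sigma_{1,v} \;+\; (\phi_v\beta + \phi\beta_v)\,\sigma_2 .
\]
In view of the formula $L_1 = \langle \beta\sigma_{1,v} - \beta_v\sigma_1\rangle$ from Remark~\ref{rem:formLi}, the vector $\sigma_{1,v} - (\beta_v/\beta)\sigma_1$ is a nowhere-zero spacelike section of $L_1$, so in particular $\sigma_{1,v}\notin \Gamma f$. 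Hence $\sigma_1, \sigma_2, \sigma_{1,v}$ are linearly independent, and the Moutard condition forces both $\phi_u = 0$ and $(\phi\beta)_v = 0$. These together yield $\beta(u,v) = h(u)/\phi(v)$ for some function $h(u)$, so that $(\ln\beta)_{uv}\equiv 0$.

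By Remark~\ref{rem:betagamma}, however, the sphericality of the $T_1$-curvature lines is equivalent to $(\ln\beta)_{uv} = \beta\gamma$, which is nowhere zero by regularity. This is the required contradiction. The only non-computational step is the first---invoking isothermicity of $s_1$ from the classification of $\Omega_0$-surfaces recalled at the opening of this section---after which the argument reduces to a direct comparison of the Moutard and spherical-curvature-line PDEs.
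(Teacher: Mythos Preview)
Your proof is correct and follows the same overall strategy as the paper: assume $\quaddiff^{\eta}_{1}=0$, invoke the isothermicity of $s_{1}$ to obtain a Moutard lift, and derive a contradiction with the sphericality of the $T_{1}$-curvature lines and the regularity of $f$. The execution differs slightly. The paper takes the Moutard lift itself as $\sigma_{1}$, observes that $\sigma_{1,v}$ then spans $L_{1}$ (via Lemma~\ref{lem:osc}), and uses the constancy of $L_{1}$ along $T_{1}$ together with $s_{1}\cap L_{1}=\{0\}$ to force $\sigma_{1,uv}=0$; the contradiction with regularity is then read off directly from the equation $\sigma_{1,u}=\alpha\sigma_{1}+\beta\sigma_{2}$. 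You instead keep the normalised lifts of Remark~\ref{rem:formLi}, write the Moutard lift as $\phi\sigma_{1}$, and extract the separable form $\beta(u,v)=h(u)/\phi(v)$, whence $(\ln\beta)_{uv}=0$; the contradiction then comes from the PDE characterisation of spherical curvature lines in Remark~\ref{rem:betagamma}. Your route is slightly more computational but has the advantage of reusing the explicit frame and PDE already recorded in the paper, making the link between isothermicity of $s_{1}$ and failure of sphericality quite transparent; the paper's route is a touch more geometric, pinning the obstruction directly on $L_{1}$.
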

\begin{proof}
If $\quaddiff^{\eta}_{1} =0$, then $s_{1}$ is isothermic and there exists a Moutard lift $\sigma_{1}\in \Gamma s_{1}$, i.e., $\sigma_{1,uv}\in \Gamma s_{1}$. Now $(\sigma_{1,v})_{u}, (\sigma_{1,v})_{v}\in \Gamma S_{1}$ and one deduces that $L_{1} = \langle \sigma_{1,v}\rangle$. Since $L_{1}$ is constant along $T_{1}$, it follows that $\sigma_{1,uv} = (\sigma_{1,v})_{u}=0$. Fixing some non-zero section $\sigma_{2}\in \Gamma s_{2}$, we have that  $\sigma_{1,u}  = \alpha \sigma_{1} + \beta \sigma_{2}$ for some functions $\alpha$, $\beta$. The condition that $\sigma_{1,uv}=0$ implies that $\alpha = 0$ and either $\beta = 0$ or $\sigma_{2,v}=0$. This implies that one of the curvature spheres of $f$ does not immerse, contradicting our regularity assumption on $f$. 
\end{proof}

Using the spherical evolution map $A$ we may write
\[ \eta = Ad_{A^{-1}}\cdot (\zeta_{C\wedge L_{0}} + \zeta_{C\wedge L_{0}^{\perp}}),\]
where 
\[ \zeta_{C\wedge L_{0}} := Ad_{A}\cdot \eta_{f\wedge L}\in \Omega^{1}(C\wedge L_{0})\: \text{ and }  \:\zeta_{C\wedge L_{0}^{\perp}} := Ad_{A}\cdot \eta_{f\wedge L^{\perp}}\in \Omega^{1}(C\wedge (C^{\perp}\cap L_{0}^{\perp})).\] 
Since $\eta_{f\wedge L}(T_{1})=0$ and $\eta_{f\wedge L^{\perp}}(T_{2})\le\wedge^{2}f$, it follows that $\zeta_{C\wedge L_{0}}(T_{1})=0$, and $\zeta_{C\wedge L_{0}^{\perp}}(T_{2})\le \wedge^{2}C$. Moreover, we have that 
\begin{align*}
\mathcal{Q}^{\eta}_{1}(X,Y) &= \tr(C\to C: \theta \mapsto \zeta_{C\wedge L_{0}^{\perp}}(X)d_{Y}\theta), \\
 \mathcal{Q}^{\eta}_{2}(X,Y) &= \tr(C\to C: \theta \mapsto  \zeta_{C\wedge L_{0}}(X)d_{Y}\theta). 
 \end{align*}
Towards a geometric classification of Lie applicable surfaces with one family of spherical curvature lines (see Theorem~\ref{thm:lieappsph}), our strategy for the rest of this subsection will be to show that when $\eta$ is closed, we may gauge $\eta$ so that $\zeta_{C\wedge L_{0}^{\perp}}$ yields a polarisation for $C$. The other consequences of $\eta$ being closed then give rise to linear conserved quantities for this polarisation. 

Using~\eqref{eqn:A-1gauge}, the closure of $\eta$ is given by
\[d\eta = Ad_{A^{-1}}\cdot ((d+Ad_{A}\mathcal{N}^{L})(\zeta_{C\wedge L_{0}} + \zeta_{C\wedge L_{0}^{\perp}})).\]
Thus $\eta$ is closed if and only if 
\[ (d+Ad_{A}\mathcal{N}^{L})(\zeta_{C\wedge L_{0}} + \zeta_{C\wedge L_{0}^{\perp}}) = 0.\]
By splitting this into $C\wedge L_{0}$ and $C\wedge L_{0}^{\perp}$ parts, we see that this is equivalent to 
\begin{align}
d\zeta_{C\wedge L_{0}} + [Ad_{A}\mathcal{N}^{L}\wedge  \zeta_{C\wedge L_{0}^{\perp}}] &=0 \label{eqn:dcl0}\\
d \zeta_{C\wedge L_{0}^{\perp}} + [Ad_{A}\mathcal{N}^{L}\wedge  \zeta_{C\wedge L_{0}}] &=0.\label{eqn:dcl0perp}
\end{align}
Since $Ad_{A}\mathcal{N}^{L}(T_{1})=0$ and $\zeta_{C\wedge L_{0}}(T_{1})=0$, \eqref{eqn:dcl0perp} becomes
\begin{equation} 
\label{eqn:dcl0perp2}
d \zeta_{C\wedge L_{0}^{\perp}} = 0.
\end{equation}

Consider a gauge transformation 
\[ \tilde{\eta} = \eta - d\tau = Ad_{A^{-1}}\cdot (\tilde{\zeta}_{C\wedge L_{0}} + \tilde{\zeta}_{C\wedge L_{0}^{\perp}})\]
of $\eta$. We may write $\tau = A^{-1}\cdot \rho$ for some $\rho \in \Gamma (\wedge^{2} C)$. It is then straightforward to see that 
\[ \tilde{\zeta}_{C\wedge L_{0}} = \zeta_{C\wedge L_{0}} - [Ad_{A^{-1}}\mathcal{N}^{L}, \rho] \quad \text{and}\quad 
\tilde{\zeta}_{C\wedge L^{\perp}_{0}} =\zeta_{C\wedge L^{\perp}_{0}} - d\rho.\]
Now, $\zeta_{C\wedge L_{0}^{\perp}}(T_{2})\le \wedge^{2}C$, and since $C$ doesn't depend on $I_{2}$, it follows that $d\rho(T_{2})\le \wedge^{2} C$. Thus, we may choose $\rho$ so that $\tilde{\zeta}_{C\wedge L_{0}^{\perp}}(T_{2})=0$. From~\eqref{eqn:dcl0perp2} and Lemma~\ref{lem:Q1neq0}, we can then deduce\footnote{$\tilde{\xi}(T_{2})=0$ implies that $\tilde{\xi} = g \, du$ for some smooth function $g$. Equation~\eqref{eqn:dcl0perp2} then implies that $g_{v} = 0$, and thus $g$ is a function of $u$ only.}  that $[\tilde{\xi}]$ with $\tilde{\xi}:=\tilde{\zeta}_{C\wedge L_{0}^{\perp}}$ is a polarisation of $C$ with quadratic differential $\mathcal{Q}^{\tilde{\xi}} = \mathcal{Q}^{\eta}_{1}$. 

On the other hand, \eqref{eqn:dcl0} can be reformulated as
\[(d+ t\zeta_{C\wedge L_{0}^{\perp}})(Ad_{A}\mathcal{N}^{L} + t\zeta_{C\wedge L_{0}}) = 0.\]
Therefore we arrive at the following result: 

\begin{proposition}
\label{prop:etaxiclosed}
$\eta = Ad_{A^{-1}}\cdot (\zeta_{C\wedge L_{0}} + \zeta_{C\wedge L_{0}^{\perp}})$ is closed with non-zero quadratic differential $\quaddiff^{\eta}$ if and only if, after an appropriate gauge transformation, $[\xi]$ with $\xi:=  \zeta_{C\wedge L_{0}^{\perp}}$ is a polarisation of $C$ and 
\begin{equation}
\label{eqn:dtxi}
(d+ t\xi)(Ad_{A}\mathcal{N}^{L} + t\zeta_{C\wedge L_{0}}) = 0.
\end{equation}
\end{proposition}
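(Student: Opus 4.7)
The proof proposal is essentially to collect the preparatory computation carried out in the text immediately before the statement and present it as a formal equivalence. The key identity is equation~\eqref{eqn:A-1gauge}, which gives
\[d\eta = Ad_{A^{-1}}\cdot \bigl((d+Ad_{A}\mathcal{N}^{L})(\zeta_{C\wedge L_{0}} + \zeta_{C\wedge L_{0}^{\perp}})\bigr),\]
so closure of $\eta$ is equivalent to the vanishing of the bracketed expression. Since $[Ad_{A}\mathcal{N}^{L}\wedge \zeta_{C\wedge L_{0}^{\perp}}]\in \Omega^{2}(C\wedge L_{0})$ and $[Ad_{A}\mathcal{N}^{L}\wedge \zeta_{C\wedge L_{0}}]\in \Omega^{2}(C\wedge L_{0}^{\perp})$, I would split the expression into its $C\wedge L_{0}$ and $C\wedge L_{0}^{\perp}$ components to obtain the two conditions~\eqref{eqn:dcl0} and~\eqref{eqn:dcl0perp}.

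Next I would exploit the structural constraints already recorded: because $Ad_{A}\mathcal{N}^{L}(T_{1})=0$ (since $\mathcal{N}^{L}(T_{1})=0$) and $\zeta_{C\wedge L_{0}}(T_{1})=0$, the bracket in~\eqref{eqn:dcl0perp} vanishes and this equation simplifies to $d\zeta_{C\wedge L_{0}^{\perp}}=0$. I would then use the gauge freedom $\tau = A^{-1}\cdot\rho$ with $\rho\in \Gamma(\wedge^{2}C)$ to normalise the potential: as shown in the text, $\rho$ can be chosen so that $\tilde{\zeta}_{C\wedge L_{0}^{\perp}}(T_{2})=0$ (this is possible because $\zeta_{C\wedge L_{0}^{\perp}}(T_{2})\le \wedge^{2}C$ and $C$ is constant in $v$). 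Combining $\tilde{\zeta}_{C\wedge L_{0}^{\perp}}(T_{2})=0$ with $d\tilde{\zeta}_{C\wedge L_{0}^{\perp}}=0$ yields $\tilde{\xi}\in \Omega^{1}(C\wedge C^{\perp})$ depending only on the $u$-variable; this is the defining form of a polarisation of the Legendre curve $C$. The non-triviality $\mathcal{Q}^{\tilde{\xi}}\neq 0$ is inherited from $\mathcal{Q}^{\eta}_{1}\neq 0$, which in turn follows from $\mathcal{Q}^{\eta}\neq 0$ together with Lemma~\ref{lem:Q1neq0}.

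For the first equation, I would rewrite~\eqref{eqn:dcl0} as
\[ d(Ad_{A}\mathcal{N}^{L}) + [\tilde{\xi}\wedge \tilde{\zeta}_{C\wedge L_{0}}] + t\bigl(d\tilde{\zeta}_{C\wedge L_{0}} + [\tilde{\xi}\wedge Ad_{A}\mathcal{N}^{L}]\bigr) = 0\]
modulo organising by powers of $t$, and observe that this is precisely the expansion of $(d+t\tilde{\xi})(Ad_{A}\mathcal{N}^{L} + t\tilde{\zeta}_{C\wedge L_{0}})=0$ once one remembers that $\tilde{\zeta}_{C\wedge L_{0}}\wedge \tilde{\zeta}_{C\wedge L_{0}}=0$ for degree reasons (values in the abelian piece $C\wedge L_{0}$). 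The converse direction is a straightforward unwinding of the same identities, since every step above is reversible once the gauge normalisation is undone.

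The main obstacle I expect is the bookkeeping in the gauge normalisation step: one must verify carefully that the gauge transformation $\tau = A^{-1}\cdot\rho$ preserves the decomposition $\zeta_{C\wedge L_{0}}+\zeta_{C\wedge L_{0}^{\perp}}$ in the form stated, and that $\rho$ can indeed be chosen globally (on the rectangular domain $I_{1}\times I_{2}$) so that $\tilde{\zeta}_{C\wedge L_{0}^{\perp}}(T_{2})=0$. Everything else reduces to identifying the two pieces of $d\eta=0$ with the two parts of the claimed flatness equation and invoking Lemma~\ref{lem:Q1neq0} at the single point where one needs to pass from $d\tilde{\xi}=0$ to \emph{polarisation} rather than merely a closed form.
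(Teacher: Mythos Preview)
Your proposal is correct and follows exactly the paper's approach: the proposition is stated after the preparatory computation, and your outline simply packages that computation as the proof.

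One small correction in your $t$-expansion: the $t^{2}$ coefficient of $(d+t\tilde{\xi})(Ad_{A}\mathcal{N}^{L}+t\tilde{\zeta}_{C\wedge L_{0}})$ is $[\tilde{\xi}\wedge\tilde{\zeta}_{C\wedge L_{0}}]$, not $[\tilde{\zeta}_{C\wedge L_{0}}\wedge\tilde{\zeta}_{C\wedge L_{0}}]$, and $C\wedge L_{0}$ is \emph{not} abelian (indeed $[a\wedge l_{0},b\wedge l_{0}]=-(l_{0},l_{0})\,a\wedge b\neq 0$). The term $[\tilde{\xi}\wedge\tilde{\zeta}_{C\wedge L_{0}}]$ does vanish, but for a different reason: $\tilde{\xi}$ takes values in $C\wedge(C^{\perp}\cap L_{0}^{\perp})$ and $\tilde{\zeta}_{C\wedge L_{0}}$ in $C\wedge L_{0}$, and all four pairings $(a,c)$, $(a,l_{0})$, $(b,c)$, $(b,l_{0})$ appearing in the bracket formula are zero by the orthogonality relations. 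Similarly the $t^{0}$ term $d(Ad_{A}\mathcal{N}^{L})$ vanishes automatically since $A^{-1}\cdot(d+Ad_{A}\mathcal{N}^{L})=d$ is flat and $\mathcal{N}^{L}(T_{1})=0$.
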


Fix a unit length vector $l_{0}\in L_{0}$. Then 
\[ (Ad_{A}\mathcal{N}^{L} + t\zeta_{C\wedge L_{0}})(\tfrac{\partial}{\partial v}) = l_{0}\wedge \nu(t)\]
for some linear $\nu(t)\in (\Gamma L_{0}^{\perp})[t]$. Then~\eqref{eqn:dtxi} implies that 
\begin{equation}
\label{eqn:pcqsec}
(d+ t\xi)|_{T_{1}}\nu(t) = 0.
\end{equation}
Hence, $\nu\in \Gamma W$, where $W$ is the space of linear conserved quantities of $(d+ t\xi)|_{T_{1}}$. By differentiating~\eqref{eqn:pcqsec} repeatedly with respect to $v$, we find that 
\[\nu, \nu_{v}, \nu_{vv}, ...\in \Gamma W.\] 
We thus have that 
\[ \nu(0), (\nu(0))_{v}, (\nu(0))_{vv},... \in \Gamma W(0),\]
with $\nu(0) = Ad_{A}\mathcal{N}^{L}(\tfrac{\partial}{\partial v})l_{0}$. Now 
\begin{align*} 
L &= A^{-1}L_{0}, \\
\langle l, l_{v}\rangle  &= A^{-1}\langle l_{0},  \nu(0)\rangle,\\ 
\mathcal{H}_{1} =  \langle l, l_{v}, l_{vv}\rangle &=  A^{-1}\langle l_{0},    \nu(0), (\nu(0))_{v}\rangle,
\end{align*}
and so on. In particular, since $\mathcal{H}_{1}$ has rank $3$, we must have that  $\dim W(0)\ge 2$. Hence $\dim W \ge 2$. On the other hand, by Proposition~\ref{prop:4lcq}, since $C$ is non-circular, we have that $\dim W \le 3$. Therefore, we arrive at two possibilities:
\begin{itemize}
\item $\dim W =2$: then $\mathcal{H}_{1}$ is constant, implying that $f$ has two families of spherical curvature lines by Proposition~\ref{prop:Hiconst}. 
\item $\dim W =3$: then $L$ lives in the constant $4$-dimensional space $L_{0}\oplus W(0)$ and $C$ is constrained elastic curve with respect to $W(0)$ (see Definition~\ref{def:conelas}). 
\end{itemize}

Conversely, suppose that $L$ lives in a constant subspace $L_{0}\oplus_{\perp} W_{0}$, with $W_{0}$ either $2$- or $3$-dimensional. We assume that there exists a polarisation $[\xi]$ of $C$ such that $d+t\xi$ admits a space $W$ of linear conserved quantities with $W_{0}\le W(0)$. Depending on the dimension of $W_{0}$ we have two cases: 
\begin{itemize}
\item If $W_{0}$ is $2$-dimensional, then by Lemma~\ref{lem:2dimlcq} there always exists such a polarisation $[\xi]$. 
\item If $W_{0}$ is $3$-dimensional, then $C$ is a constrained elastic curve with respect to $W_{0}$. 
\end{itemize}

Now $Ad_{A}\mathcal{N}^{L} \in \Gamma T_{2}^{*}\otimes (L_{0}\wedge W_{0})$ and we can find a $1$-form $\zeta_{C\wedge L_{0}}\in \Gamma T_{2}^{*}\otimes(L_{0}\wedge C) $ such that 
\begin{equation}
\label{eqn:dtxi2}
(d+t\xi)(Ad_{A}\mathcal{N}^{L} + t \zeta_{C\wedge L_{0}}).
\end{equation}
Define
\[ \eta = Ad_{A^{-1}}\cdot (\zeta_{C\wedge L_{0}} + \xi) \in \Omega^{1}(f\wedge f^{\perp}).\]
Since $\xi$ is a polarisation of $C$ and~\eqref{eqn:dtxi2} holds, we have by Proposition~\ref{prop:etaxiclosed} that $\eta$ is closed and $\mathcal{Q}^{\eta}\neq 0$. Hence, $f$ is a Lie applicable surface. 
We have thus arrived at the following theorem: 

\begin{theorem}
\label{thm:lieappsph}
Suppose that $f=A^{-1}C:I_{1}\times I_{2}\to\mathcal{Z}$ is a regular umbilic-free Legendre map with one family of spherical curvature lines, where $A$ is the spherical evolution map along $L$ based at $v_{0}\in I_{2}$ and $C$ is a Legendre curve in $L_{0}^{\perp}$. Then $f$ is Lie applicable if and only if either the other family of curvature lines are also spherical or 
$L$ lies in a constant $4$-dimensional space $L_{0}\oplus_{\perp} W_{0}$ and $C$ is a constrained elastic curve with respect to $W_{0}$.
\end{theorem}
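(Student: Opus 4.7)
The plan is to exploit the reduction already established in Proposition~\ref{prop:etaxiclosed}, which translates closure of a gauge potential $\eta = Ad_{A^{-1}}(\zeta_{C\wedge L_{0}} + \zeta_{C\wedge L_{0}^{\perp}})$ with $\mathcal{Q}^{\eta}\neq 0$ into the existence (after gauge) of a polarisation $[\xi]$ of $C$ together with the equation
\[
(d+t\xi)\bigl(Ad_{A}\mathcal{N}^{L} + t\,\zeta_{C\wedge L_{0}}\bigr) = 0.
\]
The strategy is therefore to encode this equation as a statement about a space of linear conserved quantities of $(d+t\xi)|_{T_{1}}$, and then read off the two cases from a dimension count.

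For the forward direction, fix a unit vector $l_{0}\in L_{0}$ and write $(Ad_{A}\mathcal{N}^{L}+t\zeta_{C\wedge L_{0}})(\tfrac{\partial}{\partial v}) = l_{0}\wedge \nu(t)$ for some $\nu(t)\in (\Gamma L_{0}^{\perp})[t]$ of degree at most $1$. The displayed equation restricted to $T_{1}$ becomes $(d+t\xi)|_{T_{1}}\nu(t)=0$, so $\nu$ is a section of the bundle $W$ of linear conserved quantities of $(d+t\xi)|_{T_{1}}$. Since $C$ depends only on $u$, differentiation in $v$ shows that $\nu(0),\,\partial_{v}\nu(0),\,\partial_{v}^{2}\nu(0),\dots\in \Gamma W(0)$, while the relations $L = A^{-1}L_{0}$ and $\mathcal{H}_{1} = A^{-1}\langle l_{0},\nu(0),(\nu(0))_{v}\rangle$ let us compare $\mathcal{H}_{1}$ with $A^{-1}(L_{0}\oplus W(0))$. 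Regularity forces $\mathcal{H}_{1}$ to have rank $3$, hence $\dim W\geq 2$. Assuming $C$ is non-circular, Proposition~\ref{prop:4lcq} bounds $\dim W\leq 3$, producing exactly two cases: if $\dim W=2$, then $\mathcal{H}_{1}$ is constant, so by Proposition~\ref{prop:Hiconst} the $v$-curvature lines are also spherical; if $\dim W=3$, then $L$ lies in the constant subspace $L_{0}\oplus_{\perp} W(0)$ and $C$ is constrained elastic with respect to $W(0)$ by Theorem~\ref{thm:constrained} and Definition~\ref{def:conelas}.

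For the converse, we reverse-engineer $\eta$. If the other family is also spherical, $\mathcal{H}_{1}$ is constant of rank $3$, giving a $2$-dimensional $W_{0}\leq L_{0}^{\perp}$ into which $L$ maps; Lemma~\ref{lem:2dimlcq} then supplies a polarisation $[\xi]$ of $C$ with a $2$-dimensional space $W$ of linear conserved quantities of $d+t\xi$ containing $W_{0}$ at $t=0$. If instead $L\subset L_{0}\oplus_{\perp} W_{0}$ and $C$ is constrained elastic with respect to $W_{0}$, Definition~\ref{def:conelas} directly supplies $[\xi]$ with a $3$-dimensional $W$. In either case, since $Ad_{A}\mathcal{N}^{L}\in \Gamma T_{2}^{*}\otimes (L_{0}\wedge W_{0})$, the sections of $W$ let one solve for $\zeta_{C\wedge L_{0}}\in \Gamma T_{2}^{*}\otimes(L_{0}\wedge C)$ satisfying $(d+t\xi)(Ad_{A}\mathcal{N}^{L}+t\zeta_{C\wedge L_{0}})=0$; setting $\eta := Ad_{A^{-1}}\cdot(\zeta_{C\wedge L_{0}}+\xi)$ and applying Proposition~\ref{prop:etaxiclosed} produces the required gauge potential.

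The main obstacle will be verifying $\mathcal{Q}^{\eta}\neq 0$ in the converse direction; this reduces to ensuring $\mathcal{Q}^{\xi}\neq 0$, which is precisely the polarisation hypothesis and is automatic in the constrained elastic case (from the arclength polarisation of Lemma~\ref{lem:arcpol2}) and in the $2$-dimensional case from the construction in Lemma~\ref{lem:2dimlcq}. A minor subtlety to dispatch is the circular case for $C$, which fits into the constrained elastic branch trivially and so can be subsumed without issue.
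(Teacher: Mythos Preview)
Your proposal is correct and follows essentially the same approach as the paper's own argument: reducing via Proposition~\ref{prop:etaxiclosed}, writing $(Ad_{A}\mathcal{N}^{L}+t\zeta_{C\wedge L_{0}})(\tfrac{\partial}{\partial v})=l_{0}\wedge\nu(t)$, recognising $\nu$ and its $v$-derivatives as linear conserved quantities, and then performing the dimension count $2\le\dim W\le 3$ via rank of $\mathcal{H}_{1}$ and Proposition~\ref{prop:4lcq}; the converse likewise matches, invoking Lemma~\ref{lem:2dimlcq} in the $2$-dimensional case and Definition~\ref{def:conelas} in the $3$-dimensional case to produce $[\xi]$ and then reconstruct $\eta$. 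One small remark: regularity of $f$ already forces $C$ to be non-circular (a circular $C$ would make $f$ a channel surface), so your closing comment about subsuming the circular case is unnecessary, though harmless.
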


\begin{figure}
	\centering
	\includegraphics[width=0.98\textwidth]{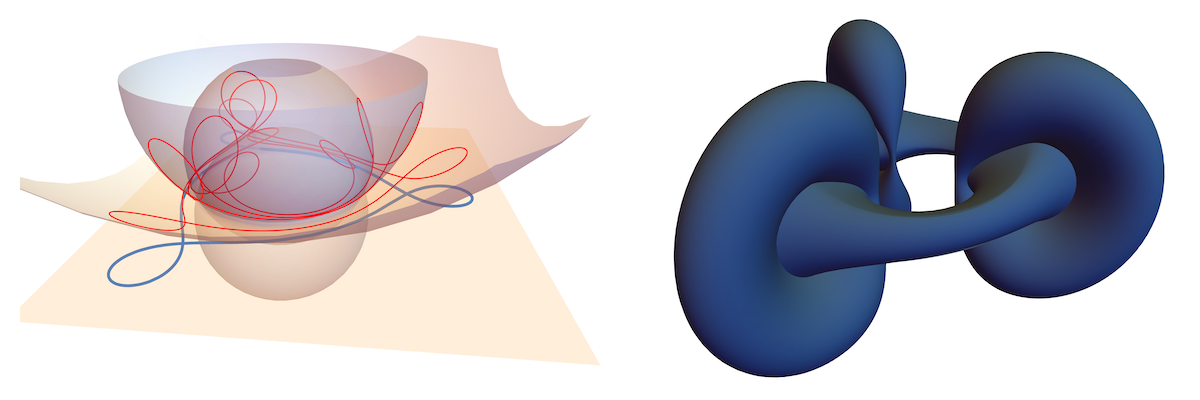}
	\caption{An example of a Lie applicable torus with spherical curvature lines (on the right) lying on spheres whose centres all belong to the same line. The initial curve is in bicycle correspondence with a circle (on the left).}
	\label{fig:lieApp1}
\end{figure}

\begin{figure}
	\centering
	\includegraphics[width=0.98\textwidth]{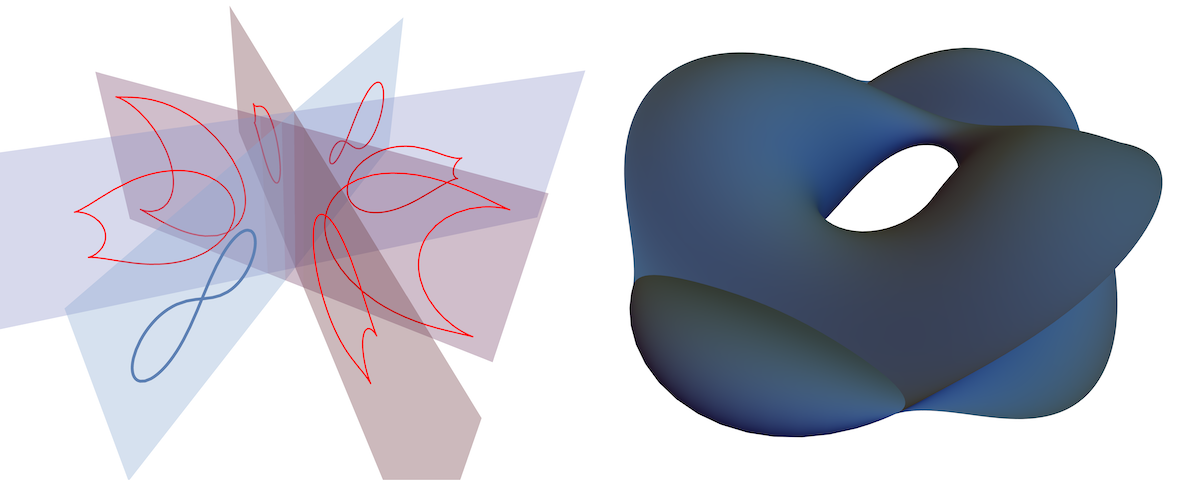}
	\caption{An example of a Lie applicable torus with planar curvature lines (on the right) lying on planes that are orthogonal to a fixed plane. The initial curve is an elastic lemniscate (on the left).}
	\label{fig:lieApp2}
\end{figure}

The condition that $L$ lies in a constant $4$-dimensional space $L_{0}\oplus_{\perp} W_{0}$ has different interpretations depending on the signature of $W_{0}$. 

If $W_{0}$ contains a unit timelike vector $\mathfrak{p}$, then $S= \pi_{\mathfrak{p}}(L)$ lies in a constant $3$-dimensional subspace of $\langle\mathfrak{p}\rangle^{\perp}$. Using~\cite[Section 1.8.5]{H2003}, one then determines the nature of $S$ after projecting to an appropriate Euclidean geometry:  
\begin{itemize} 
\item if $W_{0}$ has signature $(1,2)$ then $S$ is a family of spheres whose centres lie on a fixed line (see Figure \ref{fig:lieApp1});
\item if $W_{0}$ has signature $(2,1)$ then $S$ is a family of planes intersecting a fixed point;
\item if $W_{0}$ has signature $(1,1,1)$ then $S$ is a family of planes that intersect a fixed plane orthogonally (see Figure \ref{fig:lieApp2}). 
\end{itemize}

If $(L_{0}\oplus W_{0})^{\perp}$ contains a unit timelike vector $\mathfrak{p}$ then Proposition~\ref{prop:planorth} tells us that $S=\pi_{\mathfrak{p}}(L)$ intersects the point sphere map $f\cap\langle \mathfrak{p}\rangle^{\perp}$ orthogonally. After projecting to an appropriate Euclidean geometry we have the following additional cases: 
\begin{itemize}
\item If $W_{0}$ has signature $(3,0)$, then $S$ belongs to a hyperbolic sphere complex. 
\item If $W_{0}$ has signature $(2,0,1)$, then we may choose a space form vector $\mathfrak{q}\in (L_{0}\oplus W_{0})^{\perp}$. It then follows from Proposition~\ref{prop:monge} that $f$ projects to a Monge surface in $\mathfrak{Q}^{3}$. 
\end{itemize}

The only other possibility that remains is that $W_{0}$ has signature $(1,0,2)$. One then has that the sphere complexes defined by $L$ contain a fixed contact element, namely, the null $2$-dimensional subspace contained in $W_{0}$. 

A straightforward implication of Theorem~\ref{thm:lieappsph} is: 

\begin{theorem}\label{thm:cool}
If a Lie applicable surface has exactly one family of spherical curvature lines then all members of this family are Lie sphere transforms of a certain constrained elastic curve. 
\end{theorem}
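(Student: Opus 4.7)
The plan is to derive Theorem~\ref{thm:cool} as an almost immediate corollary of Theorem~\ref{thm:lieappsph} combined with the structural description of surfaces with a family of spherical curvature lines from Theorem~\ref{thm:A}.

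First, I would set up the situation as in Subsection~\ref{subsec:sphevo}: locally write the Lie applicable surface as $f = A^{-1}C : I_1 \times I_2 \to \mathcal{Z}$, where the $u$-parameter lines are the spherical curvature lines, $L : I_2 \to \mathcal{S}$ is the (constant along $T_1$) osculating sphere complex, $A$ is the spherical evolution map along $L$ based at some $v_0 \in I_2$, and $C : I_1 \to \mathcal{Z}$ is a Legendre curve in $L_0^\perp \cong \mathbb{R}^{3,2}$ with $L_0 = L(v_0)$. This part requires no new work, just invoking Theorem~\ref{thm:A}.

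Next, I would invoke Theorem~\ref{thm:lieappsph}. Since $f$ is assumed to be Lie applicable, one of the two alternatives there must hold. By hypothesis $f$ has \emph{exactly} one family of spherical curvature lines, so the first alternative (both families spherical) is ruled out. Therefore the second alternative must hold: $L$ lies in some constant $4$-dimensional subspace $L_0 \oplus_\perp W_0 \le \mathbb{R}^{4,2}$ and, crucially, $C$ is a constrained elastic curve with respect to $W_0$ in the sense of Definition~\ref{def:conelas}.

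Finally, I would observe that for any $v \in I_2$, the $u$-parameter curve $v = \text{const}$ of $f$ is exactly $A^{-1}(v)\, C$. Since $A(v) \in \Ortho(4,2)$ is a Lie sphere transformation, each such curve is a Lie sphere transform of the fixed initial curve $C$. Combined with the conclusion of the previous paragraph that $C$ is constrained elastic, this gives the statement. There is no real obstacle here beyond carefully quoting the previous two theorems; the entire content of Theorem~\ref{thm:cool} is already encoded in them once one notes that the first alternative of Theorem~\ref{thm:lieappsph} is excluded by hypothesis.
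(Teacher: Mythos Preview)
Your proposal is correct and matches the paper's approach exactly: the paper presents Theorem~\ref{thm:cool} simply as ``a straightforward implication of Theorem~\ref{thm:lieappsph}'' with no further argument, and your write-up just unpacks that implication via Theorem~\ref{thm:A} and the exclusion of the two-families alternative.
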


\subsection{Surfaces with two families of spherical curvature lines}
\label{subsec:twosphcurv}
As we saw in Theorem~\ref{thm:lieappsph}, surfaces with two families of spherical curvature lines are Lie applicable. In this subsection we shall give an alternative proof of this using a curved flat characterisation developed in~\cite{BP2020}. 

Suppose that $f$ is a regular umbilic-free Legendre map and suppose that both families of curvature lines of $f$ are spherical. Then by Proposition~\ref{prop:sphLi} we have that both osculating complexes $L_{i}$ are constant along $T_{i}$. Now we can define a bundle of $(2,2)$-planes $W:=  (L_{1}\oplus L_{2})^{\perp}$ with $f\le W$. This yields a splitting of the trivial bundle $\underline{\mathbb{R}}^{4,2} = W\oplus W^{\perp}$ and a splitting of the trivial connection 
\[ d = \mathcal{D}^{W}+ \mathcal{N}^{W},\]
where $\mathcal{D}^{W}$ is the sum of the induced connections on $W$ and $W^{\perp}$ and $\mathcal{N}^{W}\in \Omega^{1}(W\wedge W^{\perp})$. Now fix unit length sections $l_{i}\in \Gamma L_{i}$. Then 
\[ \mathcal{N}^{W} = l_{1}\wedge dl_{1} + l_{2}\wedge dl_{2},\]
and since $d|_{T_{i}}l_{i} = 0$, we have that $d\mathcal{N}^{W}=0$. Hence, $W$ is a curved flat in $G_{2,2}(\mathbb{R}^{4,2})$, where $G_{2,2}(\mathbb{R}^{4,2})$ denotes the Grassmannian of $(2,2)$ planes in $\mathbb{R}^{4,2}$. It then follows from~\cite[Corollary 3.8]{BP2020} that $f$ is Lie applicable. To summarise:

\begin{proposition}
Suppose that both families of curvature lines of a regular umbilic-free Legendre map $f$ are spherical. Then $f$ is Lie applicable and $(L_{1}\oplus L_{2})^{\perp}$ is a curved flat in $G_{2,2}(\mathbb{R}^{4,2})$. 
\end{proposition}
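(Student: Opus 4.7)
The plan is to proceed exactly along the lines sketched in the paragraph preceding the proposition, so the proof is essentially an unpacking of the curved flat criterion of \cite{BP2020}. First I would invoke Proposition~\ref{prop:sphLi} to deduce that both osculating sphere complexes $L_{1}$ and $L_{2}$ are constant along $T_{1}$ and $T_{2}$, respectively. Since the $L_{i}$ are spacelike and, by Lemma~\ref{lem:osc}, satisfy $f\perp L_{i}$ together with $f_{i}=f\oplus L_{3-i}$, the subbundle $W:=(L_{1}\oplus L_{2})^{\perp}$ is a rank $4$ subbundle of $\underline{\mathbb{R}}^{4,2}$ of signature $(2,2)$ containing $f$.

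Next I would split the trivial bundle as $\underline{\mathbb{R}}^{4,2}=W\oplus W^{\perp}$ with $W^{\perp}=L_{1}\oplus L_{2}$, and correspondingly decompose
\[
	d=\mathcal{D}^{W}+\mathcal{N}^{W},\qquad \mathcal{N}^{W}\in \Omega^{1}(W\wedge W^{\perp}).
\]
Choosing unit spacelike sections $l_{i}\in \Gamma L_{i}$, the off-diagonal part may be written as
\[
	\mathcal{N}^{W}=l_{1}\wedge dl_{1}+l_{2}\wedge dl_{2}.
\]
The key computation is then to verify $d\mathcal{N}^{W}=0$. This follows from the two facts that $d|_{T_{i}}l_{i}=0$ (by constancy of $L_{i}$ along $T_{i}$) and $(l_{i},dl_{i})=0$ (since $l_{i}$ has unit length), so that each summand $l_{i}\wedge dl_{i}$ is a decomposable $1$-form supported on a single curvature direction; taking the exterior derivative and expanding gives a sum of terms each of which vanishes by these two properties.

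With $d\mathcal{N}^{W}=0$ established, by definition $W$ is a curved flat in $G_{2,2}(\mathbb{R}^{4,2})$. The Lie applicability of $f$ then follows directly from \cite[Corollary 3.8]{BP2020}, which associates a Lie applicable Legendre map to every such curved flat containing $f$. The only potential subtlety I anticipate is verifying that $W$ has the claimed rank and signature generically, i.e.\ that $L_{1}(x)$ and $L_{2}(x)$ are linearly independent at every point; this however is immediate since the $L_{i}$ are spacelike and lie in the distinct bundles $S_{i}$ satisfying $S_{1}\perp S_{2}$. Thus no serious obstacle arises, and the argument reduces to the clean identification of the curved flat $W=(L_{1}\oplus L_{2})^{\perp}$.
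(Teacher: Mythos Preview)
Your proposal is correct and follows essentially the same argument as the paper: invoke Proposition~\ref{prop:sphLi} for constancy of the $L_{i}$, write $\mathcal{N}^{W}=l_{1}\wedge dl_{1}+l_{2}\wedge dl_{2}$ for unit sections $l_{i}$, use $d|_{T_{i}}l_{i}=0$ to deduce $d\mathcal{N}^{W}=0$, and conclude via \cite[Corollary~3.8]{BP2020}. The additional remarks you make about the signature of $W$ and the linear independence of $L_{1},L_{2}$ are sound and simply flesh out points the paper leaves implicit.
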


From~\cite[Theorem 3.9]{BP2020} we have that the $\mathcal{D}^{W}$-parallel null rank 2 subbundles of $W$ form two $1$-parameter families of Lie applicable Legendre maps $\{f_{\alpha}\}$,$\{\hat{f}_{\beta}\}$ parametrised by $\alpha, \beta \in \mathbb{R}P^{1}$, such that for some $m\in\mathbb{R}^{\times}$ any pair $(f_{\alpha},\hat{f}_{\beta})$ is an $m$-Darboux pair. Without loss of generality, assume that $f\in \{f_{\alpha}\}$.  Note that members of the same family are complementary, i.e., 
\[ f_{\alpha_{1}}\cap f_{\alpha_{2}} = \{0\} = \hat{f}_{\beta_{1}}\cap \hat{f}_{\beta_{2}} ,\]
for all $\alpha_{1},\alpha_{2},\beta_{1},\beta_{2}\in \mathbb{R}P^{1}$. From Theorem~\ref{thm:Lsph}, we can deduce that all members of these families have two families of spherical curvature lines.

Suppose that $f$ projects to a Joachimsthal surface in an appropriate Euclidean geometry. Then from Subsection~\ref{subsec:sphsymbreak}, we have that the osculating bundles $\mathcal{H}_{1}$ and $\mathcal{H}_{2}$ are constant $(2,1)$ spaces. Thus $V_{i}:=\mathcal{H}_{i}\cap W$ are $(1,1)$ subbundles of $W$ and we denote by $\hat{s}^{j}_{i}\le V_{i}$ the corresponding null line bundles, for $j\in \{1,2\}$. Since $\mathcal{H}_{i}$ is constant it follows that each $\hat{s}_{i}^{j}$ is $\mathcal{D}^{W}$-parallel and, since $L_{i}$ is constant along $T_{i}$, it follows that $\hat{s}_{i}^{j}$ is constant along $T_{i}$. Note that we cannot have that any of $\hat{s}_{i}^{j}$ are constant since this would contradict that $\mathcal{H}_{i}$ are rank $3$. By taking various combinations $\hat{f}^{j_{1}j_{2}}=\hat{s}_{1}^{j_{1}}\oplus  \hat{s}_{2}^{j_{2}}$ with $j_{1},j_{2}\in \{1,2\}$, we obtain four Legendre maps contained in $W$ that are $\mathcal{D}^{W}$-parallel, and since $\hat{s}_{i}^{j_{1}}$ is constant along $T_{i}$, it follows that $\hat{f}^{j_{1}j_{2}}$ are Dupin cyclides. We also have that the pair $\hat{f}^{11}$, $\hat{f}^{22}$ are complementary and both intersect the complementary pair $\hat{f}^{12}$, $\hat{f}^{21}$. Without loss of generality, assume that $\hat{f}^{11},\hat{f}^{22} \in \{f_{\alpha}\}$ and 
$\hat{f}^{12},\hat{f}^{21}\in \{\hat{f}_{\beta}\}$. Thus, $\hat{f}^{12}$ and $\hat{f}^{21}$ are Darboux transforms of $f$. We thus arrive at the following theorem that gives a converse to Proposition~\ref{prop:ribdupin}: 

\begin{theorem}
\label{thm:joachimsthal}
Joachimsthal surfaces are Darboux transforms of two Dupin cyclides. 
\end{theorem}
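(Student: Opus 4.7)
The plan is to exploit the curved flat formalism established in the two preceding paragraphs. Since both families of curvature lines of $f$ are spherical, Proposition~\ref{prop:sphLi} ensures that the osculating complexes $L_1, L_2$ are constant along the respective curvature subbundles, and the bundle $W := (L_1\oplus L_2)^\perp$ is a curved flat whose $\mathcal{D}^W$-parallel null rank-$2$ subbundles split into two one-parameter families $\{f_\alpha\}$, $\{\hat f_\beta\}$ with any cross-family pair forming an $m$-Darboux pair. The strategy is to exhibit two Dupin cyclides in the family \emph{opposite} to the one containing $f$.

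First I would invoke the Joachimsthal characterisation of Subsection~\ref{subsec:sphsymbreak}, which identifies $f$ precisely with the case in which both osculating bundles $\mathcal{H}_1, \mathcal{H}_2$ are constant subspaces of signature $(2,1)$. I then form $V_i := \mathcal{H}_i \cap W$, which is a rank-$2$ subbundle inheriting signature $(1,1)$ because $L_i \subset \mathcal{H}_i$ and $L_i \perp W$; I split $V_i = \hat s_i^1 \oplus \hat s_i^2$ into its two null line subbundles. The core verifications are that (i) each $\hat s_i^j$ is $\mathcal{D}^W$-parallel, obtained by first noting that $V_i$ is $\mathcal{D}^W$-parallel (as the intersection of a $d$-constant bundle with the orthogonal-splitting bundle $W$) and then using that the two null directions of a parallel indefinite $(1,1)$ plane are themselves parallel under the induced metric connection; and (ii) each $\hat s_i^j$ is constant along $T_i$, which follows from the constancy of $L_i$ along $T_i$ together with the fact that $\mathcal{H}_i$ is constant, so the whole splitting of $V_i$ persists along $T_i$. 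Consequently the four combinations $\hat f^{j_1 j_2} := \hat s_1^{j_1} \oplus \hat s_2^{j_2}$ are $\mathcal{D}^W$-parallel null rank-$2$ subbundles, and constancy along both curvature directions makes each of them doubly channel, hence a Dupin cyclide.

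To finish I would sort the four Dupin cyclides into the two one-parameter families using complementarity: $\hat f^{11}\cap\hat f^{22}=\{0\}$ and $\hat f^{12}\cap\hat f^{21}=\{0\}$, whereas any cross-index pair (such as $\hat f^{11}$ and $\hat f^{12}$) shares a common null line. Since members of a single family must be pairwise complementary, this forces the partition $\{\hat f^{11},\hat f^{22}\}\subset\{f_\alpha\}$ and $\{\hat f^{12},\hat f^{21}\}\subset\{\hat f_\beta\}$ (relabelling families if necessary); as $f$ lies in one of them, the Dupin cyclides in the other family are the desired $m$-Darboux transforms of $f$. The main obstacle I anticipate is justifying step (i), specifically that $V_i$ is genuinely $\mathcal{D}^W$-parallel rather than merely lying inside a constant ambient space; this requires noting that $\mathcal{N}^W$ vanishes on $L_1 \oplus L_2$ and using the orthogonality $\mathcal{H}_1 \perp \mathcal{H}_2$ recorded by Blaschke to control the $W^\perp$-components of derivatives of sections of $V_i$. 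A subsidiary check is that each $\hat f^{j_1 j_2}$ remains a nondegenerate rank-$2$ null plane everywhere, which also follows from $\mathcal{H}_1 \perp \mathcal{H}_2$ ruling out coincidences between $\hat s_1^{j_1}$ and $\hat s_2^{j_2}$.
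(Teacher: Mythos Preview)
Your proposal is correct and follows essentially the same route as the paper: form $V_i=\mathcal{H}_i\cap W$ as $(1,1)$ subbundles, extract their null lines $\hat s_i^j$, verify $\mathcal{D}^W$-parallelism and constancy along $T_i$, assemble the four Dupin cyclides $\hat f^{j_1j_2}$, and use complementarity to sort them into the two families so that the pair opposite to $f$ furnishes the Darboux transforms. The paper argues the $\mathcal{D}^W$-parallelism of $\hat s_i^j$ slightly more tersely (directly from constancy of $\mathcal{H}_i$) and adds the observation that no $\hat s_i^j$ can be globally constant (else $\mathcal{H}_i$ would collapse to rank less than three), but these are minor differences in presentation rather than substance.
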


\begin{remark}\label{rem:joachimsthal}
Joachimsthal surfaces have one family of curvature lines that lie on spheres whose centres lie on a fixed line. 
Geometrically the two Dupin cyclides in Theorem~\ref{thm:joachimsthal} are the fixed line, parametrised as the intersection points of the spheres with this fixed line. 
\end{remark}

\begin{remark}
Remark~\ref{rem:joachimsthal} immediately allows one to deduce that cmc tori with planar curvature lines, including the Wente torus, can locally be obtained as a Darboux transform of the circular cylinder using the fact that the parallel constant Gaussian curvature tori are Joachimsthal surfaces.
	We will defer a more detailed discussion of this fact to a future work.
	For an illustration of this, see Figure~\ref{fig:wente}.
\end{remark}

\begin{figure}
	\centering
	\includegraphics[width=0.9\textwidth]{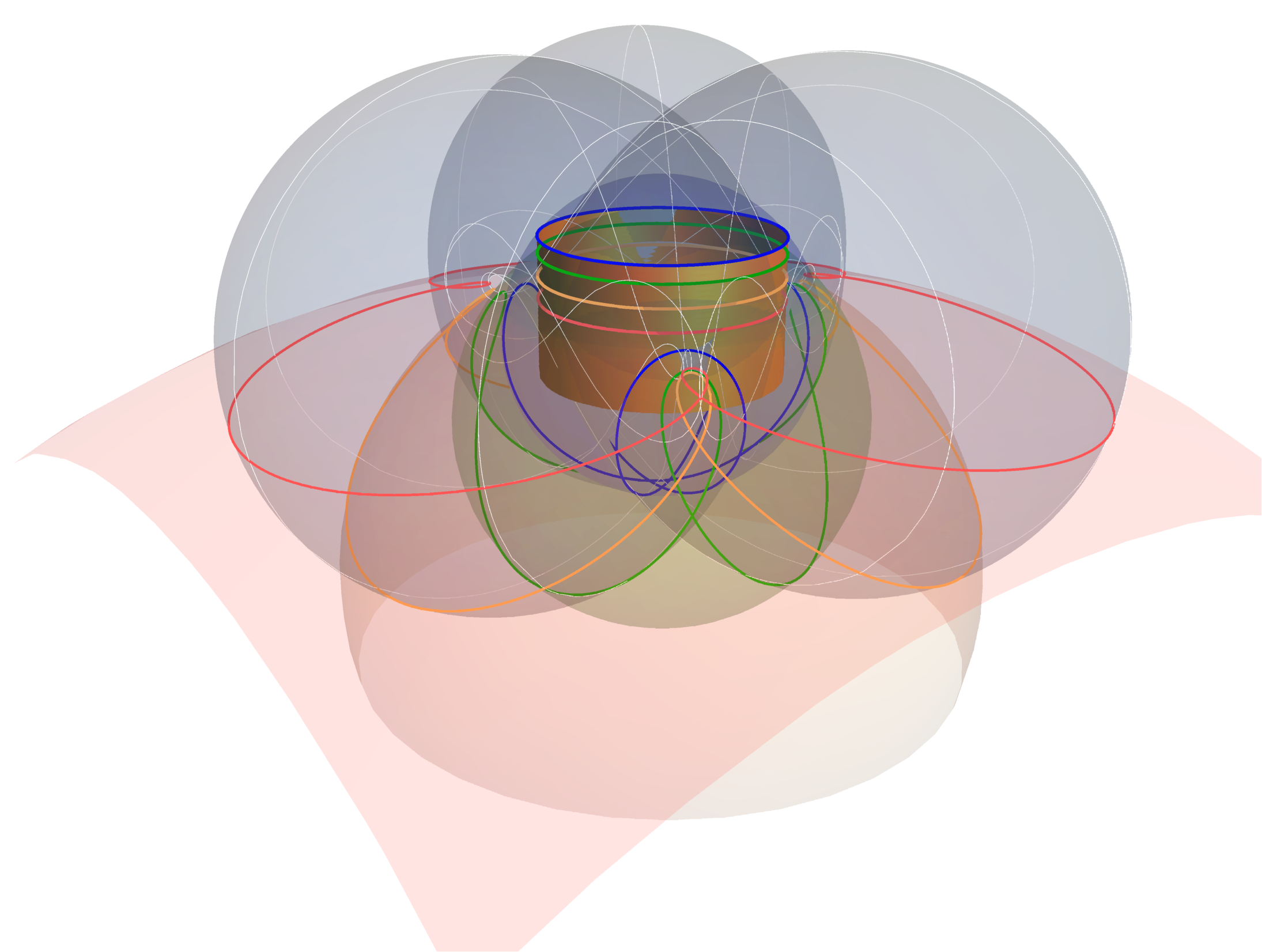}
	\caption{Wente torus obtained as a Darboux transform of the circular cylinder, with a few of the corresponding curvature lines highlighted. The circular cylinder was obtained using the spherical evolution map induced by the spherical curvature lines of the Wente torus.}
	\label{fig:wente}
\end{figure}

\subsection{Calapso transforms} 
Suppose that $f$ is a regular umbilic-free Lie applicable Legendre map with closed $1$-form $\eta$. The Lie cyclide splitting 
of the trivial bundle gives rise to a splitting $\wedge^{2}\underline{\mathbb{R}}^{4,2} = \mathfrak{h}\oplus\mathfrak{m}$, where 
\[\mathfrak{h} := S_{1}\wedge S_{1}\oplus S_{2}\wedge S_{2} \quad \text{and}\quad \mathfrak{m} := S_{1}\wedge S_{2}.\]
We may then write $\eta = \eta_{\mathfrak{h}} + \eta_{\mathfrak{m}}$ where $\eta_{\mathfrak{h}} \in \Omega^{1}(\mathfrak{h})$ and $\eta_{\mathfrak{m}}\in \Omega^{1}(\mathfrak{m})$. In~\cite{P2020} it is shown that within the gauge orbit of $[\eta]$ there is a distinguished gauge potential $\eta^{mid}$ called the middle potential with the property $\eta^{mid}_{\mathfrak{m}}\in \Omega^{1}(\wedge^{2}f)$. 
Now suppose that $f^{t}:= T(t)f$ is a Calapso transform of $f$ where $T(t)$ satisfies 
\[ T(t)\cdot (d+t\eta^{mid})=d. \]
In~\cite[Lemma 4.6]{P2020} it is shown that the Lie cyclides of $f^{t}$ are given by 
\[ S_{1}^{t} = T(t)S_{1} \quad \text{and}\quad S_{2}^{t}= T(t)S_{2}\]
and the subsequent splitting of the trivial bundle $d= \mathcal{D}^{t}+\mathcal{N}^{t}$ satisfies 
\[ \mathcal{D}^{t} = T(t)\cdot(\mathcal{D} + t\eta^{mid}_{\mathfrak{h}}) \quad \text{and}\quad \mathcal{N}^{t}= T(t)\cdot (\mathcal{N}+ t\eta^{mid}_{\mathfrak{m}}).\]
Since $\eta^{mid}_{\mathfrak{m}}\in \Omega^{1}(\wedge^{2}f)$ it follows that 
\[ \mathcal{N}^{t}(T(t)L_{i}) = T(t)(\mathcal{N}+ t\eta^{mid}_{\mathfrak{m}})L_{i} = 0.\]
Thus $L_{i}^{t}:= T(t)L_{i}$ are the osculating complexes of $f^{t}$. Moreover, since $\eta(T_{i})\le f\wedge f_{i}$ and $L_{1}\le f_{2}$ and $L_{2}\le f_{1}$, it follows that 
\[ d_{X_{i}}l_{i}^{t} = T(t)(d_{X_{i}}+t\eta^{mid}(X_{i}))l_{i} = T(t)d_{X_{i}}l_{i},\]
for sections $l_{i}\in \Gamma L_{i}$ and $X_{i}\in \Gamma T_{i}$. Applying Proposition~\ref{prop:sphLi}, we arrive at the following result: 

\begin{proposition}
A Lie applicable surface $f$ has spherical curvature lines along the leaves of $T_{i}$ if and only if any Calapso transform $f^{t}$ has spherical curvature lines along the leaves of $T_{i}$. 
\end{proposition}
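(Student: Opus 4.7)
The plan is to combine the setup carried out in the paragraph immediately preceding the statement with Proposition~\ref{prop:sphLi}, after which the proof becomes essentially a one-line observation; the real content is already in the preceding derivation.

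First, I would record the two pieces of data already produced above: the osculating complexes of the Calapso transform are $L_i^t = T(t) L_i$, and for any unit-length section $l_i \in \Gamma L_i$ the section $l_i^t := T(t) l_i$ is again unit-length (since $T(t) \in \Ortho(4,2)$) and satisfies
\[
d_{X_i} l_i^t = T(t)\, d_{X_i} l_i
\]
for every $X_i \in \Gamma T_i$. Both of these use Lemma~4.6 of~\cite{P2020} together with $\eta^{mid}(X_i)\, l_i = 0$ for $X_i \in \Gamma T_i$.

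Next, I would invoke Proposition~\ref{prop:sphLi} twice: $f$ has spherical curvature lines along $T_i$ iff $L_i$ is constant along $T_i$, and analogously $f^t$ has spherical curvature lines along $T_i$ iff $L_i^t$ is constant along $T_i$. By~\eqref{eqn:dTili}, these two constancy conditions reduce to $d|_{T_i} l_i = 0$ and $d|_{T_i} l_i^t = 0$ respectively. The displayed identity together with the pointwise invertibility of $T(t)$ makes these vanishing conditions equivalent, yielding the biconditional in both directions.

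There is no real obstacle here; the work is entirely in the preparation. The delicate point, which has already been executed just before the statement, is the specific choice of the middle gauge potential $\eta^{mid}$: its defining property $\eta^{mid}_{\mathfrak{m}} \in \Omega^1(\wedge^2 f)$, combined with $\eta(T_i) \le f \wedge f_i$ and the fact that $L_i$ is orthogonal both to $f$ and to $f_i$ (since $L_i \le f_{3-i} \le f^\perp$ and $f_i \perp f_{3-i}$), is what forces $\eta^{mid}(X_i)\, l_i = 0$. Without this careful normalisation of the gauge potential, the Calapso transformation would produce extra terms when differentiating $l_i^t$, and the clean identification of $L_i^t$ as the osculating complex of $f^t$ with the expected behaviour under $d|_{T_i}$ would break down.
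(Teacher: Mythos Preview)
Your proposal is correct and follows essentially the same approach as the paper: the paper places all of the work in the paragraph immediately preceding the proposition (identifying $L_i^t = T(t)L_i$ as the osculating complexes of $f^t$ via the middle potential and deriving $d_{X_i}l_i^t = T(t)\,d_{X_i}l_i$), and then simply invokes Proposition~\ref{prop:sphLi} to conclude. Your summary of why $\eta^{mid}(X_i)\,l_i = 0$ is also in line with the paper's reasoning.
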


\section{Final remarks}

In this paper, we have discussed how every surface with spherical curvature lines is generated by a pair of \emph{initial data}: a Legendre curve and a curve of suitable Lie sphere transformations.
Such observation yields a characterisation of Lie applicable surfaces with spherical curvature lines in terms of constrained elastic curves.
The generality of the theory developed in this paper leads to additional future questions, some of which we list below:
\begin{enumerate}
	\item The Lie sphere geometric characterisation of constrained elastic curves depends on suitable choices of point sphere complex $\mathfrak{p}$ and space form vector $\mathfrak{q}$ in $W_0$, raising the question of finding the characteristics of such curves when projected with respect to different choices of $\mathfrak{p},\mathfrak{q} \in W_0$.
	\item A closer look at the figures in this paper reveals that many Lie applicable surfaces with spherical curvature lines have admissible singularities such as cuspidal edges or swallowtails.
		This is not so surprising since many of the well-known Lie applicable surfaces such as pseudospherical surfaces and linear Weingarten surfaces are known to admit certain types of singularities.
		The theory developed here assumes that the contact lifts are immersed, and hence are fronts, allowing one to contemplate the relationship between the initial data of the Lie applicable surfaces with spherical curvature lines and recognising the types of singularities appearing on them.
	\item A recent work \cite{ogata_ribaucour_nodate} reveals that the contact lifts of Ribaucour transforms of surface of revolutions can fail to immerse, and develop frontal singularities.
		The Ribaucour transforms treated in this work should also have spherical curvature lines; therefore, one could also consider a similar problem of finding the conditions on the initial data for the resulting surface to have frontal singularities.
	\item Within the class of Lie applicable surfaces, many of the well-known surface classes can be determined via the existence of polynomial conserved quantities \cite{BHR2010, BHR2012, BHPR2019}.
		One could consider the additional conditions on the initial data so that the resulting surface admits polynomial conserved quantities, thereby obtaining a characterisation of the surfaces in the various subclasses with spherical curvature lines.
	\item Many of the examples included in this paper are compact: 
A closed initial spherical Legendre curve and a suitable periodic spherical evolution map will result in a closed compact surface with spherical curvature lines.
		Therefore, finding the conditions on the family of elliptic linear sphere complexes $L : \tilde{I} \to \mathbb{R}^{4,2}$ so that the corresponding spherical evolution map becomes periodic is key to obtaining (Lie applicable) tori with spherical curvature lines.
		\end{enumerate}

%Further questions?
%- closed surfaces
%- surfaces with pcq
%- singularities

\begin{bibdiv}
\begin{biblist}

\bib{A1987}{article}{
      author={Abresch, Uwe},
       title={Constant mean curvature tori in terms of elliptic functions},
        date={1987},
     journal={J. Reine Angew. Math.},
      volume={374},
       pages={169\ndash 192},
      review={\MR{876223}},
        doi = {10.1515/crll.1987.374.169},
}

\bib{AGV1985}{book}{
      author={Arnol'd, Vladimir~Igorevich},
      author={{Guse{\u \i}n-Zade}, S.~M.},
      author={Varchenko, A.~N.},
       title={Singularities of differentiable maps. {{Vol}}. {{I}}},
      series={Monographs in {{Mathematics}}},
   publisher={{Birkh\"auser Boston, Inc.}},
     address={{Boston, MA}},
        date={1985},
      volume={82},
        ISBN={978-0-8176-3187-1},
      review={\MR{777682}},
  doi = {10.1007/978-1-4612-5154-5},
}

\bib{B2001}{article}{
      author={Bernstein, Holly},
       title={Non-special, non-canal isothermic tori with spherical lines of
  curvature},
        date={2001},
     journal={Trans. Amer. Math. Soc.},
      volume={353},
      number={6},
       pages={2245\ndash 2274},
      review={\MR{1814069}},
  doi = {10.1090/S0002-9947-00-02691-X},
}

\bib{B1929}{book}{
      author={Blaschke, Wilhelm},
       title={{Vorlesungen \"uber Differentialgeometrie und geometrische
  Grundlagen von Einsteins Relativit\"atstheorie III: Differentialgeometrie der
  Kreise und Kugeln}},
    language={German},
   publisher={{Springer}},
     address={{Berlin}},
        date={1929},
        ISBN={978-3-642-47125-4},
}

\bib{BLPT2020}{article}{
      author={Bobenko, Alexander~I.},
      author={Lutz, Carl O.~R.},
      author={Pottmann, Helmut},
      author={Techter, Jan},
       title={Non-{{Euclidean Laguerre}} geometry and incircular nets},
        date={2020},
      eprint={arXiv:2009.00978},
      url = {http://arxiv.org/abs/2009.00978},
}

\bib{BPP2008}{article}{
      author={Bohle, Christoph},
      author={Peters, G.~Paul},
      author={Pinkall, Ulrich},
       title={Constrained {{Willmore}} surfaces},
        date={2008},
     journal={Calc. Var. Partial Differential Equations},
      volume={32},
      number={2},
       pages={263\ndash 277},
      review={\MR{2389993}},
        doi = {10.1007/s00526-007-0142-5},
}

\bib{B1855}{article}{
      author={Bonnet, O.},
       title={{Observations sur les surfaces minima}},
        date={1855},
     journal={C. R. Acad. Sci. Paris},
      volume={41},
       pages={1057\ndash 1058},
}

\bib{BG2018}{article}{
      author={Brander, David},
      author={Gravesen, Jens},
       title={Monge surfaces and planar geodesic foliations},
        date={2018},
     journal={J. Geom.},
      volume={109},
      number={1},
       pages={4, 1\ndash 14},
      review={\MR{3749636}},
  doi = {10.1007/s00022-018-0413-7},
}

\bib{BG1986}{article}{
      author={Bryant, Robert},
      author={Griffiths, Phillip},
       title={Reduction for constrained variational problems and $\int
  \frac{1}{2}k^2\,ds$},
        date={1986},
     journal={Amer. J. Math.},
      volume={108},
      number={3},
       pages={525–570},
      review={\MR{844630}},
  doi = {10.2307/2374654},
}

\bib{BH2006}{article}{
      author={Burstall, Francis~E.},
      author={{Hertrich-Jeromin}, Udo},
       title={The {{Ribaucour}} transformation in {{Lie}} sphere geometry},
        date={2006},
     journal={Differential Geom. Appl.},
      volume={24},
      number={5},
       pages={503\ndash 520},
      review={\MR{2254053}},
       doi = {10.1016/j.difgeo.2006.04.007}, 
}

\bib{BHMR2016}{article}{
      author={Burstall, Francis~E.},
      author={{Hertrich-Jeromin}, Udo},
      author={M{\"u}ller, Christian},
      author={Rossman, Wayne},
       title={Semi-discrete isothermic surfaces},
        date={2016},
     journal={Geom. Dedicata},
      volume={183},
       pages={43\ndash 58},
      review={\MR{3523116}},
        doi = {10.1007/s10711-016-0143-7},
}

\bib{BHPR2019}{article}{
      author={Burstall, Francis~E.},
      author={{Hertrich-Jeromin}, Udo},
      author={Pember, Mason},
      author={Rossman, Wayne},
       title={Polynomial conserved quantities of {{Lie}} applicable surfaces},
        date={2019},
     journal={Manuscripta Math.},
      volume={158},
      number={3-4},
       pages={505\ndash 546},
      review={\MR{3914961}},
       doi = {10.1007/s00229-018-1033-0},
}

\bib{BHR2010}{article}{
      author={Burstall, Francis~E.},
      author={{Hertrich-Jeromin}, Udo},
      author={Rossman, Wayne},
       title={Lie geometry of flat fronts in hyperbolic space},
        date={2010},
     journal={C. R. Math. Acad. Sci. Paris},
      volume={348},
      number={11-12},
       pages={661\ndash 664},
      review={\MR{2652493}},
      doi = {10.1016/j.crma.2010.04.018},
}

\bib{BHR2012}{article}{
      author={Burstall, Francis~E.},
      author={{Hertrich-Jeromin}, Udo},
      author={Rossman, Wayne},
       title={Lie geometry of linear {{Weingarten}} surfaces},
        date={2012},
     journal={C. R. Math. Acad. Sci. Paris},
      volume={350},
      number={7-8},
       pages={413\ndash 416},
      review={\MR{2922095}},
       doi = {10.1016/j.crma.2012.03.018},
}

\bib{BP2020}{article}{
      author={Burstall, Francis~E.},
      author={Pember, Mason},
       title={Lie applicable surfaces and curved flats},
        date={2021},
     journal={To appear on Manuscripta Math.},
      eprint={arXiv:2007.11947},
      url = {http://arxiv.org/abs/2007.11947},
}

\bib{BS2012}{article}{
      author={Burstall, Francis~E.},
      author={Santos, Susana~D.},
       title={Special isothermic surfaces of type $d$},
        date={2012},
     journal={J. Lond. Math. Soc. (2)},
      volume={85},
      number={2},
       pages={571–591},
      review={\MR{2901079}},
        doi = {10.1112/jlms/jdr050},
}

\bib{C2008}{book}{
      author={Cecil, Thomas~E.},
       title={Lie sphere geometry},
     edition={Second},
      series={Universitext},
   publisher={{Springer}},
     address={{New York}},
        date={2008},
        ISBN={978-0-387-74655-5},
      review={\MR{2361414}},
      doi={10.1007/978-0-387-74656-2},
}

\bib{CO2017}{article}{
      author={Cho, Joseph},
      author={Ogata, Yuta},
       title={Deformation of minimal surfaces with planar curvature lines},
        date={2017},
     journal={J. Geom.},
      volume={108},
      number={2},
       pages={463\ndash 479},
         doi = {10.1007/s00022-016-0352-0},
         review={\MR{3667234}},
}

\bib{C2012i}{thesis}{
      author={Clarke, Daniel},
       title={Integrability in submanifold geometry},
       organization={University of Bath},
        type={Ph.{{D}}. {{Thesis}}},
        date={2012},
        review={\MR{3389373}},
}

\bib{CFT2003}{article}{
      author={Corro, Armando~V.},
      author={Ferreira, W.},
      author={Tenenblat, Keti},
       title={Ribaucour transformations for constant mean curvature and linear
  {{Weingarten}} surfaces},
        date={2003},
     journal={Pacific J. Math.},
      volume={212},
      number={2},
       pages={265\ndash 296},
      review={\MR{2038049}},
        doi = {10.2140/pjm.2003.212.265},
}

\bib{D1889}{book}{
      author={Darboux, Gaston},
       title={Le\c{c}ons sur la th\'eorie g\'en\'erale des surfaces et les
  applications g\'eom\'etriques du calcul infinit\'estimal, {{Deuxi\`eme}}
  partie},
   publisher={{Gauthier-Villars}},
     address={{Paris}},
        date={1889},
}

\bib{D1896}{book}{
      author={Darboux, Gaston},
       title={Le\c{c}ons sur la th\'eorie g\'en\'erale des surfaces et les
  applications g\'eom\'etriques du calcul infinit\'estimal, {{Quatri\`eme}}
  partie},
   publisher={{Gauthier-Villars}},
     address={{Paris}},
        date={1896},
}

\bib{D1911iii}{article}{
      author={Demoulin, Alphonse},
       title={Sur les surfaces ${\Omega}$},
        date={1911},
     journal={C. R. Acad. Sci. Paris},
      volume={153},
       pages={927–929},
}

\bib{D1911i}{article}{
      author={Demoulin, Alphonse},
       title={Sur les surfaces ${R}$ et les surfaces ${\Omega}$},
        date={1911},
     journal={C. R. Acad. Sci. Paris},
      volume={153},
       pages={590–593},
}

\bib{D1911ii}{article}{
      author={Demoulin, Alphonse},
       title={Sur les surfaces ${R}$ et les surfaces ${\Omega}$},
        date={1911},
     journal={C. R. Acad. Sci. Paris},
      volume={153},
       pages={705–707},
}

\bib{D1887i}{article}{
      author={Dobriner, Hermann},
       title={Die {{Fl\"achen}} constanter {{Kr\"ummung}} mit einem {{System}}
  sph\"arischer {{Kr\"ummungslinien}} dargestellt mit {{Hilfe}} von
  {{Thetafunctionen}} zweier {{Variabeln}}},
        date={1887},
     journal={Acta Math.},
      volume={9},
      number={1},
       pages={73\ndash 104},
      review={\MR{1554710}},
        doi = {10.1007/BF02406731},
}

\bib{D1887ii}{article}{
      author={Dobriner, Hermann},
       title={Die {{Minimalfl\"achen}} mit einem {{System}} sph\"arischer
  {{Kr\"ummungslinien}}},
        date={1887},
     journal={Acta Math.},
      volume={10},
      number={1},
       pages={145\ndash 152},
      review={\MR{1554734}},
        doi = {10.1007/BF02393699},
}

\bib{E1909}{book}{
      author={Eisenhart, Luther~Pfahler},
       title={A treatise on the differential geometry of curves and surfaces},
   publisher={{Ginn and Company}},
     address={{Boston}},
        date={1909},
      review={\MR{0115134}},
}

\bib{E1868}{article}{
      author={Enneper, Alfred},
       title={Analytisch-geometrische {{Untersuchungen}}},
        date={1868},
     journal={Nachr. K\"onigl. Ges. Wiss. Georg-Augusts-Univ. G\"ottingen},
%      volume={1868},
%      number={11},
       pages={258\ndash 276, 421\ndash 433},
}

\bib{E1878}{article}{
      author={Enneper, Alfred},
       title={{Untersuchungen \"uber die Fl\"achen mit planen und sph\"arischen
  Kr\"ummungslinien}},
        date={1878},
     journal={Abh. K\"onigl. Ges. Wissensch. G\"ottingen},
      volume={23},
       pages={1\ndash 96},
}

\bib{E1744}{book}{
      author={Euler, Leonhard},
       title={Methodus inveniendi lineas curvas maximi minimive proprietate
  gaudentes, sive solutio problematis isoperimetrici lattissimo sensu accepti},
   publisher={{Marcum-Michaelem Bousquet}},
     address={{Lausanne \& Geneva}},
        date={1744},
}

\bib{F2000i}{article}{
      author={Ferapontov, Evgeny~V.},
       title={Integrable systems in projective differential geometry},
        date={2000},
     journal={Kyushu J. Math.},
      volume={54},
      number={1},
       pages={183\ndash 215},
      review={\MR{1762804}},
        doi = {10.2206/kyushujm.54.183},
}

\bib{F2000ii}{article}{
      author={Ferapontov, Evgeny~V.},
       title={Lie sphere geometry and integrable systems},
        date={2000},
     journal={Tohoku Math. J. (2)},
      volume={52},
      number={2},
       pages={199\ndash 233},
      review={\MR{1756094}},
        doi = {10.2748/tmj/1178224607},
}

\bib{F2002}{article}{
      author={Ferapontov, Evgeny~V.},
       title={Analog of {{Wilczynski}}'s projective frame in {{Lie}} sphere
  geometry: {{Lie}}-applicable surfaces and commuting {{Schr\"odinger}}
  operators with magnetic fields},
        date={2002},
     journal={Internat. J. Math.},
      volume={13},
      number={9},
       pages={959\ndash 985},
      review={\MR{1936782}},
        doi = {10.1142/S0129167X0200154X},
}

\bib{FGJL2006}{article}{
      author={Ferr{\'a}ndez, Angel},
      author={Guerrero, Julio},
      author={Javaloyes, Miguel~{\'A}ngel},
      author={Lucas, Pascual},
       title={Particles with curvature and torsion in three-dimensional
  pseudo-{{Riemannian}} space forms},
        date={2006},
     journal={J. Geom. Phys.},
      volume={56},
      number={9},
       pages={1666\ndash 1687},
      review={\MR{2240416}},
        doi = {10.1016/j.geomphys.2005.09.004},
}

\bib{H2014}{article}{
      author={Heller, Lynn},
       title={Constrained {{Willmore}} tori and elastic curves in 2-dimensional
  space forms},
        date={2014},
     journal={Comm. Anal. Geom.},
      volume={22},
      number={2},
       pages={343\ndash 369},
      review={\MR{3210758}},
        doi = {10.4310/CAG.2014.v22.n2.a6},
}

\bib{H2003}{book}{
      author={{Hertrich-Jeromin}, Udo},
       title={Introduction to {{M\"obius}} differential geometry},
      series={London {{Mathematical Society Lecture Note Series}}},
   publisher={{Cambridge University Press}},
     address={{Cambridge}},
        date={2003},
      volume={300},
      review={\MR{2004958}},
}

\bib{HPPwip}{unpublished}{
      author={{Hertrich-Jeromin}, Udo},
      author={Pember, Mason},
      author={Polly, Denis},
       title={Channel linear {{Weingarten}} surfaces in space forms},
        note={In preparation},
}

\bib{J2004}{thesis}{
      author={Javaloyes, Miguel~{\'A}ngel},
       title={Sumersiones pseudo-riemannianas y modelos geom\'etricos de
  part\'iculas relativistas},
        type={Tesis {{Doctoral}}},
        date={2004},
        organization={University of Murcia},
}

\bib{LS1984ii}{article}{
      author={Langer, Joel~C.},
      author={Singer, David~A.},
       title={Curves in the hyperbolic plane and mean curvature of tori in
  $3$-space},
        date={1984},
     journal={Bull. London Math. Soc.},
      volume={16},
      number={5},
       pages={531\ndash 534},
      review={\MR{751827}},
        doi = {10.1112/blms/16.5.531},
}

\bib{LS1984i}{article}{
      author={Langer, Joel~C.},
      author={Singer, David~A.},
       title={The total squared curvature of closed curves},
        date={1984},
     journal={J. Differential Geom.},
      volume={20},
      number={1},
       pages={1\ndash 22},
      review={\MR{772124}},
      doi={10.4310/jdg/1214438990},
}

\bib{L1872}{article}{
      author={Lie, Sophus},
       title={Ueber {{Complexe}}, insbesondere {{Linien}}- und
  {{Kugel}}-{{Complexe}}, mit {{Anwendung}} auf die {{Theorie}} partieller
  {{Differential}}-{{Gleichungen}}},
        date={1872},
     journal={Math. Ann.},
      volume={5},
      number={1},
       pages={145\ndash 208},
      review={\MR{1509773}},
      doi = {10.1007/BF01446331},
}

\bib{melko_integrable_1994}{incollection}{
      author={Melko, M.},
      author={Sterling, Ivan},
       title={Integrable systems, harmonic maps and the classical theory of
  surfaces},
        date={1994},
   booktitle={Harmonic maps and integrable systems},
      editor={Fordy, Allan~P.},
      editor={Wood, John~C.},
      series={Aspects {{Math}}., {{E23}}},
   publisher={{Friedr. Vieweg}},
     address={{Braunschweig}},
       pages={129\ndash 144},
      review={\MR{1264184}},
       doi = {10.1007/978-3-663-14092-4_6},
}

\bib{MDBL2018}{article}{
      author={Mesnil, Romain},
      author={Douthe, Cyril},
      author={Baverel, Olivier},
      author={L{\'e}ger, Bruno},
       title={Morphogenesis of surfaces with planar lines of curvature and
  application to architectural design},
        date={2018},
     journal={Automation in Construction},
      volume={95},
       pages={129\ndash 141},
         doi = {10.1016/j.autcon.2018.08.007},
}

\bib{MN1999i}{article}{
      author={Musso, Emilio},
      author={Nicolodi, Lorenzo},
       title={Laguerre geometry of surfaces with plane lines of curvature},
        date={1999},
     journal={Abh. Math. Sem. Univ. Hamburg},
      volume={69},
       pages={123\ndash 138},
      review={\MR{1722926}},
        doi = {10.1007/BF02940867},
}

\bib{MN1999ii}{article}{
      author={Musso, Emilio},
      author={Nicolodi, Lorenzo},
       title={Willmore canal surfaces in {{Euclidean}} space},
        date={1999},
     journal={Rend. Istit. Mat. Univ. Trieste},
      volume={31},
      number={1-2},
       pages={177\ndash 202},
      review={\MR{1763251}},
}

\bib{MN2006}{article}{
      author={Musso, Emilio},
      author={Nicolodi, Lorenzo},
       title={Deformation and applicability of surfaces in {{Lie}} sphere
  geometry},
        date={2006},
     journal={Tohoku Math. J. (2)},
      volume={58},
      number={2},
       pages={161\ndash 187},
      review={\MR{2248428}},
      doi={10.2748/tmj/1156256399},
}

\bib{ogata_ribaucour_nodate}{unpublished}{
      author={Ogata, Yuta},
       title={Ribaucour transformations and their singularities},
        note={Submitted},
}

\bib{P2018}{thesis}{
      author={P{\'a}mpano, {\'A}lvaro},
       title={Invariant surfaces with generalized elastic profile curves},
        type={Tesis {{Doctoral}}},
        organization={University of the Basque Country},
        date={2018},
}

\bib{P2020}{article}{
      author={Pember, Mason},
       title={Lie applicable surfaces},
        date={2020},
     journal={Comm. Anal. Geom.},
      volume={28},
      number={6},
       pages={1407\ndash 1450},
         doi = {10.4310/CAG.2020.v28.n6.a5},
         review={\MR{4184823}},
}

\bib{PS2018}{article}{
      author={Pember, Mason},
      author={Szewieczek, Gudrun},
       title={Channel surfaces in {{Lie}} sphere geometry},
        date={2018},
     journal={Beitr. Algebra Geom.},
      volume={59},
      number={4},
       pages={779\ndash 796},
      review={\MR{3871108}},
        doi = {10.1007/s13366-018-0394-6},
}

\bib{P1985ii}{article}{
      author={Pinkall, Ulrich},
       title={Hopf tori in ${S}^3$},
        date={1985},
     journal={Invent. Math.},
      volume={81},
      number={2},
       pages={379–386},
      review={\MR{799274}},
      doi = {10.1007/BF01389060},
}

\bib{RS2020}{article}{
      author={R{\"o}rig, Thilo},
      author={Szewieczek, Gudrun},
       title={The {{Ribaucour}} families of discrete {{R}}-congruences},
        date={2021},
     journal={To appear in Geom. Dedicata},
      doi={10.1007/s10711-021-00614-1},
}

\bib{T2002}{article}{
      author={Tenenblat, Keti},
       title={On {{Ribaucour}} transformations and applications to linear
  {{Weingarten}} surfaces},
        date={2002},
     journal={An. Acad. Brasil. Ci\^enc.},
      volume={74},
      number={4},
       pages={559\ndash 575},
      review={\MR{1942924}},
        doi = {10.1590/S0001-37652002000400001},
}

\bib{ura_constant_2018}{article}{
      author={Ura, Tatsumasa},
       title={Constant negative {{Gaussian}} curvature tori and their
  singularities},
        date={2018},
     journal={Tsukuba J. Math.},
      volume={42},
      number={1},
       pages={65\ndash 95},
      review={\MR{3873532}},
        doi = {10.21099/tkbjm/1541559651},
}

\bib{W1987}{article}{
      author={Walter, Rolf},
       title={Explicit examples to the {{H}}-problem of {{Heinz Hopf}}},
        date={1987},
     journal={Geom. Dedicata},
      volume={23},
      number={2},
       pages={187\ndash 213},
      review={\MR{892400}},
        doi = {10.1007/BF00181275},
}

\bib{W1986}{article}{
      author={Wente, Henry~C.},
       title={Counterexample to a conjecture of {{H}}. {{Hopf}}},
        date={1986},
     journal={Pacific J. Math.},
      volume={121},
      number={1},
       pages={193\ndash 243},
      review={\MR{815044}},
        doi = {10.2140/pjm.1986.121.193},
}

\bib{Y1987}{article}{
      author={Yamada, Kotaro},
       title={Minimal tori in $S^3$ whose lines of curvature lie in $S^2$},
        date={1987},
     journal={Tokyo J. Math.},
      volume={10},
      number={1},
       pages={215\ndash 226},
      review={\MR{899485}},
        doi = {10.3836/tjm/1270141805},
}

\end{biblist}
\end{bibdiv}

\end{document}